\documentclass[hidelinks,onefignum,onetabnum]{siamart220329}



\usepackage{lipsum,bm,comment}
\usepackage{amsmath,amsfonts,amssymb}
\usepackage{graphicx,subfigure}
\usepackage{epstopdf,float,tikz}
\usepackage{algorithmic}
\usepackage{multirow,adjustbox}
\usepackage{ctable}
\newcommand{\specialcell}[2][c]{\begin{tabular}[#1]{@{}c@{}}#2\end{tabular}}
\ifpdf
  \DeclareGraphicsExtensions{.eps,.pdf,.png,.jpg}
\else
  \DeclareGraphicsExtensions{.eps}
\fi
\graphicspath{{./figures_cv/}}

\newcommand{\roma}{\mathrm{I}}
\newcommand{\romb}{\mathrm{II}}

\newcommand{\mc}[1]{\mathcal{#1}}

\newcommand{\dx}{\,\mathrm{d}x}
\newcommand{\prnt}[1]{\left( #1 \right)}

\newcommand{\norm}[1]{\left\|#1\right\|}

\newcommand{\normL}[2]{\norm{#1}_{L^2\prnt{#2}}}

\newcommand{\normI}[2]{\norm{#1}_{L^\infty\prnt{#2}}}

\newcommand{\Cov}{C_{\mathrm{ov}}}

\newcommand{\Cb}{{{\rm C}}({\rm{D}},{\bm{b}})}

\newcommand{\locv}[3]{{#1}^{#2}_{\mathrm{#3}}}
\newcommand{\PeH}{{\rm{Pe}}_{H,{\bm{b}},\epsilon}}

\newcommand{\noteLi}[1]{{#1}}
\newcommand{\workLi}[1]{{#1}}
\newcommand{\noteLg}[1]{{#1}}
\newcommand{\workLg}[1]{{#1}}
\newsiamremark{remark}{Remark}
\newsiamremark{hypothesis}{Hypothesis}
\crefname{hypothesis}{Hypothesis}{Hypotheses}
\newsiamthm{claim}{Claim}

\headers{WEMSFEM FOR CONVECTION DIFFUSION PROBLEMS}{S. Fu, E. Chung, and G. Li}

\title{Wavelet-based Edge Multiscale Finite Element Methods for Singularly Perturbed Convection-Diffusion Equations\thanks{Submitted to the editors May 3, 2024.
\funding{GL acknowledges the support from Newton International Fellowships Alumni following-on funding awarded by The Royal Society, Young Scientists fund (Project number: 12101520) by NSFC and Early Career Scheme (Project number: 27301921), RGC, Hong Kong. SF's research is supported by startup funding of Eastern institute of technology, Ningbo, NSFC (Project number: 12301514) and Ningbo Yongjiang Talent Programme. The research of EC is partially supported by the Hong Kong RGC General Research Fund (Project numbers: 14305222 and 14304021).}}}

\author{Shubin Fu\thanks{Eastern Institute for Advanced Study, Eastern Institute of Technology, Ningbo, Zhejiang 315200, P. R. China.
  (\email{shubinfu@eias.ac.cn}, \url{http://www.imag.com/\string~ddoe/}).}
\and Eric Chung\thanks{Department of Mathematics, The Chinese University of Hong Kong, Hong Kong Special Administrative Region.  
  (\email{eric.t.chung@gmail.com}, \email{jesmith@fictional.edu}).}
\and Guanglian Li\thanks{Corresponding author. Department of Mathematics, The University of Hong Kong, Pokfulam Road, Hong Kong Special Administrative Region,\email{lotusli@maths.hku.hk}}.}

\usepackage{amsopn}


\ifpdf
\hypersetup{
  pdftitle={An Example Article},
  pdfauthor={D. Doe, P. T. Frank, and J. E. Smith}
}
\fi




\begin{document}

\maketitle

\begin{abstract}
We propose a novel efficient and robust Wavelet-based Edge Multiscale Finite Element Method (WEMsFEM) motivated by \cite{MR3980476,GL18} to solve the singularly perturbed convection-diffusion equations. The main idea is to first establish a local splitting of the solution over a local region by a local bubble part and local Harmonic extension part, and then derive a global splitting by means of Partition of Unity. This facilitates a representation of the solution as a summation of a global bubble part and a global Harmonic extension part, where the first part can be computed locally in parallel. To approximate the second part, we construct an edge multiscale ansatz space locally with hierarchical bases as the local boundary data that has a guaranteed approximation rate both inside and outside of the layers. The key innovation of this proposed WEMsFEM lies in a provable convergence rate with little restriction on the mesh size. Its convergence rate with respect to the computational degree of freedom is rigorously analyzed, which is verified by extensive 2-d and 3-d numerical tests.
\end{abstract}

\begin{keywords}
Multiscale method, Convection-diffusion, Singularly perturbed, Partition of Unity, Hierarchical bases
\end{keywords}

\begin{MSCcodes}
65N30, 65N15, 65N12
\end{MSCcodes}

\section{Introduction}
This paper focuses on solving singularly perturbed convection diffusion equation. Let $D\subset
\mathbb{R}^d$ ($d=1,2,3$) be an open bounded domain with a Lipschitz boundary. Then we seek a function $u\in V:=H^{1}_{0}(D)$ such that
\begin{equation}\label{eq:original}
\begin{aligned}
-\epsilon\Delta u+\bm{b}\cdot\nabla u&=f \quad \text{in } D,\\
u&=0 \quad \text{on } \partial D,
\end{aligned}
\end{equation}
where the force term $f\in L^2(D)$, $0<\epsilon\ll 1$ is a \noteLi{constant} parameter
and the velocity field $\bm{b}\in L^{\infty}(D;\mathbb{R}^d)$. We assume that the velocity field $\bm{b}$ is incompressible, i.e., $\nabla\cdot \bm{b}=0$.
The focus of this paper is on the convection-dominated regime or the singularly perturbed case
with large P\'eclet number $\mathrm{Pe}:=\normI{\bm{b}}{D}/\epsilon$.

Many important applications such as metamaterials, porous media and fluid mechanics involves multiple scales. For example, metamaterials are periodic artificial material with the size of each cell being on a subwavelength scale that possesses desirable properties which are not available for the natural material. Another example arises from singularly perturbed convection-diffusion problem \eqref{eq:original}, whose solution has high oscillation in the form of boundary layer or internal layers. Due to this disparity of scales, the classical numerical treatment becomes prohibitively expensive
and even intractable for many multiscale applications. Nonetheless, motivated by the broad spectrum of practical applications, a large number of multiscale model reduction techniques, e.g., Multiscale Finite Element Methods (MsFEMs),
Heterogeneous Multiscale Methods (HMMs), Variational Multiscale Methods (VMM), flux norm approach, Generalized Multiscale Finite Element Methods (GMsFEMs) and Localized Orthogonal Decomposition (LOD), have been proposed in the literature \cite{MR2721592, chung2023multiscale,MR1979846,egh12,MR1455261,MR1660141, MR3246801} over the last few decades.  \cite{altmann2021numerical,chung2023multiscale,
owhadi2019operator} for recent overviews. They have achieved great success in the efficient and accurate simulation of problems involving multiple scales.

The singularly perturbed convection-diffusion problem \eqref{eq:original} is numerically challenging due to the presence of boundary layers or internal layers with width of $\mathcal{O}(\epsilon)$, which are usually exponential. To obtain a reasonable macroscale solution, one has to resolve all these \workLi{multiple scales hidden in the solution $u$, encoded as the layers or high oscillatory feature,} using a very fine mesh, which is computational extremely infeasible.
To reduce the computational complexity, one category of numerical methods is based upon stabilization, albeit at the loss of accuracy especially near the layers \cite{calo2016multiscale,CHUNG20202336,MR1365381,le2017numerical,
li2017error,MR2454024}. Another category is to incorporate the information of the layers in the form of local multiscale basis functions into the trial space \cite{bonizzoni2022super,CHUNG20202336,park2004multiscale,zhao2023constraint}. These methods usually achieve good accuracy even inside the layers. The objective of this paper is to develop a multiscale method that falls into the second category such that it achieves high accuracy not only outside of the layers, but also near the layers.

In this paper, we develop a novel Wavelet-based Edge Multiscale Finite Element Method (WEMsFEM) \cite{MR3980476, MR4265650,MR4343247,GL18} which relies an edge multiscale ansatz space that has a good approximation property with very low requirement on the solution, and thereby bypass the requirement on a very fine mesh. This edge multiscale ansatz space is constructed in each local domain by solving a homogeneous convection-dominated diffusion equation with hierarchical bases as Dirichlet boundary data and then use partition of unity functions to glue the local basis functions together as global multiscale basis functions. WEMsFEM shares a certain similarity with the Exponentially Convergent Multiscale Finite Element Method (ExpMsFEM) \cite{MR4269488,ExpMsFEM2023,MR4613768} in that both methods utilize the approximation over the coarse skeleton to define the local multiscale basis functions. The most  striking differences are twofold. Firstly, WEMsFEM can be regarded as a variant of the Partition of Unity Finite Element Methods (PUFEM) \cite{melenk1996partition}, which decomposes the global approximation space into summation of local approximation spaces, while ExpMsFEM uses edge localization and coupling to communicate local and global approximations. Secondly, the local multiscale space in WEMsFEM is established by wavelets or hierarchical bases, and it inherits their intrinsic hierarchical structure. Consequently, there is no need for a further model reduction or deriving important modes by means of Singular Value Decomposition (SVD).
\subsection{Main contributions}
\begin{enumerate}
\item The key idea of WEMsFEM is to transfer the approximation properties over the coarse skeleton to its interior by means of the transposition method \cite{MR0350177}. This is rigorously justified for Problem \eqref{eq:original} in \cref{sec:appendix}.

\item To approximate the solution $u$ to Problem \eqref{eq:original}, we construct an edge multiscale ansatz space with level parameter $\ell$ which encodes key microscale information of the solution such that a guaranteed approximation rate is achieved, which is established in \cref{prop:glo-proj}.
\item The converge rate of WEMsFEMs with respect to the level parameter $\ell$ is proved rigorously without demanding grids, see \cref{prop:wavelet-basedconv}.
\item We conduct extensive numerical experiments both in 2-d and 3-d with large P\'eclet number and even with high oscillation in the diffusion coefficients, and all experiments demonstrate fast convergence rate of WEMsFEMs with respect to the level parameter $\ell$ and high accuracy both inside and outside of the layers.
\end{enumerate}
\subsection{Organization}
We summarize in \cref{sec:prelim} the basic results on singularly perturbed convection-diffusion problem \eqref{eq:original}, and demonstrate the main idea of our approach. Then we present the basic notation and approximation properties of hierarchical bases in \cref{sec:wavelets} for the sake of completeness, which is followed by the construction of multiscale ansatz space $V_{\text{ms},\ell}$. Based upon this, we propose in \cref{sec:edge} the novel Wavelet-based Edge Multiscale Finite Element Methods (WEMsFEMs), which are the key findings of this paper.
Their theoretical and numerical performance are presented in \cref{sec:error} and \cref{sec:numer}. Finally, we conclude the paper with several remarks in \cref{sec:conclusion}.

\section{Preliminaries}\label{sec:prelim}
We recap in this section important properties and numerical challenges of Problem \eqref{eq:original}, and its finite element approximation.
\subsection{Weak formulation and {\it a priori } estimate}
We define the bilinear form $a(\cdot,\cdot)$ on $V\times V$ associated to~\eqref{eq:original} by
\begin{align}\label{eq:bilinear-form}
a(u,v):=\epsilon\int_{D}\nabla u\cdot \nabla v\,\mathrm{d}x
    +\int_{D}(\bm{b}\cdot\nabla u)\,v\,\mathrm{d}x
  \qquad\text{for all }u,v\in V.
\end{align}
Since $\nabla\cdot \bm{b}=0$, an integration by parts implies that
the bilinear form $a(\cdot,\cdot)$ is $V$-elliptic, i.e.,
\begin{equation}\label{eqn:v-elliptic}
\begin{aligned}
  a(v,v)&= \epsilon \normL{\nabla v}{D}^2
   \qquad\text{ for all }v\in V.
\end{aligned}
\end{equation}
Nevertheless, this $V$-ellipticity vanishes when $\epsilon\to 0$, which makes the standard finite element method unstable.

An application of the Poincar\`{e}-Friedrichs inequality leads to the boundedness of the bilinear form,
\begin{align*}
|a(u,v)|\leq \Cb\normL{\nabla u}{D}\normL{\nabla v}{D}.
\end{align*}
Here, the positive constant $\Cb$ may depend on the diameter of the domain $D$ and the $L^{\infty}$-norm of the velocity $\bm{b}$.

The weak formulation for problem \eqref{eq:original} is to find $u\in V$ such that
\begin{align}\label{eqn:weakform}
a(u,v)=(f,v)_{D} \quad \text{for all } v\in V.
\end{align}

Furthermore, this weak solution $u$ is H\"{o}lder continuous,
\begin{proposition}[H\"{o}lder estimate]\label{prop:holder-glo}
\noteLg{Let $d\leq 3$. }For some $\alpha\in (0,1)$, there holds
\begin{align}\label{eq:holder}
\|u\|_{C^{0,\alpha}(\noteLi{\overline{D}})}\leq \mathrm{C}(\alpha)\left(\mathrm{Pe}\normL{\nabla u}{D}+ \epsilon^{-1}\normL{f}{D}\right).
\end{align}
Here, the positive constant $\mathrm{C}(\alpha)$ is independent of $\epsilon$.
\end{proposition}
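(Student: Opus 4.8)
The plan is to reinterpret \eqref{eq:original} as a Poisson problem and to push all of the $\epsilon$-dependence into a single $L^2$ bound on the right-hand side, after which a purely geometric, $\epsilon$-free elliptic estimate finishes the argument. Concretely, I would rewrite the equation as $-\Delta u = g$ in $D$ with $u\in H^1_0(D)$, where $g:=\epsilon^{-1}\prnt{f-\bm{b}\cdot\nabla u}$. Since $\bm{b}\in L^{\infty}(D;\mathbb{R}^d)$ and $\nabla u\in L^2(D)$, we have $g\in L^2(D)$, and
\[
\normL{g}{D}\le \epsilon^{-1}\normL{f}{D}+\epsilon^{-1}\normI{\bm{b}}{D}\normL{\nabla u}{D}=\mathrm{Pe}\,\normL{\nabla u}{D}+\epsilon^{-1}\normL{f}{D},
\]
which is exactly the quantity on the right of \eqref{eq:holder}. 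Thus the whole claim reduces to the $\epsilon$-free assertion $\|u\|_{C^{0,\alpha}(\overline{D})}\le \mathrm{C}(\alpha)\normL{g}{D}$ for the homogeneous Dirichlet problem of the Laplacian in dimension $d\le 3$. I prefer this Poisson reduction over applying De Giorgi--Nash--Moser directly to the divergence form $-\DIV(\epsilon\nabla u-\bm{b}u)=f$, because in the latter route the Hölder constant would depend on the size $\mathrm{Pe}$ of the first-order coefficient, obscuring the required $\epsilon$-independence.

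For the reduced estimate I would split $u$ by means of the Newtonian potential. Let $w:=\Phi\ast\prnt{g\,\mathbf{1}_{D}}$ be the convolution of the zero-extension of $g$ with the fundamental solution $\Phi$ of $-\Delta$, so that $-\Delta w=g$ in $D$. Because $d\le 3$ forces $2>\tfrac{d}{2}$, the datum $g\in L^2(D)$ lies in $L^p$ with $p>\tfrac{d}{2}$ on the bounded set $D$, and the classical potential estimate gives $\|w\|_{C^{0,\alpha}(\mathbb{R}^d)}\le C\,\normL{g}{D}$ with $\alpha=2-\tfrac{d}{2}$ (and any $\alpha<1$ admissible when $d\le 2$); this estimate needs no regularity of $\partial D$. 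The remainder $v:=u-w$ is then harmonic in $D$ and satisfies $v=-w$ on $\partial D$ in the trace sense.

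It remains to bound $\|v\|_{C^{0,\alpha}(\overline{D})}$, and this is where the only real difficulty lies. The interior part is automatic: $v$ is smooth inside $D$, and the maximum principle yields $\normI{v}{D}\le \normI{w}{\partial D}\le \|w\|_{C^{0,\alpha}(\mathbb{R}^d)}$. The delicate point, and the main obstacle, is Hölder continuity of the harmonic function $v$ \emph{up to the boundary}, since $D$ is only Lipschitz. I would exploit that a Lipschitz domain satisfies a uniform exterior cone condition, which allows one to build Hölder barriers at each boundary point and thereby transfer the Hölder modulus of the boundary data $-w$ to $v$; this is precisely the step at which the admissible exponent $\alpha$ may have to be lowered, explaining why the statement only claims existence of \emph{some} $\alpha\in(0,1)$ rather than a sharp value. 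Combining the bounds on $w$ and $v$ produces $\|u\|_{C^{0,\alpha}(\overline{D})}\le \mathrm{C}(\alpha)\normL{g}{D}$ with $\mathrm{C}(\alpha)$ depending only on $D$, $d$ and $\alpha$ but not on $\epsilon$, and substituting the bound on $\normL{g}{D}$ from the first paragraph gives \eqref{eq:holder}.
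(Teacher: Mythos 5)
Your proposal is correct, and its skeleton coincides exactly with the paper's proof: both rewrite \eqref{eq:original} as the Poisson problem $-\Delta u = g$ with $g:=\epsilon^{-1}\prnt{f-\bm{b}\cdot\nabla u}$, and both observe that $\normL{g}{D}\le \mathrm{Pe}\,\normL{\nabla u}{D}+\epsilon^{-1}\normL{f}{D}$, so that all $\epsilon$-dependence is absorbed into the data. The divergence is only in the final step: the paper concludes by citing the H\"older estimate for the Dirichlet Laplacian with $L^2$ data on a Lipschitz domain (Remark 6.5 of the Griepentrog--Recke reference) as a black box, whereas you prove that estimate from scratch, splitting $u=w+v$ into a Newtonian potential $w=\Phi\ast(g\,\mathbf{1}_D)$ (where $d\le 3$ guarantees $2>d/2$, hence $\|w\|_{C^{0,2-d/2}}\lesssim \normL{g}{D}$) and a harmonic remainder $v$ handled by the weak maximum principle together with barrier arguments based on the uniform exterior cone property of Lipschitz domains. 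Your route is longer but self-contained, and it makes transparent exactly where the hypotheses enter: the restriction $d\le 3$ is what makes $L^2$ data subcritical for the potential estimate, the Lipschitz boundary is what supplies the cone/barrier construction, and the possible lowering of the exponent in the barrier step is why the statement asserts only \emph{some} $\alpha\in(0,1)$; the paper's citation-based proof is shorter but leaves these mechanisms implicit. The one place where you compress a nontrivial (though classical) argument is the assertion that boundary barrier estimates plus interior estimates for harmonic functions yield a global $C^{0,\alpha}(\overline{D})$ bound with norm controlled by $\|w\|_{C^{0,\alpha}}$; this interpolation between boundary and interior oscillation estimates is standard but would need to be written out or cited (e.g., Gilbarg--Trudinger) in a complete write-up. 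In both arguments the resulting constant depends only on $D$, $d$ and $\alpha$, never on $\epsilon$, which is the point of the proposition.
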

\begin{proof}
Note that Problem \eqref{eq:original} is equivalent to
\begin{align*}
-\Delta u=\epsilon^{-1}\left(-{\bm{b}}\cdot\nabla u+f\right)& \qquad\text{ in } D,&\\
u=0&\qquad \text{ on } \partial D.&
\end{align*}
On the one hand, the right hand side can be bounded by
\begin{align}\label{eq:rhs-sss}
\normL{-\epsilon^{-1}{\bm{b}}\cdot\nabla u+\epsilon^{-1}f}{D}\leq
\mathrm{Pe}\normL{\nabla u}{D}+\epsilon^{-1}\normL{f}{D}.
\end{align}
On the other hand, the H\"{o}lder estimate for Laplacian \cite[Remark 6.5]{griepentrog2001linear} leads to
\begin{align*}
\|u\|_{C^{0,\alpha}(\noteLi{\overline{D}})}\leq \mathrm{C}(\alpha)\normL{-\epsilon^{-1}{\bm{b}}\cdot\nabla u+\epsilon^{-1}f}{D}
\end{align*}
for some constant $\mathrm{C}(\alpha)$ depends only on the domain $D$ and its boundary $\partial D$. Together with \eqref{eq:rhs-sss}, we obtain the desired assertion.
\end{proof}
\subsection{Finite Element discretization}
To discretize problem \eqref{eq:original},
let $\mathcal{T}^H$ be a regular \noteLi{quasi-uniform} partition of the domain $D$ into \noteLi{quadrilaterals (2d) or cubes (3d)} with a mesh size $H$. For ease of presentation, we denote the mesh P\'{e}clet number $\PeH$ of the coarse mesh $\mc{T}^H$ by
\[
\PeH:=H\|{\bm{b}}\|_{L^{\infty}(D)}/\epsilon.
\]
The vertices of $\mathcal{T}^H$
are denoted by $\{O_i\}_{i=1}^{N}$, with $N$ being the total number of coarse nodes.
The coarse neighborhood associated with the node $O_i$ is denoted by
\begin{equation} \label{neighborhood}
\omega_i:=\text{int} \bigcup\{ \overline{K}: K\in\mathcal{T}^H \text{ satisfying } O_i\in \overline{K}\}.
\end{equation}
The overlapping constant $\Cov$ is defined by
\begin{align}\label{eq:overlap}
\Cov:=\max\limits_{K\in \mathcal{T}^{H}}\#\{O_i: K\subset\omega_i \text{ for } i=1,2,\cdots,N\}.
\end{align}
We assume that $\epsilon\ll 1$ is a small parameter, and that  the coarse mesh $\mathcal{T}^H$ fails to resolve the small parameter, i.e., $\PeH\gg 1$.

Over the coarse mesh $\mathcal{T}^H$, let $V_H$ be the conforming piecewise
\noteLg{bilinear} finite element space,
\[
V_H:=\{v\in \mathcal{C}(D): V|_{T}\in \noteLi{\mathbb{Q}_{1}} \text{ for all } T\in \mathcal{T}^H\},
\]
where $\mathbb{Q}_1$ denotes the space of \noteLi{bilinear} polynomials. Then the classical Galerkin approximation of Problem \eqref{eqn:weakform} reads as finding $u_H\in V_H$, satisfying
\begin{align}\label{eqn:weakform_h}
a(u_H,v_H)=(f,v_H)_{D} \quad \text{ for all } v_H\in V_H.
\end{align}
A straightforward calculation leads to the following estimate
\begin{align*}
 \normL{\nabla(u-u_{H})}{D}
&\lesssim(1+\PeH)\min\limits_{w\in  V_{H}}\normL{\nabla(u-w)}{D}.
 \end{align*}
 \noteLg{Then a standard interpolation property implies}
\begin{align}\label{eq:q1element-error}
 \normL{\nabla(u-u_{H})}{D}&\lesssim(1+\PeH)H\|u\|_{H^2(D)}.
 \end{align}
 Here, and throughout this paper, the notation $A \lesssim B$ means $A\leq c B$ for some benign constant that does not depend on the small parameter $\epsilon$ and the mesh size $H$. On each coarse neighborhood $\omega_i$, the local bilinear form $a_{\omega_i}(\cdot,\cdot)$ is defined analogous to \eqref{eq:bilinear-form} by replacing the global domain $D$ with $\omega_i$. 
 \begin{remark}
If additional stabilization in the spirit of the Streamline Upwind Petrov-Galerkin (SUPG) \cite{MR1365381} is adopted in \eqref{eqn:weakform_h}, then the error estimate \eqref{eq:q1element-error} can be improved by replacing $\PeH$ with $\sqrt{\PeH}$. 
\end{remark}
 Even though the hidden constant in the estimate above is independent of the small parameter $\epsilon$, $\|u\|_{H^2(D)}$ relies on $\epsilon$. 
\subsection{Main objective}
The main goal of this paper is to construct a multiscale ansatz space $V_{\text{ms},\ell}$ with a non-negative level parameter $\ell\in\mathbb{N}$ that incorporates certain {\em a priori} information of the exact solution $u$, such that the following quantity
 \begin{align}\label{eq:approximation-goal}
 \min\limits_{w\in  V_{\text{ms},\ell}}\normL{\nabla(u-w)}{D}
 \end{align}
has a certain decay rate with respect to the number of basis functions in $V_{\text{ms},\ell}$.
\section{Edge multiscale ansatz space $V_{\text{ms},\ell}$}
We introduce in this section the methodology for the construction of $V_{\text{ms},\ell}$ to obtain a guaranteed decay rate in \eqref{eq:approximation-goal} both inside and outside of the layers. 
\subsection{Local-Global splitting}
To start, note that the solution $u$ from \eqref{eq:original} satisfies the following equation
\begin{equation*}
\begin{aligned}
\mathcal{L}_i u\noteLi{:=-\epsilon\Delta u+\bm{b}\cdot\nabla u}&=f \quad&&\text{ in } \omega_i,
\end{aligned}
\end{equation*}
which can be split into the summation of two parts, namely
\begin{align}\label{eq:decomp1}
u|_{\omega_i}=u^{i,\roma}+u^{i,\romb}.
\end{align}
Here, the two components $u^{i,\roma}$ and $u^{i,\romb}$ are respectively given by
\begin{equation}\label{eq:u-roma1conv}
\left\{
\begin{aligned}
\mathcal{L}_i u^{i,\roma}&=f \quad&&\text{ in } \omega_i\\
u^{i,\roma}&=0\quad&&\text{ on }\partial \omega_i,
\end{aligned}
\right.
\end{equation}
and
\begin{equation}\label{eq:111}
\left\{
\begin{aligned}
\mathcal{L}_i u^{i,\romb}&=0 \quad&&\text{ in } \omega_i\\
u^{i,\romb}&=u\quad&&\text{ on }\partial \omega_i.
\end{aligned}
\right.
\end{equation}
This implies that $u^{i,\roma}$ contains the local information of the source term $f$, and $u^{i,\romb}$ encodes the global information over the coarse skeleton $\mathcal{E}^H:=\cup\{\partial K: K\in\mathcal{T}^H\}$.
Note that $u^{i,\roma}$ can be solved in each coarse neighborhood $\omega_i$ for $i=1,2,\cdots,N$ in parallel. \workLi{In practice, it can be solved by the stabilized methods such as the SUPG with sufficient accuracy.}

Next, we introduce a partition of unity $\{\chi_i\}_{i=1}^N$ subordinate to the cover $\{\omega_i\}_{i=1}^N$ to induce a global decomposition from \eqref{eq:decomp1}.
Recall that $O_i$ denotes the ith coarse node in the coarse mesh $\mathcal{T}^H$. We assume this Partition of Unity $\{\chi_i\}_{i=1}^N$ satisfies the following properties:
\begin{equation*}
\left\{
\begin{aligned}
{\text{supp}(\chi_i)}&\subset\omega_i\\
\sum\limits_{i=1}^{N}\chi_i&=1 \text{ in } D\\
 \| \chi_i\|_{L^{\infty}(\omega_i)}&\leq C_{\infty}\\
\|\nabla \chi_i\|_{L^{\infty}(\omega_i)}&\leq C_{\text{G}}H^{-1}
\end{aligned}
\right.
\end{equation*}
for some positive constants $C_{\infty}$ and $ C_{\text{G}}$, which is named the $(\Cov,C_{\infty},C_{\text{G}})$ partition of unity \cite{melenk1996partition}.
For example, the standard bilinear basis functions on the coarse mesh $\mathcal{T}^{H}$ are the partition of unity function with $C_{\infty}=C_{\text{G}}=1$, \noteLi{which are utilised in our numerical tests.} Note that there are many other alternatives for the partition of unity functions besides using this bilinear basis functions. For instance, one can utilize the flat top type of partition of unity functions, cf. \cite{Griebel.Schweitzer:2000}.

Consequently, we can represent $u$ as a summation of local parts,
\begin{align*}
u=\left(\sum_{i=1}^{N}\chi_i\right) u=\sum_{i=1}^{N}\left(\chi_i u|_{\omega_i}\right).
\end{align*}
Inserting the local splitting \eqref{eq:decomp1}, we derive
\begin{align}
u=\sum_{i=1}^{N}\chi_i \left(u^{i,\roma}+u^{i,\romb}\right)
&=\sum_{i=1}^{N}\chi_iu^{i,\roma}+\sum_{i=1}^{N}\chi_iu^{i,\romb}\nonumber\\
&:=u^{\roma}+u^{\romb}.\label{eq:glo-decomp}
\end{align}
Here, the first term
\begin{align}\label{eq:bubble-glo}
u^{\roma}:=\sum_{i=1}^N\chi_i u^{i,\roma}
\end{align}
denotes the global bubble part which contains information of the source term $f$, and the second term
 \begin{align}\label{eq:harmonic-glo}
u^{\romb}:=\sum_{i=1}^N\chi_i u^{i,\romb}.
\end{align}
is the global Harmonic extension part that encodes global information over the coarse skeleton $\mathcal{E}^{H}$.

Since the global bubble part $u^{\roma}$ can be retrieved locally, our main objective \eqref{eq:approximation-goal} is reduced to seeking a good approximative space $V_{\text{ms},\ell}$ for $u^{\romb}$ in the sense that the following quantity has a certain decay rate,
\begin{align}\label{eq:approximation-goal-h}
 \min\limits_{w\in  V_{\text{ms},\ell}}\normL{\nabla(u^{\romb}-w)}{D}.
 \end{align}
To obtain this approximative space $V_{\text{ms},\ell}$, we aim to design global multiscale basis functions taking the same form as \eqref{eq:harmonic-glo}. Since the partition of unity $\{\chi_i\}_{i=1}^N$ is known, then we only need to design local multiscale basis functions to approximate $u^{i,\romb}$ \eqref{eq:111}. Note that $u^{i,\romb}$ is completely determined by the boundary data $u|_{\partial\omega_i}$, hence the question is reduced to seeking a good approximation to $u|_{\partial\omega_i}$. Note also that if we can obtain a proper approximation to $u|_{\partial\omega_i}$, this approximation property can transfer into the local neighborhood $\omega_i$ thanks to the {\em a priori} estimate established in \cref{sec:appendix}.  

In short, we need to approximate $u|_{\partial\omega_i}$, which, however, has restrictive regularity as stated in \cref{prop:holder-glo}. This motivates us to introduce hierarchical bases, since their approximation properties do not rely on the regularity of the target function. 
We present in \cref{sec:wavelets} the basic definition of hierarchical bases with level parameter $\ell\in\mathbb{N}^+$ on a unit interval, and then define the approximative space $V_{\text{ms},\ell}$ in \cref{subsec:edge-ansatz}.
\subsection{Hierarchical subspace splitting over $I=:[0,1]$}\label{sec:wavelets}
In this section, we introduce the hierarchical bases on the unit interval $I:=[0,1]$, which facilitate hierarchically splitting the space $L^2(I)$ \cite{MR1162107}.
\begin{figure}[htbp]
	\centering
	\def\PointRadius{0.05}
	\tikzset{
		cross/.pic = {
			\draw[rotate = 45] (-#1,0) -- (#1,0);
			\draw[rotate = 45] (0,-#1) -- (0, #1);
		}
	}
	\begin{tikzpicture}[scale=1]
		\draw[gray] (0, 3) -- ++(4, 0);
		\draw[gray] (0, 3) -- ++(4, 2);
		\draw[gray] (4, 3) -- ++(-4, 2);
		\fill[black]  (0, 3) circle (\PointRadius);
		\fill[black]  (4, 3) circle (\PointRadius);
		\node[below] at (0, 3) {\large$ x_{0, 0}$};
		\node[below] at (4, 3) {\large$ x_{0, 1}$};
		
		\draw[gray] (0, 0) -- ++(4, 0);
		\draw[gray] (0, 0) -- ++(2, 2);
		\draw[gray] (4, 0) -- ++(-2, 2);
		\fill[black]  (0, 0) circle (\PointRadius);
		\fill[black]  (4, 0) circle (\PointRadius);
		\path (2,.0) pic[black] {cross=\PointRadius};
		\node[below] at (0, 0) {\large$x_{1, 0}$};
		\node[below] at (2, 0) {\large$x_{1, 1}$};
		\node[below] at (4, 0) {\large$x_{1, 2}$};
		
		\draw[gray] (0, -3) -- ++(4, 0);
		\draw[gray] (0, -3) -- ++(1, 2);
		\draw[gray] (2, -3) -- ++(-1, 2);
		\draw[gray] (2, -3) -- ++(1, 2);
		\draw[gray] (4, -3) -- ++(-1, 2);
		\fill[black]  (0, -3) circle (\PointRadius);
		\fill[black]  (4, -3) circle (\PointRadius);
		\path (2,.-3) pic[black] {cross=\PointRadius};
		\fill[black] (1, -3) circle ({0.5 * \PointRadius});
		\fill[black] (3, -3) circle ({0.5 * \PointRadius});
		\node[below] at (0, -3) {\large$x_{2, 0}$};
		\node[below] at (2, -3) {\large$x_{2, 2}$};
		\node[below] at (4, -3) {\large$x_{2, 4}$};
		\node[below] at (1, -3) {\large$x_{2, 1}$};
		\node[below] at (3, -3) {\large$x_{2, 3}$};
		
		\node at (5, 3+1) {\Large$W_0$};
		\node at (5,  0+1) {\Large$W_1$};
		\node at (5, -3+1) {\Large$W_2$};
		\node at (5, 1.5+1) {\Large$\oplus$};
		\node at (5, -1.5+1) {\Large$\oplus$};

		\draw[gray] (7, 0) -- ++(4, 0);
		\draw[gray] (7, 0) -- ++(2, 2);
		\draw[gray] (9, 0) -- ++(-2, 2);
		\draw[gray] (9, 2) -- ++(2, -2);
		\draw[gray] (7+2, 0) -- ++(2, 2);		
		\fill[black]  (5+2, 0) circle (\PointRadius);
		\fill[black]  (7+2, 0) circle (\PointRadius);
		\fill[black]  (9+2, 0) circle (\PointRadius);
		\node[below] at (5+2, 0) {\large$x_{1, 0}$};
		\node[below] at (7+2, 0) {\large$x_{1, 1}$};
		\node[below] at (9+2, 0) {\large$x_{1, 2}$};
		
		\draw[gray] (0+7, -3) -- ++(4, 0);
		\draw[gray] (0+7, -3) -- ++(1, 2);
		\draw[gray] (8, -3) -- ++(-1, 2);
		\draw[gray] (10, -3) -- ++(-1, 2);
		\draw[gray] (8, -3) -- ++(1, 2);
		\draw[gray] (10, -3) -- ++(1, 2);
		\draw[gray] (2+7, -3) -- ++(-1, 2);
		\draw[gray] (2+7, -3) -- ++(1, 2);
		\draw[gray] (4+7, -3) -- ++(-1, 2);
		\fill[black]  (0+5+2, -3) circle (\PointRadius);
		\fill[black]  (2+5+2, -3) circle (\PointRadius);
		\fill[black]  (6+2, -3) circle (\PointRadius);	
		\fill[black] (10, -3) circle ({1 * \PointRadius});
		\fill[black] (11, -3) circle ({1 * \PointRadius});
		\node[below] at (7, -3) {\large$x_{2, 0}$};
		\node[below] at (1+5+2, -3) {\large$x_{2, 1}$};
		\node[below] at (2+5+2, -3) {\large$x_{2, 2}$};
		\node[below] at (10, -3) {\large$x_{2, 3}$};
		\node[below] at (11, -3) {\large$x_{2, 4}$};

		\node at (12,  0+1) {\Large$V_1$};
		\node at (12, -3+1) {\Large$V_2$};
		
	\end{tikzpicture}
\caption{An illustration of hierarchical bases for $\ell=0,1,2$.}
\end{figure}

Let the level parameter and the mesh size be $\ell$ and $h_{\ell}:=2^{-\ell}$ with $\ell\in \mathbb{N}$, respectively. Then the grid points on level $\ell$ are
\[
x_{\ell,j}=j\times h_{\ell},\qquad 0\leq j\leq 2^{\ell}.
\]

We can define the basis functions on level $\ell$ by
\begin{equation*}
\psi_{\ell,j}(x)=
\left\{
\begin{aligned}
&1-|x/h_{\ell}-j|, &&\text{ if }  x\in [(j-1)h_{\ell},(j+1)h_{\ell}]\cap [0,1],\\
&0, &&\text{ otherwise.}
\end{aligned}
\right.
\end{equation*}
Define the set on each level $\ell$ by
\begin{equation*}
B_{\ell}:=\Bigg\{
j\in\mathbb{N}\Bigg|
\begin{aligned}
&j=1,\cdots,2^{\ell}-1, j \,\rm{ is\, odd }, &&\rm{if }\,\ell>0\\
&j=0,1,&&\rm{ if }\,\ell=0
\end{aligned}
\Bigg\}.
\end{equation*}
The subspace of level $\ell$ is
\[
W_{\ell}:=\text{span}\{\psi_{\ell,j}:\quad j\in B_{\ell}\}.
\]
We denote $V_{\ell}$ as the subspace in $L^2(I)$ up to level $\ell$, which is defined by the direct sum of subspaces
\[
V_{\ell}:=\oplus_{m\leq\ell}W_{m}.
\]
Consequently, this yields the hierarchical structure of the subspace $V_{\ell}$, namely,
\[
V_{0}\subset V_{1}\subset \cdots\subset V_{\ell}\subset V_{\ell+1}\cdots
\]
Furthermore, the following hierarchical decomposition of the space $L^2(I)$ holds
\[
L^2(I)=\lim_{\ell\to\infty}\oplus_{m\leq\ell}W_{m}.
\]


Note that one can derive the hierarchical decomposition of the space \workLg{$L^2(I^{d-1})$ for $d=2,3$ by means of tensor product, which is denoted as $V_{\ell}^{\otimes^{d-1}}$. Note further that we will use the subspace $V_{\ell}^{\otimes^{d-1}}$ to approximate the restriction of the exact solution $u$ on the coarse skeleton $\partial\mathcal{T}^H:=\cup_{T\in \mathcal{T}^H}\partial T$.}

Next, we present the approximation properties of the hierarchical space $V_{\ell}$, which can be derived from standard $L^2$-projection error \cite{MR520174} combining with the interpolation method.


\begin{proposition}[Approximation properties of the hierarchical space $V_{\ell}$]\label{prop:approx-wavelets}
Let $d=2,3$, $s>0$ and let $\mathcal{I}_{\ell}: L^2(I^{d-1})\to V_{\ell}^{\otimes^{d-1}}$ be $L^2$-projection for each level $\ell\geq 0$, then there holds,
\begin{align}\label{eq:approx-wavelets}
\|v-\mathcal{I}_{\ell}v\|_{L^2(I^{d-1})}&\lesssim 2^{-s\ell}|v|_{H^s(I^{d-1})}\quad\text{for all }v\in H^s(I^{d-1}).
\end{align}
Here, $|\cdot|_{H^s(I^{d-1})}$ denotes the Gagliardo seminorm in the fractional Sobolev space (or Slobodeskii space) $H^s(I^{d-1})$, given by
\begin{align*}
|v|^2_{H^s(I^{d-1})}:=\int_{I^{d-1}}\int_{I^{d-1}}\frac{|v(x)-v(y)|^2}{|x-y|^{d-1+2s}}\mathrm{d}x\mathrm{d}y.
\end{align*}
The corresponding full norm is denoted as $\|\cdot\|_{H^s(I^{d-1})}$.
\end{proposition}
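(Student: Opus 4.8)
The plan is to deduce the fractional estimate by real interpolation from the two integer-order endpoint cases $s=0$ and $s=1$, and then to replace the full $H^s$ norm produced by interpolation with the Gagliardo seminorm, exploiting that $\mathcal{I}_\ell$ reproduces constants. I work directly in $I^{d-1}$, so that the tensor-product structure only needs to be invoked at the integer endpoints, where the Sobolev seminorm splits cleanly into directional contributions. For the endpoints: since $\mathcal{I}_\ell$ is the $L^2(I^{d-1})$-orthogonal projection onto $V_\ell^{\otimes^{d-1}}$, it is a contraction, so $\|v-\mathcal{I}_\ell v\|_{L^2(I^{d-1})}\le\|v\|_{L^2(I^{d-1})}$, which is the case $s=0$. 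For $s=1$ I would use the factorization $\mathcal{I}_\ell=\mathcal{I}_\ell^{(1)}\otimes\cdots\otimes\mathcal{I}_\ell^{(d-1)}$ together with the telescoping identity $\mathrm{Id}-\bigotimes_{k}\mathcal{I}_\ell^{(k)}=\sum_{k}\bigl(\bigotimes_{j<k}\mathcal{I}_\ell^{(j)}\bigr)\otimes(\mathrm{Id}-\mathcal{I}_\ell^{(k)})\otimes\bigl(\bigotimes_{j>k}\mathrm{Id}\bigr)$; applying Fubini, the contraction property in the already-projected directions, and the one-dimensional $L^2$-projection estimate \cite{MR520174} slicewise, each summand is bounded by $2^{-\ell}\|\partial_{x_k}v\|_{L^2(I^{d-1})}$, whence $\|v-\mathcal{I}_\ell v\|_{L^2(I^{d-1})}\lesssim 2^{-\ell}|v|_{H^1(I^{d-1})}$.

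For the interpolation step, set $E_\ell:=\mathrm{Id}-\mathcal{I}_\ell$, which is linear and bounded with $\|E_\ell\|_{L^2\to L^2}\le 1$ and $\|E_\ell\|_{H^1\to L^2}\lesssim 2^{-\ell}$. Since $H^s(I^{d-1})=[L^2(I^{d-1}),H^1(I^{d-1})]_{s,2}$ with equivalent norms for $s\in(0,1)$, the exact interpolation functor gives $\|E_\ell\|_{H^s\to L^2}\lesssim 1^{1-s}(2^{-\ell})^s=2^{-s\ell}$, i.e. the asserted bound with the full norm $\|v\|_{H^s(I^{d-1})}$ on the right-hand side.

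To pass from the full norm to the seminorm, I would use that $V_\ell^{\otimes^{d-1}}$ contains the constants, so $\mathcal{I}_\ell$ reproduces them and $E_\ell c=0$ for every constant $c$. Taking $c=\bar v:=|I^{d-1}|^{-1}\int_{I^{d-1}}v$ and applying the previous bound to $v-\bar v$ gives $\|v-\mathcal{I}_\ell v\|_{L^2}=\|E_\ell(v-\bar v)\|_{L^2}\lesssim 2^{-s\ell}\|v-\bar v\|_{H^s}$. Then $\|v-\bar v\|_{H^s}^2=\|v-\bar v\|_{L^2}^2+|v|_{H^s}^2$ combined with the fractional Poincar\'e--Wirtinger inequality $\|v-\bar v\|_{L^2}\lesssim|v|_{H^s}$ on the bounded connected domain $I^{d-1}$ yields $\|v-\bar v\|_{H^s}\lesssim|v|_{H^s}$, completing the proof for $s\in(0,1)$.

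The routine parts are the endpoint and interpolation steps; the genuine content sits in two places. First, the interpolation rests on the norm-equivalence $H^s=[L^2,H^1]_{s,2}$ between the abstract $K$-functional norm and the concrete Gagliardo seminorm norm, which is exactly where the ``interpolation method'' does its work, and I regard verifying this identification as the main obstacle. Second, the seminorm upgrade requires the fractional Poincar\'e inequality, whose constant depends on $s$ and on $I^{d-1}$ but is harmless for fixed $s$; this is what legitimately turns the full norm into the seminorm. For regularities $s\in(1,2]$ one repeats the scheme on the pair $(H^1,H^2)$, using the order-two $L^2$-projection estimate and reproduction of affine functions in the upgrade step; beyond $s=2$ the piecewise-multilinear projection saturates, so the effective range is $0<s\le 2$, and in the present application only $s\in(0,1)$ is needed in view of the H\"older regularity of $u|_{\partial\omega_i}$ from \cref{prop:holder-glo}.
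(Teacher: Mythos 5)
Your proposal is correct and follows precisely the route the paper itself takes: the paper gives no detailed proof of \cref{prop:approx-wavelets}, stating only that it follows from the standard $L^2$-projection error of \cite{MR520174} combined with the interpolation method, which is exactly your endpoint-plus-real-interpolation argument. The extra details you supply --- the tensor-product telescoping for the $H^1$ endpoint, the reproduction-of-constants and fractional Poincar\'e step that upgrades the full $H^s$ norm to the Gagliardo seminorm, and the observation that the estimate is effectively meaningful only for $s\in(0,1)$ (the Gagliardo seminorm as written degenerates for $s\ge 1$, and the piecewise-multilinear projection saturates at order two) --- are all sound and simply flesh out what the paper leaves implicit.
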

\subsection{Edge multiscale ansatz space}\label{subsec:edge-ansatz}
Now we are ready to define the approximative space $V_{\text{ms},\ell}$. \workLg{We will illustrate it on the case with $d=2$ for the sake of simplicity, even though our methodology can be extended to $d=3$ naturally.}

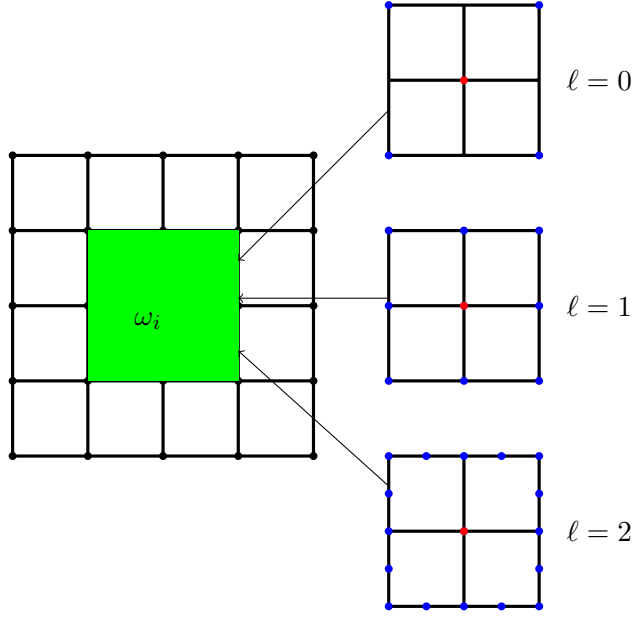
\begin{figure}[htbp]
	\centering
	\begin{tikzpicture}[scale=1]
		\draw[step=1.0, black, very thick] (-0, -0) grid (4, 4);
		\foreach \x in {0,...,4}
		\foreach \y in {0,...,4}{
			\fill (1.0 * \x, 1.0 * \y) circle (1.5pt);
		}
		\fill [red] (2.0 , 2.0 ) circle (1.5pt);
		\fill[green, opacity=0.4] (1.0, 1.0) rectangle (3.0, 3.0);
		\node at (1.8, 1.8) {$\omega_i$};
		
		\draw[step=1.0, black, very thick] (5,4) grid (7, 6);
		\foreach \x in {5,7}
		\foreach \y in {4,6}{
			\fill [blue] (1.0 * \x, 1.0 * \y) circle (1.5pt);
		}
		
		\draw[step=1.0, black, very thick] (5,1) grid (7, 3);
		
		\foreach \x in {5,6,7}
		\foreach \y in {1,2,3}{
			\fill [blue] (1.0 * \x, 1.0 * \y) circle (1.5pt);
		}

		\draw[step=1.0, black, very thick] (5,-2) grid (7, 0);
		
		\foreach \x in {5,6,7}
		\foreach \y in {-2,-1,0}{
			\fill [blue] (1.0 * \x, 1.0 * \y) circle (1.5pt);	
		}
		
		\foreach \x in {5.5,6.5}
		\foreach \y in {-2,0}{
			\fill [blue] (1.0 * \x, 1.0 * \y) circle (1.5pt);		
		}
		
		\foreach \x in {5,7}
		\foreach \y in {-1.5,-0.5}{
			\fill [blue] (1.0 * \x, 1.0 * \y) circle (1.5pt);
		}
		
		\fill [red] (6,-1) circle (1.5pt);
		\fill [red] (6,2) circle (1.5pt);
		\fill [red] (6,5) circle (1.5pt);
		
		\draw [-to](5,4.6) -- (3.0,2.6);
		\draw [-to](5,2.1) -- (3.0,2.1);
		\draw [-to](5,-0.4) -- (3.0,1.4);
		
		\node at (7.8, -1) {$\ell=2$};
		\node at (7.8, 2) {$\ell=1$};
		\node at (7.8, 5) {$\ell=0$};
	\end{tikzpicture}
	\caption{Grid points for $\ell=0,1,2$ over $\partial\omega_i$.}
\label{fig:grid-points-local}
\end{figure}
To begin, we first define the linear space over the boundary of each coarse neighborhood $\partial\omega_i$. Let the level parameter $\ell\in \mathbb{N}$ be fixed, and let $\Gamma_{i}^k$ with $k=1,2,3,4$ be a partition of $\partial\omega_i$ with no mutual intersection, i.e., $\cup_{k=1}^{4}\overline{\Gamma_{i}^k}=\partial\omega_i$ and $\Gamma_{i}^k\cap \Gamma_{i}^{k'}=\emptyset$ if $k\neq k'$. Furthermore,
we denote $V_{i,\ell}^k\subset \noteLi{C(\partial\omega_{i})}$ as the linear space spanned by hierarchical bases up to level $\ell$ on each coarse edge $\Gamma_{i}^{k}$ \noteLi{and continuous over $\partial\omega_i$}, then the local edge space $V_{i,\ell}$ defined over $\partial\omega_i$ is the smallest linear space having $V_{i,\ell}^k$ as a subspace. Let $\{\psi_{i,\ell}^j\}_{j=1}^{2^{\ell+2}}$ and 
$\{x_{i,\ell}^j\}_{j=1}^{2^{\ell+2}}$ be the nodal basis functions and the associated nodal points for $V_{i,\ell}$, then we can represent the local edge space $V_{i,\ell}$ by  
\begin{align}\label{eq:local-edge}
V_{i,\ell} := \text{span} \left\{\psi_{i,\ell}^j : 1 \leq j \leq  2^{\ell+2}\right\}.
\end{align}
We depict the grid points $\{x_{i,\ell}^j\}_{j=1}^{2^{\ell+2}}$ of the hierarchical bases over $\partial\omega_i$ in Figure \ref{fig:grid-points-local} for level parameter $\ell$ ranging from 0 to 2. 

Next, we introduce the local multiscale space over each coarse neighborhood $\omega_i$, which is defined by 
\begin{align}\label{eq:local-multiscale}
\mathcal{L}^{-1}_i(V_{i,\ell})
:= \text{span} \left\{\mathcal{L}^{-1}_i(\psi_{i,\ell}^j) : 1 \leq j \leq  2^{\ell+2}\right\}.
\end{align}
Here, $\mathcal{L}^{-1}_i (\psi_{i,\ell}^j):=v\in H^1(\omega_i)$ \noteLi{is the solution to the following local problem}, 
\begin{equation}
\label{eq:Li}
  \left\{ \begin{aligned}
          \mathcal{L}_i v&:=-\epsilon\Delta v+\bm{b}\cdot \nabla v=0\quad \mbox{in }\omega_i,\\
          v&=\psi_{i,\ell}^j\quad \mbox{on }\partial\omega_i.
  \end{aligned}\right.
\end{equation}
By this construction, the dimension of $\mathcal{L}^{-1}_i (V_{i,\ell})$ equals $2^{\ell+2}$. In practice, a number of $2^{\ell+2}$ local problems \eqref{eq:Li} is solved by the stabilized methods such as the SUPG with sufficient accuracy to obtain the local multiscale space $\mathcal{L}^{-1}_i(V_{i,\ell})$, which can be solved in parallel and thus has a much lower computational complexity than the original problem \eqref{eq:original}. 

Finally, the edge multiscale ansatz space is defined by the Partition of Unity $\{\chi_i\}_{i=1}^N$,
\begin{align}\label{eq:global-multiscale}
V_{\text{ms},\ell} := \text{span} \left\{\chi_i\mathcal{L}^{-1}_i(\psi_{i,\ell}^j) : \,  \, 1 \leq i \leq N \text{ and } 1 \leq j \leq  2^{\ell+2}\right\}.
\end{align}
\section{Wavelet-based Edge Multiscale Finte Element Method}\label{sec:edge}
We present in this section the Wavelet-based Edge Multiscale Finte Element Method (WEMsFEM) to solve \eqref{eq:original} based upon the global splitting \eqref{eq:glo-decomp}. 


To get a good approximation to $u^{\romb}$, we solve for $u_{\text{ms},\ell}^{\romb}\in V_{\text{ms},\ell}$, s.t.,
\begin{align}\label{eqn:weakform_h-romb}
a\left(u_{\text{ms},\ell}^{\romb},v_{\text{ms},\ell}\right)=(f,v_{\text{ms},\ell})_{D}- a( u^{\roma},v_{\text{ms},\ell})
 \quad \text{ for all } v_{\text{ms},\ell}\in V_{\text{ms},\ell}.
\end{align}
Then the multiscale solution $u_{\text{ms},\ell}$ is defined as the summation of these two parts,
\begin{align}\label{eq:u-ms}
u_{\text{ms},\ell}:=u^{\roma}+u_{\text{ms},\ell}^{\romb}.
\end{align}
Note that the proposed edge multiscale ansatz space $V_{\text{ms},\ell}$ is independent of the source term $f$, and hence one can approximate $u$ by \eqref{eq:u-ms} using the same edge multiscale ansatz space $V_{\text{ms},\ell}$ for different source term $f$, which is encountered frequently in many applications such as the uncertainty quantification.

Our main algorithm is summarized in \cref{algorithm:wavelet}.

\begin{remark}[Computational complexity for \cref{algorithm:wavelet}]
\cref{algorithm:wavelet} involves the computation of the global bubble part $u^{\roma}$ and the edge multiscale ansatz space $V_{\text{ms},\ell}$. The former requires solving the local problem \eqref{eq:u-roma1conv} and the latter requires solving the local problem \eqref{eq:Li}. In total, there is $\mathcal{O}(H^{-d}(1+2^{\ell+d}))$ number of local solvers needed for a d-dimension problem, which can be performed in parallel. Note that the dimension of the edge multiscale ansatz space $V_{\text{ms},\ell}$ grows exponentially as the level parameter $\ell$ increases, which implies that the computational complexity to solve \eqref{eqn:weakform_h-romb} grows exponentially with respect to $\ell$. Nevertheless, we observe from the numerical experiments that $\ell=2$ is sufficient for all cases. Moreover, the computational degree of freedom is restricted to the coarse skeleton $\partial\mathcal{T}^H$, which is much smaller than the degree of freedom in $V_h$. 
\end{remark}

\begin{remark}[Local decomposition \eqref{eq:decomp1}]
The local decomposition \eqref{eq:decomp1} is different from the one constructed for the elliptic problems with heterogeneous coefficients \cite{MR3980476} in that we have to solve $u^{i,\roma}$ in our current work. In the previous work \cite{MR3980476}, $u^{\roma}$ is thrown away since its energy norm is $\mathcal{O}(H)$. However, $u^{\roma}$ is not negligible in this work because of the estimate \eqref{eq:local-part1} to be presented in Section \ref{sec:error}, arising from the presence of small parameter $\epsilon$. 
\end{remark}

\begin{algorithm}[htbp]
\caption{Wavelet-based Edge Multiscale Finte Element Method (WEMsFEM)}
\label{algorithm:wavelet}
\begin{algorithmic}[1]
    \REQUIRE The level parameter $\ell\in \mathbb{N}$; coarse neighborhood $\omega_i$ and its four coarse edges $\Gamma_{i}^{k}$ with
    $k=1,2,3,4$;
    the subspace $V_{i,\ell}^k\subset L^2(\Gamma_{i}^{k})$ up to level $\ell$ on each coarse edge $\Gamma_{i}^{k}$.
\ENSURE $u_{\text{ms},\ell}$
\STATE Construct the local edge space $V_{i,\ell}$ \eqref{eq:local-edge}   
\STATE Calculate the local multiscale space $\mathcal{L}^{-1}_i (V_{i,\ell})$ \eqref{eq:local-multiscale}
\STATE Construct the global multiscale space $V_{\text{ms},\ell}$ \eqref{eq:global-multiscale}
 \STATE Solve for the local bubble function $u^{i,\roma}$ from \eqref{eq:u-roma1conv} for $i=1,\cdots,N$
\STATE Calculate the global bubble function $u^{\roma}$ from \eqref{eq:bubble-glo}
\STATE Solve for $u_{\text{ms},\ell}^{\romb}\in V_{\text{ms},\ell}$ from \noteLi{\eqref{eqn:weakform_h-romb}}
\STATE Obtain $u_{\text{ms},\ell}$ from \eqref{eq:u-ms}.
\end{algorithmic}
\end{algorithm}
To analyze the convergence of Algorithm \ref{algorithm:wavelet}, we next introduce the global projection operator $\mathcal{P}_{\ell}$ of level $\ell$: $V\to V_{\text{ms},\ell}$. Since the edge multiscale ansatz space $V_{\text{ms},\ell}$ is generated by the local multiscale space $\mathcal{L}^{-1}(V_{i,\ell})$ by means of the partition of unity \eqref{eq:global-multiscale}, we only need to define local projection operator $\mathcal{P}_{i,\ell}$ of level $\ell$: $L^2(\partial\omega_i)\to \mathcal{L}^{-1}(V_{i,\ell})$ by 
\begin{align}\label{eq:projectionEDGE}
\mathcal{P}_{i,\ell}v:=
\mathcal{L}_{i}^{-1}(\mathcal{I}_{i,\ell}v).
\end{align}
Here, $\mathcal{I}_{i,\ell}:L^2(\partial\omega_i)\to V_{i,\ell}$ denotes the $L^2$-projection.

Note that $\mathcal{P}_{i,\ell}v|_{\partial\omega_i}$ is $L^2$-projection onto $V_{i,\ell}$, this is because 
\begin{align*}
\mathcal{P}_{i,\ell}v|_{\partial\omega_i}
&=\mathcal{I}_{i,\ell}v.
\end{align*}
Note that any $v\in V$ can be expressed by
\begin{align*}
v=\sum_{i=1}^N \chi_i v|_{\omega_i}, 
\end{align*}
then the global interpolation $\mathcal{P}_{\ell}$ of level $\ell$: $ V\to V_{\text{ms},\ell}$ can be defined by means of the local projection \eqref{eq:projectionEDGE},
\begin{align}\label{eq:glo-proj}
\mathcal{P}_{\ell}(v):=\sum_{i=1}^N\chi_i (\mathcal{P}_{i,\ell}(v|_{\partial\omega_i})).
\end{align}
\section{Error estimate}\label{sec:error}
We present in this section the convergence analysis for Algorithm \ref{algorithm:wavelet}, which is divided into two steps. First, we derive the approximation properties of the edge multiscale ansatz space $V_{\text{ms},\ell}$ defined in \eqref{eq:global-multiscale}, which is based upon establishing the local approximation properties of the local multiscale space $\mathcal{L}^{-1}_i(V_{i,\ell})$ in each coarse neighborhood $\omega_i$. Second, we provide a quasi-optimal result by C\'{e}a type of estimate in Theorem \ref{prop:wavelet-basedconv}.

Recall that $\{\chi_i\}_{i=1}^N$ is a $(\Cov,C_{\infty},C_{\text{G}})$ partition of unity subordinate to the cover $\{\omega_i\}_{i=1}^N$, then a slight modification of {\cite[Theorem 2.1]{melenk1996partition}} leads to the following result,
\begin{theorem}[Local-global approximation]
\label{thm:pum}
Assume that $D\subset\mathbb{R}^d$ for $d=2,3$. Let $v_i\in H^1(\omega_i)$ for $i=1,\cdots,N$, and let
\begin{align*}
v=\sum_{i=1}^N \chi_i v_i\in H^1(D),
\end{align*}
then there holds
\begin{align}
\|v\|_{L^2(D)}&\leq \sqrt{\Cov}C_{\infty}\left(\sum_{i=1}^N\|v_i\|_{L^2(\omega_i)}^2\right)^{1/2},\nonumber\\
\|\nabla v\|_{L^2(D)}&\leq \sqrt{2\Cov}\left(\sum_{i=1}^NC_{\text{G}}^2H^{-2}\|v_i\|_{L^2(\omega_i)}^2
+\|\chi_i\nabla v_i\|_{L^2(\omega_i)}^2\right)^{1/2}.\label{eq:glo-err-pum}
\end{align}
\end{theorem}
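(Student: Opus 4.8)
The plan is to exploit the two defining features of the $(\Cov,C_{\infty},C_{\text{G}})$ partition of unity: the pointwise finite-overlap property together with the $L^{\infty}$ bounds on $\chi_i$ and $\nabla\chi_i$. The crucial observation is that since $\text{supp}(\chi_i)\subset\omega_i$ and each $\omega_i$ is a union of whole coarse elements, at any point $x\in D$ the set of indices $i$ with $\chi_i(x)\neq 0$ has cardinality at most $\Cov$; this is precisely what the overlap constant in \eqref{eq:overlap} encodes. This bounded overlap is what allows pointwise sums to be converted into $\ell^2$-type sums with a controlled constant, and it replaces the naive factor $N$ by $\Cov$.

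For the $L^{2}$ bound I would write $v(x)=\sum_i \chi_i(x)v_i(x)$, note that at each $x$ at most $\Cov$ terms are nonzero, and apply the Cauchy--Schwarz inequality in the discrete form $|\sum_{k=1}^m a_k|^2\le m\sum_{k=1}^m|a_k|^2$ with $m\le\Cov$. Using $\|\chi_i\|_{L^{\infty}(\omega_i)}\le C_{\infty}$ and the support property yields the pointwise estimate $|v(x)|^2\le \Cov\, C_{\infty}^2\sum_i |v_i(x)|^2\mathbf{1}_{\omega_i}(x)$; integrating over $D$ and taking square roots gives the first inequality, where the restriction $\text{supp}(\chi_i)\subset\omega_i$ turns the integral over $D$ into an integral over $\omega_i$.

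For the gradient I would first apply the product rule, $\nabla v=\sum_i v_i\nabla\chi_i+\sum_i \chi_i\nabla v_i$, and split via $(a+b)^2\le 2a^2+2b^2$ into two pointwise sums. To each sum I apply the same finite-overlap Cauchy--Schwarz step, using $\|\nabla\chi_i\|_{L^{\infty}(\omega_i)}\le C_{\text{G}}H^{-1}$ on the first sum and keeping $\chi_i\nabla v_i$ intact in the second. This produces $|\nabla v(x)|^2\le 2\Cov\sum_i\bigl(C_{\text{G}}^2H^{-2}|v_i(x)|^2\mathbf{1}_{\omega_i}(x)+|\chi_i\nabla v_i|^2(x)\bigr)$; integration and a square root give \eqref{eq:glo-err-pum}, with the factor $\sqrt{2\Cov}$ arising from the splitting and the overlap count.

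There is no serious obstacle here: this is the partition-of-unity estimate \cite[Theorem 2.1]{melenk1996partition} adapted to the present setting. The only point requiring care is the bookkeeping of the overlap constant, namely that the finite-overlap Cauchy--Schwarz step is applied with the correct cardinality $\Cov$ rather than $N$, and that the support property is used to localize each integral to $\omega_i$. One could additionally exploit $\sum_i\nabla\chi_i=0$ to sharpen the gradient bound, but this is not needed for the stated inequalities.
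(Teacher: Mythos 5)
Your proof is correct and is essentially the argument the paper relies on: the paper does not spell out a proof but invokes \cite[Theorem 2.1]{melenk1996partition}, whose standard finite-overlap Cauchy--Schwarz argument is exactly what you reproduce, with the correct accounting of $\Cov$, $C_{\infty}$, and $C_{\text{G}}H^{-1}$. Your choice to keep $\chi_i\nabla v_i$ intact in the second sum (rather than bounding $|\chi_i|\le C_\infty$ and producing $\|\nabla v_i\|_{L^2(\omega_i)}$) is precisely the ``slight modification'' of Melenk--Babu\v{s}ka that the paper's remark following the theorem describes, so the approach and the resulting constants match the stated bounds exactly.
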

\begin{remark}
The global estimate in \eqref{eq:glo-err-pum} is slightly different from \cite[Theorem 2.1]{melenk1996partition} since the local estimate $\|\chi_i\nabla v_i\|_{L^2(\omega_i)}$ is utilized instead of $\|\nabla v_i\|_{L^2(\omega_i)}$. The main reason is that our proposed local multiscale space $\mathcal{L}_i^{-1}(V_{i,\ell})$ do not have approximation rate in the sense that the following quantity
\begin{align*}
 \min\limits_{w\in \mathcal{L}_i^{-1}(V_{i,\ell})}
\normL{\nabla(u^{i,\romb}-w)}{\omega_i}
 \end{align*}
may not admit a certain decay rate with respect to the level parameter $\ell$. However, we can prove the decay rate of the following modified quantity,
\begin{align*}
 \min\limits_{w\in \mathcal{L}_i^{-1}(V_{i,\ell})}
\normL{\chi_i\nabla(u^{i,\romb}-w)}{\omega_i}.
 \end{align*}
\end{remark}
%
\begin{lemma}[{\em A priori} estimate for $u^{\romb}$]\label{lem:parta}
Assume that $D\subset\mathbb{R}^d$ for $d=2,3$. Let $u^{i,\romb}$ and $u^{\romb}$ be defined in \eqref{eq:111} and \eqref{eq:glo-decomp} and let $f\in L^2(D)$. Then it holds
\begin{align*}
\|\nabla u^{i,\romb}\|_{L^2(D)}&\lesssim \noteLi{\frac{H}{\epsilon}} \normL{f}{\omega_i}+\|\nabla u\|_{L^2(\omega_i)}\\
\|\nabla u^{\romb}\|_{L^2(D)}&\lesssim \noteLi{\frac{H}{\epsilon}} \normL{f}{D}+\|\nabla u\|_{L^2(D)}.
\end{align*}
\end{lemma}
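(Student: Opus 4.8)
The plan is to estimate $u^{i,\romb}$ by comparing it with the original solution $u$ restricted to $\omega_i$, exploiting the local splitting \eqref{eq:decomp1}. Recall from \eqref{eq:decomp1} that $u|_{\omega_i}=u^{i,\roma}+u^{i,\romb}$, so I can write $u^{i,\romb}=u|_{\omega_i}-u^{i,\roma}$ and hence $\nabla u^{i,\romb}=\nabla u-\nabla u^{i,\roma}$ on $\omega_i$. The triangle inequality then reduces the first bound to controlling $\|\nabla u^{i,\roma}\|_{L^2(\omega_i)}$ and $\|\nabla u\|_{L^2(\omega_i)}$, where the latter already appears verbatim in the target estimate. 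So the crux is an a priori bound on the local bubble part $u^{i,\roma}$ solving the homogeneous-boundary problem \eqref{eq:u-roma1conv}.

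First I would derive the energy estimate for $u^{i,\roma}$. Since $u^{i,\roma}\in H^1_0(\omega_i)$, the local $V$-ellipticity analogous to \eqref{eqn:v-elliptic} gives $a_{\omega_i}(u^{i,\roma},u^{i,\roma})=\epsilon\|\nabla u^{i,\roma}\|_{L^2(\omega_i)}^2$. Testing \eqref{eq:u-roma1conv} against $u^{i,\roma}$ yields $\epsilon\|\nabla u^{i,\roma}\|_{L^2(\omega_i)}^2=(f,u^{i,\roma})_{\omega_i}\leq \normL{f}{\omega_i}\normL{u^{i,\roma}}{\omega_i}$. Applying the Poincar\'e--Friedrichs inequality on $\omega_i$, whose constant scales like $H$ since $\mathrm{diam}(\omega_i)\lesssim H$, I obtain $\normL{u^{i,\roma}}{\omega_i}\lesssim H\|\nabla u^{i,\roma}\|_{L^2(\omega_i)}$. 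Combining these and dividing through by $\|\nabla u^{i,\roma}\|_{L^2(\omega_i)}$ gives
\begin{align*}
\|\nabla u^{i,\roma}\|_{L^2(\omega_i)}\lesssim \frac{H}{\epsilon}\normL{f}{\omega_i}.
\end{align*}
Together with $\|\nabla u^{i,\romb}\|_{L^2(\omega_i)}\leq \|\nabla u\|_{L^2(\omega_i)}+\|\nabla u^{i,\roma}\|_{L^2(\omega_i)}$ this establishes the first inequality of the lemma. (Because $u^{i,\romb}$ is only defined on $\omega_i$, I read the norm $\|\nabla u^{i,\romb}\|_{L^2(D)}$ in the statement as the $L^2(\omega_i)$ norm after extension by zero, or simply as the $\omega_i$-norm.)

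For the second, global inequality I would sum the local bounds over $i=1,\dots,N$. The factor $\frac{H}{\epsilon}$ is uniform in $i$, so squaring and summing the local estimates, using the finite overlap of the cover $\{\omega_i\}$ encoded by $\Cov$ from \eqref{eq:overlap}, gives $\sum_i\normL{f}{\omega_i}^2\lesssim \Cov\normL{f}{D}^2$ and likewise $\sum_i\|\nabla u\|_{L^2(\omega_i)}^2\lesssim \Cov\|\nabla u\|_{L^2(D)}^2$; since $\Cov$ is a benign constant it is absorbed into the $\lesssim$. The main obstacle is not any single estimate but making the Poincar\'e constant scaling in $H$ precise and honest: the inequality $\normL{v}{\omega_i}\lesssim H\|\nabla v\|_{L^2(\omega_i)}$ for $v\in H^1_0(\omega_i)$ relies on the quasi-uniformity of $\mathcal{T}^H$ so that all neighborhoods have diameter comparable to $H$ with a uniform constant, which is exactly the source of the sharp $\frac{H}{\epsilon}$ factor distinguishing this convection--diffusion setting from the elliptic case where the bubble part was discarded.
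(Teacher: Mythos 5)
Your local estimate is correct and coincides with the paper's argument: test \eqref{eq:u-roma1conv} with $u^{i,\roma}$, use the $H$-scaled Poincar\'e inequality to get $\normL{\nabla u^{i,\roma}}{\omega_i}\lesssim \frac{H}{\epsilon}\normL{f}{\omega_i}$, and conclude by the triangle inequality via $u^{i,\romb}=u|_{\omega_i}-u^{i,\roma}$. (Reading the norms on the left-hand sides as $L^2(\omega_i)$-norms is also what the paper's own proof does.)

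The global step, however, has a genuine gap. You pass from the local bounds to the global one by ``squaring and summing with finite overlap,'' which implicitly uses
\begin{equation*}
\normL{\nabla u^{\romb}}{D}^2\lesssim \sum_{i=1}^N\normL{\nabla u^{i,\romb}}{\omega_i}^2 .
\end{equation*}
This inequality is not available: by \eqref{eq:harmonic-glo}, $u^{\romb}=\sum_i\chi_i u^{i,\romb}$ is a partition-of-unity--weighted combination, not a plain superposition, so the product rule gives $\nabla u^{\romb}=\sum_i\chi_i\nabla u^{i,\romb}+\sum_i(\nabla\chi_i)\,u^{i,\romb}$. The second sum carries a factor $|\nabla\chi_i|\lesssim H^{-1}$ against $\normL{u^{i,\romb}}{\omega_i}$, and there is no Poincar\'e inequality for $u^{i,\romb}$ (it equals $u$ on $\partial\omega_i$, it does not vanish there), so these terms are not controlled by the quantities you have estimated. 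This is exactly why the paper's Theorem \ref{thm:pum} requires the \emph{$L^2$-norms} $H^{-1}\|v_i\|_{L^2(\omega_i)}$ in \eqref{eq:glo-err-pum}, not only the gradients. The paper avoids the issue by never summing the $u^{i,\romb}$ bounds: it writes $u^{\romb}=u-u^{\roma}$ globally and bounds $\normL{\nabla u^{\roma}}{D}$ by applying Theorem \ref{thm:pum} to the bubble parts $u^{i,\roma}$, for which \emph{both} needed local bounds follow from your own energy estimate, namely $\normL{u^{i,\roma}}{\omega_i}\lesssim\frac{H^2}{\epsilon}\normL{f}{\omega_i}$ (the extra power of $H$ exactly absorbs the $H^{-1}$ coming from $\nabla\chi_i$) and $\normL{\nabla u^{i,\roma}}{\omega_i}\lesssim\frac{H}{\epsilon}\normL{f}{\omega_i}$; this yields \eqref{eq:local-part1} and then the triangle inequality finishes. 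If you want to rescue your route instead, you must treat the commutator term explicitly: since $\sum_i\nabla\chi_i=0$ and $u^{i,\romb}-u=-u^{i,\roma}$ on $\omega_i$, one has $\sum_i(\nabla\chi_i)u^{i,\romb}=-\sum_i(\nabla\chi_i)u^{i,\roma}$, which is again controlled by the $L^2$-bound on the bubbles---but some such step is indispensable, and your proposal omits it.
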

\begin{proof}
On the one hand,
multiplying \eqref{eq:u-roma1conv} by $u^{i,\roma}$ and then taking its integral over $\omega_i$ lead to
\begin{align*}
\epsilon\normL{\nabla u^{i,\roma}}{\omega_i}^2
=\int_{\omega_i}f u^{i,\roma}\dx.
\end{align*}
Notice that $u^{i,\roma}=0$ on $\partial\omega_i$, therefore, an application of the Poincar\'{e} inequality reveals
\begin{align*}
\normL{u^{i,\roma}}{\omega_i}+H\normL{\nabla u^{i,\roma}}{\omega_i}\lesssim \noteLi{\frac{H^2}{\epsilon}} \normL{f}{\omega_i}.
\end{align*}
Together with the local splitting \eqref{eq:decomp1}, this implies 
\begin{align*}
\normL{\nabla u^{i,\romb}}{\omega_i}&=\normL{\nabla (u-u^{i,\roma})}{\omega_i}\leq \normL{\nabla u^{i,\roma}}{\omega_i}+\normL{\nabla u}{\omega_i}\\
&\lesssim \noteLi{\frac{H}{\epsilon}} \normL{f}{\omega_i}+\|\nabla u\|_{L^2(\omega_i)}.
\end{align*}
On the other hand, using Theorem \ref{thm:pum}, we obtain
\begin{align}\label{eq:local-part1}
\normL{u^{\roma}}{D}+H\normL{\nabla u^{\roma}}{D}\lesssim \noteLi{\frac{H^2}{\epsilon}}\normL{f}{D}.
\end{align}
Then the global splitting \eqref{eq:glo-decomp} indicates
\begin{align*}
\normL{\nabla u^{\romb}}{D}=\normL{\nabla (u-u^{\roma})}{D}\leq \normL{\nabla u}{D}+\normL{\nabla u^{\roma}}{D},
\end{align*}
which, together with \eqref{eq:local-part1}, leads to the desired assertion.
\end{proof}
Next, we present the approximation properties of the global interpolation operator $\mathcal{P}_{\ell}$ \eqref{eq:glo-proj},
\begin{lemma}[Approximation properties of $\mathcal{P}_{\ell}$]\label{prop:glo-proj}
Assume that $D\subset\mathbb{R}^d$ for $d=2,3$, $s>0$, $f\in L^{2}(D)$ and the level parameter $\ell\in \mathbb{N}$ is non-negative. Let $u\in V$ be the solution to Problem \eqref{eq:original} and the global Harmonic extension $u^{\romb}$ be defined in \eqref{eq:harmonic-glo}. Then there holds
\begin{align}
\normL{u^{\romb}-\mathcal{P}_{\ell}u^{\romb}}{D}
&\lesssim (\PeH^{-1/2}+\PeH^{1/2}) H^{s+1/2}2^{-s\ell}\|u\|_{H^{s+1/2}(D)}, \label{eq:glo-l2}\\
\normL{\nabla(u^{\romb}-\mathcal{P}_{\ell}u^{\romb})}{D}
&\lesssim (\PeH^{-1/2}+\PeH) H^{{s-1/2}}2^{-s\ell}\|u\|_{H^{s+1/2}(D)}.\label{eq:glo-energy}
\end{align}
\end{lemma}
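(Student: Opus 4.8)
The plan is to localise the global error through the partition-of-unity estimate \cref{thm:pum}, reduce each local contribution to the $\mathcal{L}_i$-harmonic extension of a boundary approximation error, and then invoke two ingredients: the a priori (transposition) estimate of \cref{sec:appendix} to pass from $\partial\omega_i$ into $\omega_i$ with the sharp $\PeH$ dependence, and the hierarchical approximation rate of \cref{prop:approx-wavelets} on the coarse skeleton. First I would write, using \eqref{eq:harmonic-glo} and \eqref{eq:glo-proj},
\begin{align*}
u^{\romb}-\mathcal{P}_{\ell}u^{\romb}=\sum_{i=1}^N\chi_i v_i,\qquad v_i:=u^{i,\romb}-\mathcal{P}_{i,\ell}\big(u^{\romb}|_{\partial\omega_i}\big),
\end{align*}
and apply \cref{thm:pum} with this $v_i\in H^1(\omega_i)$. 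This reduces \eqref{eq:glo-l2} and \eqref{eq:glo-energy} to bounding $\sum_i\|v_i\|_{L^2(\omega_i)}^2$ and $\sum_i\|\chi_i\nabla v_i\|_{L^2(\omega_i)}^2$, the finite overlap $\Cov$ collapsing the local bounds into global ones.

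Next I would use that $v_i$ is $\mathcal{L}_i$-harmonic: since $u^{i,\romb}=\mathcal{L}_i^{-1}(u|_{\partial\omega_i})$ and $\mathcal{P}_{i,\ell}(u^{\romb}|_{\partial\omega_i})=\mathcal{L}_i^{-1}(\mathcal{I}_{i,\ell}(u^{\romb}|_{\partial\omega_i}))$ both solve \eqref{eq:Li}, $v_i=\mathcal{L}_i^{-1}g_i$ is the homogeneous extension of the trace datum $g_i:=v_i|_{\partial\omega_i}$, which (up to the bubble trace discussed below) is the hierarchical approximation error of the boundary trace. The crux is the transposition estimate of \cref{sec:appendix}, which I expect to furnish, for any $\mathcal{L}_i$-harmonic $w=\mathcal{L}_i^{-1}g$,
\begin{align*}
\|w\|_{L^2(\omega_i)}&\lesssim(\PeH^{-1/2}+\PeH^{1/2})\,H^{1/2}\,\|g\|_{L^2(\partial\omega_i)},\\
\|\chi_i\nabla w\|_{L^2(\omega_i)}&\lesssim(\PeH^{-1/2}+\PeH)\,H^{-1/2}\,\|g\|_{L^2(\partial\omega_i)}.
\end{align*}
Substituting $w=v_i$, $g=g_i$ produces exactly the $\PeH$-prefactors of the statement; in the gradient bound the extra term $H^{-1}\|v_i\|_{L^2(\omega_i)}$ coming from \cref{thm:pum} only contributes the subdominant $\PeH^{1/2}$, which is absorbed into $\PeH$.

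It then remains to control $\|g_i\|_{L^2(\partial\omega_i)}$. Because $\mathcal{I}_{i,\ell}$ is the $L^2(\partial\omega_i)$-orthogonal projection onto $V_{i,\ell}$ it is a best approximation, so the clean part of $g_i$ is dominated in $L^2(\partial\omega_i)$ by the edgewise hierarchical error; rescaling each coarse edge $\Gamma_i^k$ of size $\sim H$ to the reference cube $I^{d-1}$ and applying \cref{prop:approx-wavelets} gives a factor $H^s2^{-s\ell}$ times the local Gagliardo seminorm, the power $H^s$ being dictated by the $L^2$/seminorm scaling exponents. A (scale-invariant) fractional trace bound $|u|_{H^s(\partial\omega_i)}\lesssim\|u\|_{H^{s+1/2}(\omega_i)}$, valid for $s>0$, converts the boundary seminorm into the interior norm; squaring, summing over $i$ with finite overlap, and collecting the powers $H^{1/2}\cdot H^s$ (respectively $H^{-1/2}\cdot H^s$) reproduces $H^{s+1/2}$ in \eqref{eq:glo-l2} and $H^{s-1/2}$ in \eqref{eq:glo-energy}, with $\|u\|_{H^{s+1/2}(D)}$ on the right.

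I expect the transposition estimate to be the main obstacle: establishing the $L^2(\partial\omega_i)\to L^2(\omega_i)$ bound and the $\chi_i$-weighted gradient bound for the singularly perturbed operator $\mathcal{L}_i$ with $\epsilon$-robust powers of $\PeH$ is the step genuinely specific to convection-diffusion, and it is precisely what forces the weighted gradient $\|\chi_i\nabla v_i\|$ in \cref{thm:pum} (the unweighted $\|\nabla v_i\|$ carries no decay near the layers). A secondary point requiring care is the datum $g_i=u|_{\partial\omega_i}-\mathcal{I}_{i,\ell}(u^{\romb}|_{\partial\omega_i})$: the mismatch between the traces of $u$ and $u^{\romb}$ is the bubble trace $u^{\roma}|_{\partial\omega_i}$, which must be re-expressed through $u$ and absorbed into $\|u\|_{H^{s+1/2}(D)}$ so as not to introduce an $f$-dependent term on the right-hand side.
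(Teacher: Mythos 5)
Your main line is exactly the paper's proof: localize the error with \cref{thm:pum}, bound each local error on $\partial\omega_i$ by the hierarchical $L^2$-projection error with the scaled rate $H^s2^{-s\ell}$ plus a fractional trace bound into $\|u\|_{H^{s+1/2}(\omega_i)}$, transfer from $\partial\omega_i$ into $\omega_i$ with the transposition estimates of \cref{lem:very-weak} and \cref{corollary:very-weak} (whose $\PeH$-prefactors you anticipated exactly, including the observation that the $H^{-1}\|v_i\|_{L^2(\omega_i)}$ term from \cref{thm:pum} is subdominant), and sum over $i$ using finite overlap. Structure, key lemmas, and the bookkeeping of powers of $H$, $\PeH$, $2^{-s\ell}$ all coincide with the paper.

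The genuine problem is the point you defer to your last paragraph. With the literal definition \eqref{eq:glo-proj}, your boundary datum is $g_i=u|_{\partial\omega_i}-\mathcal{I}_{i,\ell}\bigl(u^{\romb}|_{\partial\omega_i}\bigr)=\bigl(u|_{\partial\omega_i}-\mathcal{I}_{i,\ell}(u|_{\partial\omega_i})\bigr)+\mathcal{I}_{i,\ell}\bigl(u^{\roma}|_{\partial\omega_i}\bigr)$, and you claim the second piece can be ``re-expressed through $u$ and absorbed into $\|u\|_{H^{s+1/2}(D)}$.'' It cannot: $\mathcal{I}_{i,\ell}(u^{\roma}|_{\partial\omega_i})$ is the $L^2$-projection of the (generally nonzero) bubble trace, not an approximation error, so it converges to $u^{\roma}|_{\partial\omega_i}$ as $\ell\to\infty$ and carries no factor $2^{-s\ell}$; moreover, by the bubble estimate underlying \cref{lem:parta} and a trace inequality it scales like $\epsilon^{-1}H^{3/2}\normL{f}{\omega_j}$, i.e.\ it is $f$-dependent and blows up as $\epsilon\to0$. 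So with the literal reading of $\mathcal{P}_\ell$ your error would not decay in $\ell$ at all. The paper avoids this silently: in its proof the local error is $e^i=u^{i,\romb}-\mathcal{P}_{i,\ell}u^{i,\romb}$, i.e.\ the local projector acts on the trace of the \emph{local} harmonic part, $u^{i,\romb}|_{\partial\omega_i}=u|_{\partial\omega_i}$, so the datum is the pure hierarchical approximation error and no mismatch term ever appears. (This is a discrepancy between the paper's displayed definition of $\mathcal{P}_\ell$ and its proof; the approximant actually estimated is $\sum_i\chi_i\mathcal{L}_i^{-1}\mathcal{I}_{i,\ell}(u|_{\partial\omega_i})\in V_{\text{ms},\ell}$, which is all the C\'ea-type argument in \cref{prop:wavelet-basedconv} needs.) To repair your proposal, replace your $v_i$ by $u^{i,\romb}-\mathcal{L}_i^{-1}\mathcal{I}_{i,\ell}(u|_{\partial\omega_i})$; everything else you wrote then goes through verbatim.
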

\begin{proof}
Let $\locv{e}{}{}:=u^{\romb}-\mathcal{P}_{\ell}u^{\romb}$ be the global error, then the property of the partition of unity of $\{\chi_i\}_{i=1}^{N}$, together with \eqref{eq:harmonic-glo}, leads to
\[
\locv{e}{}{}=\sum\limits_{i=1}^{N}\chi_i\locv{e}{i}{} \qquad\text{ with }
\qquad\locv{e}{i}{}:=u^{i,\romb}-\mc{P}_{i,\ell}u^{i,\romb}.
\]
Our proof is composed of three steps.

\noindent Step 1. Estimate the local error $\locv{e}{i}{}$ over each local boundary $\partial\omega_i$.  Using the $L^2$-projection error for hierarchical bases \eqref{prop:approx-wavelets} and a scaling argument, we obtain
\begin{align}
\|\locv{e}{i}{}\|_{L^2(\partial \omega_i)}
&=\|u^{i,\romb}-\mc{P}_{i,\ell}u^{i,\romb}\|_{L^2(\partial\omega_i)}\nonumber\\
&\workLg{\lesssim 2^{-\textcolor{black}{s}\ell} {H^{\textcolor{black}{s}}}} |u^{i,\romb}|_{\textcolor{black}{H^{s}(\partial\omega_i)}}\nonumber.
\end{align}
The definition of the local solution $u^{i,\romb}$ \eqref{eq:111}, \textcolor{black}{together with the Sobolev embedding theorem}, implies,
\begin{align}
\|\locv{e}{i}{}\|_{L^2(\partial \omega_i)}
&\workLg{\lesssim 2^{-s\ell} H^{s}}\|u\|_{\textcolor{black}{H^{s+1/2}(\omega_i)}}.\label{eq:yyyy}
\end{align}
\noindent Step 2. Estimate the local error $\locv{e}{i}{}$ over each coarse neighborhood $\omega_i$ mainly by the transposition method established in \cref{sec:appendix}.

Note that each local error $\locv{e}{i}{}$ satisfies the following equation,
\begin{equation*}
\begin{aligned}
\mathcal{L}_i \locv{e}{i}{}:=-\epsilon\Delta \locv{e}{i}{}+\bm{b}\cdot\nabla \locv{e}{i}{}&=0 && \text{ in } \omega_i\\
v&=0 &&\text{ on }\partial \omega_i\cap\partial D.
\end{aligned}
\end{equation*}
\cref{lem:very-weak} and \cref{corollary:very-weak} indicate that the local error $\locv{e}{i}{}$ inside of $\omega_i$ can be bounded by its boundary data,
\begin{align*}
\|\locv{e}{i}{}\|_{L^2(\omega_i)}&\leq {\rm C}_{{\rm weak}}\textcolor{black}{(\PeH^{-1/2}+\PeH^{1/2})H^{1/2}}\|\locv{e}{i}{}\|_{L^2(\partial \omega_i)}\\
\normL{\chi_i\nabla \locv{e}{i}{}}{\omega_i}&\leq {\rm C}_{{\rm weak}}(\textcolor{black}{\PeH^{-1/2}+\PeH})H^{-1/2}\|\locv{e}{i}{}\|_{L^2(\partial \omega_i)}.
\end{align*}
Then together with \eqref{eq:yyyy}, this leads to
\begin{align}
\|\locv{e}{i}{}\|_{L^2(\omega_i)}&\leq {\rm C}_{{\rm weak}}\textcolor{black}{(\PeH^{-1/2}+\PeH^{1/2})}\workLg{ H^{s+1/2}2^{-s\ell} }\|u\|_{\textcolor{black}{H^{s+1/2}(\omega_i)}}\label{eq:11111}\\
\normL{\chi_i\nabla \locv{e}{i}{}}{\omega_i}&\leq {\rm C}_{{\rm weak}} \left(\textcolor{black}{\PeH^{-1/2}+{\PeH}}\right) H^{s-\frac{1}{2}}2^{-s\ell}\|u\|_{\textcolor{black}{H^{s+1/2}(\omega_i)}}.\label{eq:22222}
\end{align}
\noindent Step 3. Estimate the global error by summation of local error. Using \cref{thm:pum}, we obtain
\begin{align*}
\normL{\locv{e}{}{}}{D}
\leq \sqrt{\Cov}C_{\infty}\left(\sum_{i=1}^N\|\locv{e}{i}{}\|_{L^2(\omega_i)}^2\right)^{1/2},
\end{align*}
which, together with \eqref{eq:11111} and the overlapping condition \eqref{eq:overlap} leads to
\begin{align*}
\normL{\locv{e}{}{}}{D}
\leq C_{\text{weak}}\sqrt{\Cov}C_{\infty}\textcolor{black}{(\PeH^{-1/2}+\PeH^{1/2})}{ H^{s+1/2}2^{-s\ell} }
\left(\sum_{i=1}^N\|u\|_{\textcolor{black}{H^{s+1/2}(\omega_i)}}^2\right)^{1/2}.
\end{align*}
Consequently, we derive
\begin{align*}
\normL{\locv{e}{}{}}{D}
\leq C_{\text{weak}}\sqrt{\Cov}C_{\infty}\textcolor{black}{(\PeH^{-1/2}+\PeH^{1/2})}\workLg{H^{s+1/2}2^{-s\ell}}\|u\|_{\textcolor{black}{H^{s+1/2}(D)}}.
\end{align*}
This proves \eqref{eq:glo-l2}.

Next, we prove \eqref{eq:glo-energy}.
Using again \cref{thm:pum}, we obtain
\begin{align}\label{eq:33333}
\|\nabla \locv{e}{}{}\|_{L^2(D)}&\leq \sqrt{2\Cov}\left(\sum_{i=1}^NC_{\text{G}}^2H^{-2}\|\locv{e}{i}{}\|_{L^2(\omega_i)}^2
+\|\chi_i\nabla \locv{e}{i}{}\|_{L^2(\omega_i)}^2\right)^{1/2}.
\end{align}
Combining with \eqref{eq:11111} and \eqref{eq:22222}, we have proved \eqref{eq:glo-energy}, and this completes our proof.
\end{proof}
Finally, we investigate the error between the exact solution $u$ to Problems \eqref{eq:original} and the multiscale solution $u_{{\rm ms},\ell}$, which is derived from first establishment of the quasi-optimality of $u_{{\rm ms},\ell}$ that is analogous to the classical C\'{e}a's lemma, and then apply the approximation properties of the edge multiscale ansatz space stated in \cref{prop:glo-proj}.
\begin{theorem}[Error estimate for \cref{algorithm:wavelet}]\label{prop:wavelet-basedconv}
For $d=2,3$ and  $s>0$. Let $\ell\in \mathbb{N}$ be non-negative. Let $u\in V$ be the solution to Problem \eqref{eq:original} and let $u_{\text{ms},\ell}$ be the multiscale solution defined in \eqref{eq:u-ms}. Then there holds
\begin{equation}\label{eq:waveletErrconv}
\begin{aligned}
\normL{\nabla(u-u_{{\rm ms},\ell})}{D}\lesssim
\left(\PeH^{-1/2}+\textcolor{black}{\PeH^{3/2}}\right) H^{s-1/2}{2^{-s\ell}}\|u\|_{\textcolor{black}{H^{s+1/2}(D)}}.
\end{aligned}
\end{equation}
\end{theorem}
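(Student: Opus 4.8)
The plan is to reduce everything to a Galerkin approximation problem for the Harmonic part $u^{\romb}$ and then combine a \emph{sharpened} C\'ea-type estimate with the two approximation bounds of \cref{prop:glo-proj}. First I would note that the bubble parts cancel: since $u=u^{\roma}+u^{\romb}$ by \eqref{eq:glo-decomp} and $u_{\text{ms},\ell}=u^{\roma}+u^{\romb}_{\text{ms},\ell}$ by \eqref{eq:u-ms}, the total error is $u-u_{\text{ms},\ell}=u^{\romb}-u^{\romb}_{\text{ms},\ell}=:e$, so it suffices to bound $\normL{\nabla e}{D}$. Next I establish Galerkin orthogonality: testing the weak formulation \eqref{eqn:weakform} against $v\in V_{\text{ms},\ell}\subset V$ and splitting $u$ gives $a(u^{\romb},v)=(f,v)_D-a(u^{\roma},v)$; subtracting the discrete equation \eqref{eqn:weakform_h-romb} yields $a(e,v)=0$ for all $v\in V_{\text{ms},\ell}$.

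The core of the argument is a C\'ea-type bound that \emph{avoids} the pessimistic factor $\Cb/\epsilon\sim\mathrm{Pe}$ in front of the energy-norm approximation error. Using the $V$-ellipticity \eqref{eqn:v-elliptic} (which already encodes $\nabla\cdot\bm{b}=0$) together with the orthogonality above, for any $w\in V_{\text{ms},\ell}$ I would write $\epsilon\normL{\nabla e}{D}^2=a(e,e)=a(e,u^{\romb}-w)$. I then bound the diffusion and convection parts of $a(e,u^{\romb}-w)$ separately, keeping the gradient on $e$ and only the $L^2$ factor on $\eta:=u^{\romb}-w$, to obtain $\epsilon\normL{\nabla e}{D}\le \epsilon\normL{\nabla\eta}{D}+\normI{\bm{b}}{D}\normL{\eta}{D}$, i.e.
$$\normL{\nabla e}{D}\le \normL{\nabla\eta}{D}+\mathrm{Pe}\,\normL{\eta}{D}.$$
The crucial point is that the convection term produces only an $L^2$ factor, so the large constant $\mathrm{Pe}$ multiplies the \emph{smaller} $L^2$ approximation error rather than the energy-norm one.

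Finally I would choose $w=\mathcal{P}_{\ell}u^{\romb}$ and insert the two estimates of \cref{prop:glo-proj}. Using $\mathrm{Pe}=\PeH/H$, the $L^2$ contribution becomes $\mathrm{Pe}\cdot(\PeH^{-1/2}+\PeH^{1/2})H^{s+1/2}2^{-s\ell}\|u\|_{H^{s+1/2}(D)}=(\PeH^{1/2}+\PeH^{3/2})H^{s-1/2}2^{-s\ell}\|u\|_{H^{s+1/2}(D)}$, while the energy contribution is $(\PeH^{-1/2}+\PeH)H^{s-1/2}2^{-s\ell}\|u\|_{H^{s+1/2}(D)}$. Summing the four resulting powers of $\PeH$ and observing that in either regime $\PeH\le 1$ or $\PeH\ge 1$ the intermediate powers $\PeH^{1/2}$ and $\PeH$ are dominated by $\PeH^{-1/2}+\PeH^{3/2}$, I recover the claimed bound. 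The main obstacle is precisely the sharpened C\'ea estimate of the second step: a blunt application of the standard C\'ea lemma would multiply the energy-norm approximation error by the continuity-to-coercivity ratio $\Cb/\epsilon\sim\mathrm{Pe}$, producing an extra factor of $\PeH$ and hence the suboptimal power $\PeH^{5/2}$ rather than $\PeH^{3/2}$; routing the $\mathrm{Pe}$ factor onto the higher-$H$-power $L^2$ error is what makes the exponents balance.
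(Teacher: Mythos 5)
Your proof is correct and takes essentially the same route as the paper's own argument: bubble cancellation so that the error equals $u^{\romb}-u^{\romb}_{{\rm ms},\ell}$, Galerkin orthogonality from \eqref{eqn:weakform_h-romb}, the sharpened C\'ea-type bound $\normL{\nabla e}{D}\le \normL{\nabla \eta}{D}+\mathrm{Pe}\,\normL{\eta}{D}$ with $\eta:=u^{\romb}-\mathcal{P}_{\ell}u^{\romb}$ (diffusion term kept in energy norm, convection term routed onto the $L^2$ factor), and finally the two estimates of \cref{prop:glo-proj} combined via $\mathrm{Pe}=\PeH/H$. The ``sharpening'' you highlight is exactly what the paper does, so there is no methodological difference.
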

\begin{proof}
We obtain from the global decomposition \eqref{eq:glo-decomp} and \eqref{eq:u-ms},
\begin{align*}
\epsilon\normL{\nabla(u-u_{{\rm ms},\ell})}{D}^2
&=a(u^{\romb}-u_{{\rm ms},\ell}^{\romb},u^{\romb}-u_{{\rm ms},\ell}^{\romb}).
\end{align*}
Since $V_{\text{ms},\ell}\subset V$, \noteLg{together with the definition of $u_{{\rm ms},\ell}^{\romb}$ in \eqref{eqn:weakform_h-romb},} this implies
\begin{align*}
\epsilon\normL{\nabla(u-u_{{\rm ms},\ell})}{D}^2
&=a(u^{\romb}-u_{{\rm ms},\ell}^{\romb},u^{\romb}-\mathcal{P}_{\ell}u^{\romb}).
\end{align*}
The definition of the bilinear from $a(\cdot,\cdot)$ \eqref{eq:bilinear-form}, together with the Cauchy-Schwarz inequality, further leads to
\begin{align*}
&\qquad\epsilon\normL{\nabla(u^{\romb}-u^{\romb}_{{\rm ms},\ell})}{D}^2\\
&=\epsilon\int_D\nabla(u^{\romb}-u_{{\rm ms},\ell}^{\romb})\cdot\nabla(u^{\romb}-\mathcal{P}_{\ell}u^{\romb})\mathrm{d}x
+\int_{D}\bm{b}\noteLg{\cdot} \nabla(u^{\romb}-u_{{\rm ms},\ell}^{\romb})(u^{\romb}-\mathcal{P}_{\ell}u^{\romb})\mathrm{d}x\\
&\leq {\epsilon}\normL{\nabla(u^{\romb}-u_{{\rm ms},\ell}^{\romb})}{D}\normL{\nabla(u^{\romb}-\mathcal{P}_{\ell}u^{\romb})}{D}\\
&+
\|\bm{b}\|_{L^{\infty}(D)}\normL{\nabla(u^{\romb}-u_{{\rm ms},\ell}^{\romb})}{D}\normL{u^{\romb}-\mathcal{P}_{\ell}u^{\romb}}{D}.
\end{align*}
Consequently, we derive
\begin{align*}
\normL{\nabla(u-u_{{\rm ms},\ell})}{D}\leq \normL{\nabla(u^{\romb}-\mathcal{P}_{\ell}u^{\romb})}{D}+
\mathrm{Pe}\normL{u^{\romb}-\mathcal{P}_{\ell}u^{\romb}}{D}.
\end{align*}
Finally, an application of \eqref{eq:glo-l2} and \eqref{eq:glo-energy} yields the desired assertion.
\end{proof}
Compared with the standard finite element estimate \eqref{eq:q1element-error}, Theorem \ref{prop:wavelet-basedconv} implies convergence with respect to the level parameter $\ell$ instead of $H$. In the case that $\epsilon\ll 1$,  \eqref{eq:q1element-error} becomes much worse since $\|u\|_{H^2(D)}$ scales as $\epsilon^{-\alpha}$ for some positive constant $\alpha$. For $d=1$, it is proved that $\|u\|_{H^2(D)}\lesssim \epsilon^{-3/2}$ \cite[Remark 3.4]{le2017numerical}. Moreover, we compare in Section \ref{sec:numer} the performance of our proposed method with the standard finite element method \eqref{eq:q1element-error} which demonstrates consistently the former attains more accuracy.
\section{Numerical tests}\label{sec:numer}
In this section, several numerical experiments are presented to illustrate the computational performance of Algorithm \ref{algorithm:wavelet}.
In our experiments, we set the computational domain to be $D:=[0,1]^d$ for $d=2$ and $3$ and the constant force is employed, namely $f:=1$. Let $\mathcal{T}^{H}$ be a regular quasi-uniform rectangular mesh over $D$ with maximal mesh size $H$ and let $\mathcal{T}^{h}$ be a regular quasi-uniform rectangular mesh over each coarse element $T\in \mathcal{T}^{H}$ with maximal mesh size $h$.
To estimate the accuracy of the numerical solution $u_{\text{ms},\ell}$ \eqref{eq:u-ms}, we calculate the relative $L^2(D)$-error and the relative \noteLi{semi-}$H^1(D)$-error defined by
\begin{equation*}
	e_{L^2}:=\frac{\|u_{\text{ms},\ell}-u_{\text{ref}}\|_{L^2(D)}}{\| u_{\text{ref}}\|_{L^2(D)}},\quad
	e_{H^1}:=\workLi{\frac{\|\nabla (u_{\text{ms},\ell}- u_{\text{ref}})\|_{L^2(D)}}{\|\nabla u_{\text{ref}}\|_{L^2(D)}}},
\end{equation*}
where $u_{\text{ref}}$ is the reference solution computed using classical $\mathbb{Q}^1$ conforming Galerkin over $\mathcal{T}^{h}$ with $h:=\sqrt{2}/2^{10}$ for $d=2$ and $h:=\noteLi{\sqrt{3}}/2^6$ for $d=3$. \noteLi{Here, $h$ denotes the diameter of the largest element.}

We will compare our approach with SUPG for 2-d Examples 1-4. Let us briefly review
the SUPG model to \eqref{eq:original} \cite{FRANCA1992253}.
Let $(\bullet,\bullet)_T:=(\bullet,\bullet)_{L^2(T)}$ denote the 
$L^2$ scalar product over an element $T$ and $\|\bm{b}\|_{L^{\infty}(T)}$ being the essential supremum of $\bm{b}$ over $T$ for $T\in\mathcal{T}^H$. Then SUPG seeks $u_H\in V_H$ such that 
\begin{align}\label{eq:SUPG}
B_{\mathrm{SUPG}}(u_H^{\mathrm{SUPG}},v_H)=F_{\mathrm{SUPG}}(v_H) 
  \qquad \text{for all }v_H\in V_H
\end{align}
with 
\begin{align*}
B_{\mathrm{SUPG}}(u_H^{\mathrm{SUPG}},v_H)=a(u_H^{\mathrm{SUPG}},v_H)
    +\delta_\mathrm{SUPG}\sum\limits_{T\in \mathcal{T}^H} 
         (\bm{b}\cdot\nabla u_H^{\mathrm{SUPG}},\bm{b}\cdot\nabla v_H)_{T}
\end{align*}
and 
\begin{align*}
F_{\mathrm{SUPG}}(v_H)
   =( f,v_H)_{D}
 +\delta_\mathrm{SUPG}\sum\limits_{k\in \mathcal{T}^H} (f,\bm{b}\cdot\nabla v_H)_{T}.
\end{align*}
Here,  $\delta_\mathrm{SUPG}$ indicates the stability parameter, and we 
choose
 \[
\delta_\mathrm{SUPG} = \frac{H^2}{2\sqrt{2}\epsilon\max(12/\sqrt{2},H\|\bm{b}\|_{L^{\infty}(T)}/\epsilon)}
\]
in our numerical test. 

Note that the performance of SUPG method \eqref{eq:SUPG} is sensitive to the selection of the parameter $\delta_\mathrm{SUPG}$, and it can only improve the accuracy outside of the layers but not near the layers as observed in, e.g., \cite{le2017numerical}. 

\subsection*{Example 1}
We take the perturbation parameter $\epsilon:=10^{-2}$ and the velocity field is assumed to be
\begin{align*}
\bm{b}:=\textcolor{black}{\beta}\left[\sin(k\pi x) \cos(k\pi y),-\cos(k\pi x) \sin(k\pi y)\right]^T,
\end{align*}
which has a cellular structure with several eddies and separatrices. We consider two pairs of parameter,
\begin{align*}
(\textcolor{black}{\beta},k):=(2,24) \text{ and }(\textcolor{black}{\beta},k):=(8,48).
\end{align*}
The errors provided by \cref{algorithm:wavelet} with different coarse mesh size $H$ and different level parameter $\ell$ is displayed in \cref{ta:cv1_24b} and \cref{ta:cv1_48b}, respectively. \noteLi{Specifically}, we depict the reference solution and the multiscale solution $u_{\text{ms},\ell}$ from \cref{algorithm:wavelet} with $H=\sqrt{2}/16$ and $\ell=1$ in \cref{fig:cv2d_24_2} and \cref{fig:cv2d_48_8}.
\begin{table}[htbp]
	\centering
		\begin{adjustbox}{max width=\textwidth}
		\begin{tabular}{|c|c|c|c|c|c|c|c|c|c|c|c|}
			\hline
			\multirow{2}{*}{$H$}&\multirow{2}{*}{$\PeH$} & \multicolumn{2}{c|}{$\ell=0$} & \multicolumn{2}{c|}{$\ell=1$} & \multicolumn{2}{c|}{$\ell=2$}& \multicolumn{2}{c|}{FEM scheme \eqref{eqn:weakform_h}}& \multicolumn{2}{c|}{SUPG scheme \eqref{eq:SUPG}}\tabularnewline
			\cline{3-12}
			&& \specialcell{$e_{L^2}$} & \specialcell{$e_{H^1}$} & \specialcell{$e_{L^2}$} & \specialcell{$e_{H^1}$} & \specialcell{$e_{L^2}$} & \specialcell{$e_{H^1}$}&\specialcell{$e_{L^2}$} & \specialcell{$e_{H^1}$}&\specialcell{$e_{L^2}$} & \specialcell{$e_{H^1}$}\tabularnewline
			\hline
			$\sqrt{2}/8$ &50.0&0.83\% &4.44\% &0.32\%&2.03\% &0.19\%&1.06\% & 60.04\%& 79.39\%& 68.80\%& 82.27\% \tabularnewline
			\hline
			$\sqrt{2}/16$ &25.0&0.26\% &4.48\% &0.07\%&1.84\% &0.12\%&0.81\%  &61.55\%& 78.86\%& 47.08\%& 72.08\%  \tabularnewline
			\hline
			$\sqrt{2}/32$&12.5&0.28\% &7.19\% &0.03\%&2.09\% &0.0061\%&0.42\% & 10.32\%& 53.72\%& 21.09\%&53.00\% \tabularnewline
			\hline
			$\sqrt{2}/64$&6.25&0.11\% &6.28\% &0.01\%&1.08\% &0.0012\%&0.20\% & 0.88\%& 28.76\%& 4.26\%& 28.08\% \tabularnewline		
			\hline
		\end{tabular}
	\end{adjustbox}
	\caption{Errors provided by \cref{algorithm:wavelet},  \eqref{eqn:weakform_h} and \eqref{eq:SUPG} for Example 1 with $(\beta,k)=(2,24)$.}
	\label{ta:cv1_24b}
\end{table}
We observe that both relative errors $e_{L^2}$ and $e_{H^1}$ decrease as level parameter $\ell$ increases as expected. Errors corresponding to the case $(\beta,k)=(8,48)$ are generally larger than $(\beta,k)=(2,24)$ since the former case has a larger P\'{e}clet number and thus results in a more oscillating solution as exhibited in \cref{fig:cv2d_24_2} and \cref{fig:cv2d_48_8}. We also observe from \cref{fig:cv2d_24_2} and \cref{fig:cv2d_48_8} that the multiscale solution $u_{\text{ms},\ell}$ with $H=\sqrt{2}/16$ and $\ell=1$ approximates the reference solution well and recover multiscale details. On the contrary, both \eqref{eqn:weakform_h} and \eqref{eq:SUPG} fail to generate accurate solutions since polynomials can not capture oscillating information of the velocity fields especially if 
$H>\sqrt{2}/k$.
\begin{figure}[htbp]
	\centering
	\subfigure[$u_{\text{ref}}$]{
		\includegraphics[trim={0cm 0 0cm 0},clip,width=2.3in]{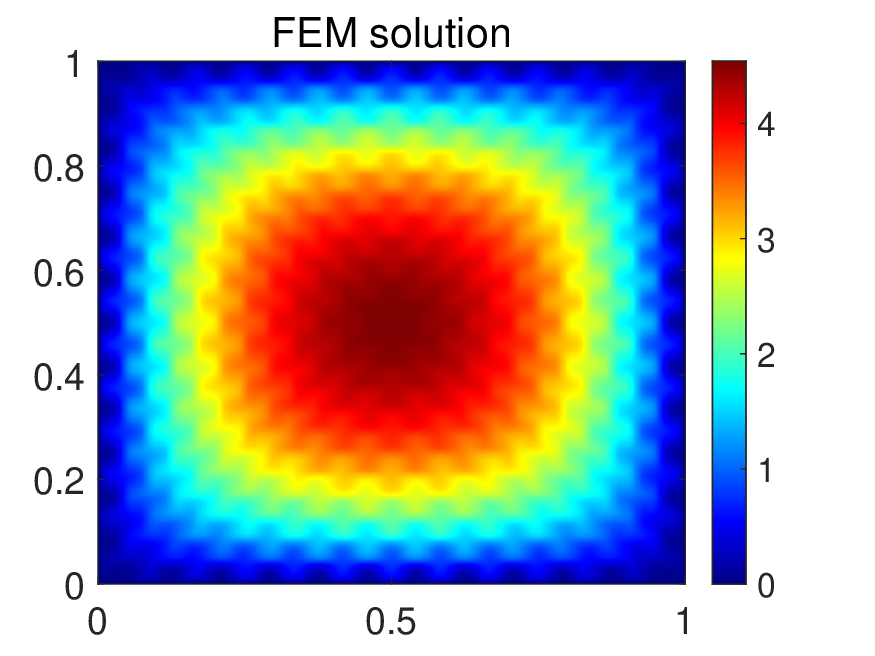}}
	\subfigure[$u_{\text{ms},\ell}$]{
		\includegraphics[trim={0cm 0 0cm 0},clip,width=2.3in]{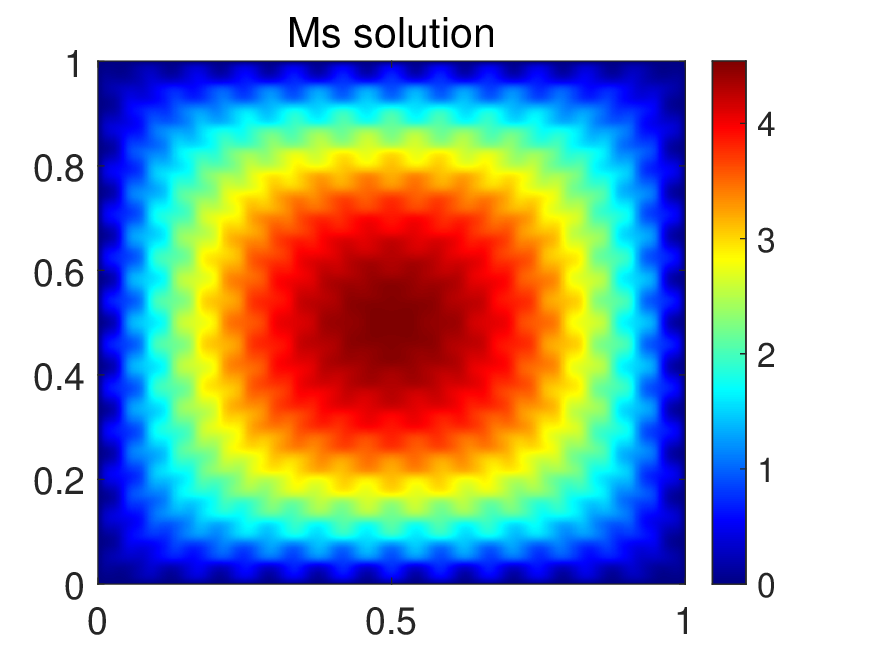}}
	\caption{Comparison of reference solution $u_{\text{ref}}$ with $u_{\text{ms},\ell}$ from \cref{algorithm:wavelet} with $H=\sqrt{2}/16$ and $\ell=1$ for Example 1 with $(\beta,k)=(2,24)$.}
	\label{fig:cv2d_24_2}
\end{figure}
\begin{table}[htbp]
	\centering
		\begin{adjustbox}{max width=\textwidth}
		\begin{tabular}{|c|c|c|c|c|c|c|c|c|c|c|c|}
			\hline
			\multirow{2}{*}{$H$}&\multirow{2}{*}{$\PeH$} & \multicolumn{2}{c|}{$\ell=0$} & \multicolumn{2}{c|}{$\ell=1$} & \multicolumn{2}{c|}{$\ell=2$}& \multicolumn{2}{c|}{FEM scheme \eqref{eqn:weakform_h}}& \multicolumn{2}{c|}{SUPG scheme \eqref{eq:SUPG}}\tabularnewline
			\cline{3-12}
			&& \specialcell{$e_{L^2}$} & \specialcell{$e_{H^1}$} & \specialcell{$e_{L^2}$} & \specialcell{$e_{H^1}$} & \specialcell{$e_{L^2}$} & \specialcell{$e_{H^1}$}&\specialcell{$e_{L^2}$} & \specialcell{$e_{H^1}$}&\specialcell{$e_{L^2}$} & \specialcell{$e_{H^1}$}\tabularnewline
			\hline
			$\sqrt{2}/8$&200&0.91\%& 3.79\%&0.32\%&1.09\%&0.30\%&0.91\% &141.0 \% & 119.7\%& 85.77\%& 94.02\%\tabularnewline
			\hline
			$\sqrt{2}/16$ &100&1.20\%&3.46\%&1.22\%&2.48\%&0.83\%&1.73\% & 143.3\% & 120.0\%& 73.40\%& 89.60\% \tabularnewline
			\hline
			$\sqrt{2}/32$&50&2.18\%& 5.47\%&0.82\%&2.58\%&0.37\%&1.43\% & 142.9\% & 119.8\%& 52.48\%& 83.35\%  \tabularnewline
			\hline
			$\sqrt{2}/64$&25&1.09\%& 8.62\%&0.05\%&3.57\%&0.013\%&0.74\%& 15.33\% & 77.39\%& 32.09\%& 71.46\%  \tabularnewline			
			\hline
		\end{tabular}
		\end{adjustbox}
\caption{Errors provided by \cref{algorithm:wavelet}, \eqref{eqn:weakform_h} and \eqref{eq:SUPG} for Example 1 with $(\textcolor{black}{\beta},k)=(8,48)$.}
	\label{ta:cv1_48b}
\end{table}
\subsection*{Example 2}
This example is adopted from \cite{calo2016multiscale}. We take $\epsilon=10^{-3}$ and define the velocity to be
\begin{align*}
\bm{b}:=\left[-\frac{\partial{g}}{\partial y}, \frac{\partial{g}}{\partial x}\right]^T \text{ with }
g(x,y):=\frac{1}{60\pi}\sin(5\pi x)\sin(6\pi y)+\frac{1}{\noteLi{200}}(x+y).
\end{align*}
\begin{figure}[htbp]
	\centering
	\subfigure[Reference FEM solution]{
		\includegraphics[trim={0cm 0 0cm 0},clip,width=2.3in]{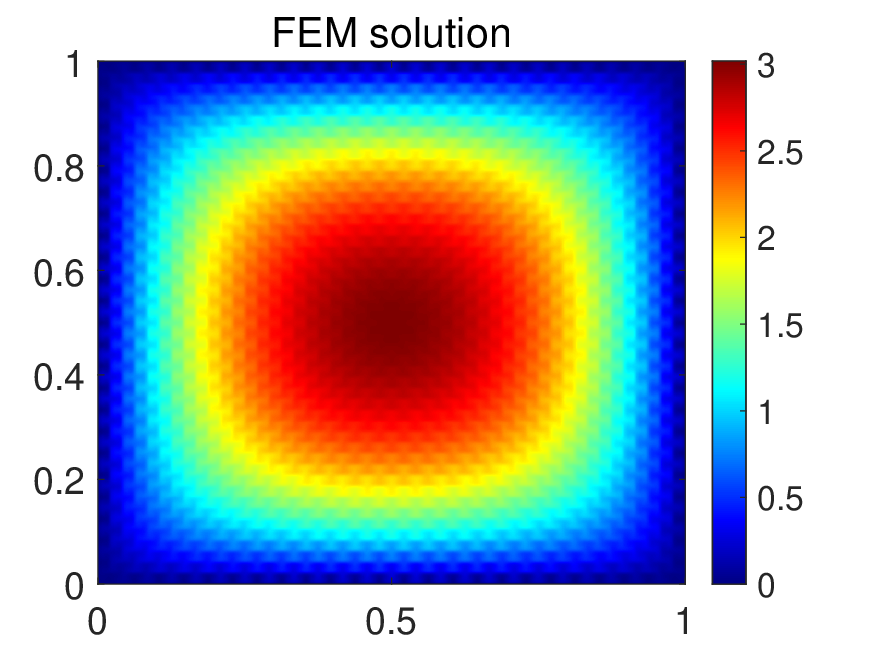}}
	\subfigure[WEMsFEM solution]{
		\includegraphics[trim={0cm 0 0cm 0},clip,width=2.3in]{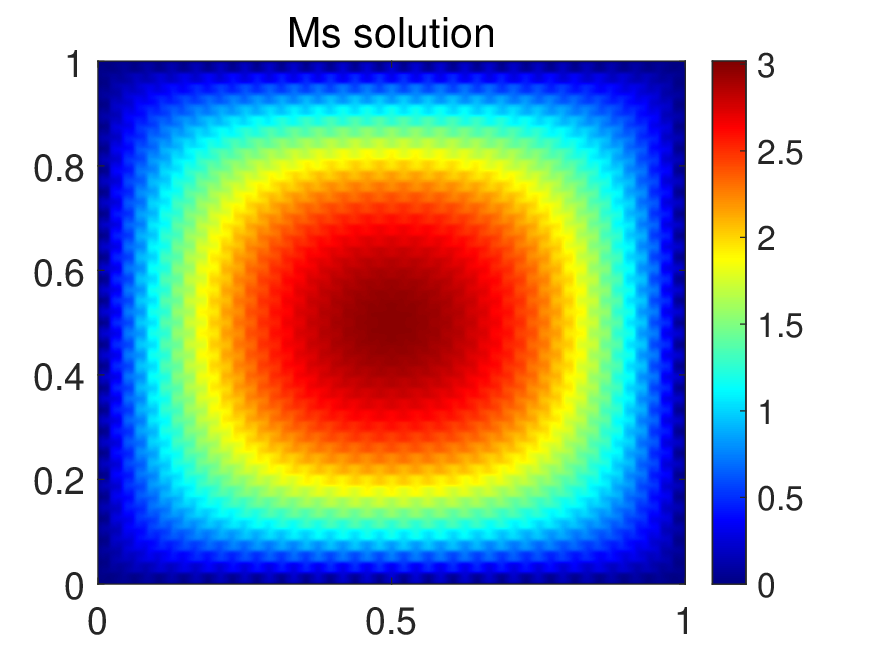}}
	\caption{Comparison of reference solution $u_{\text{ref}}$ with $u_{\text{ms},\ell}$ from \cref{algorithm:wavelet} with $H:=\sqrt{2}/16$ and $\ell:=1$ for Example 1 with $(\beta,k):=(8,48)$.}
	\label{fig:cv2d_48_8}
\end{figure}

\begin{table}[htbp]
	\centering
	\begin{adjustbox}{max width=\textwidth}
		\begin{tabular}{|c|c|c|c|c|c|c|c|c|c|c|c|}
			\hline
			\multirow{2}{*}{$H$}&\multirow{2}{*}{$\PeH$} & \multicolumn{2}{c|}{$\ell=0$} & \multicolumn{2}{c|}{$\ell=1$} & \multicolumn{2}{c|}{$\ell=2$}& \multicolumn{2}{c|}{FEM scheme \eqref{eqn:weakform_h}}& \multicolumn{2}{c|}{SUPG scheme \eqref{eq:SUPG}}\tabularnewline
			\cline{3-12}
			&& \specialcell{$e_{L^2}$} & \specialcell{$e_{H^1}$} & \specialcell{$e_{L^2}$} & \specialcell{$e_{H^1}$} & \specialcell{$e_{L^2}$} & \specialcell{$e_{H^1}$}&\specialcell{$e_{L^2}$} & \specialcell{$e_{H^1}$}&\specialcell{$e_{L^2}$} & \specialcell{$e_{H^1}$}\tabularnewline
			\hline
			$\sqrt{2}/8$ &26.25&3.20\% &7.93\% &0.19\%&2.07\% &0.03\%&0.42\%  & 12.18\%& 49.09\%& 16.23\%& 44.86\% \tabularnewline
			\hline
			$\sqrt{2}/16$ &13.12&0.61\% &6.01\% &0.05\%&1.37\% &0.0054\%&0.22\%   & 1.76\%& 23.06\%& 3.44\%& 22.88\%\tabularnewline
			\hline
			$\sqrt{2}/32$&6.56&0.10\% &2.91\% &0.0090\%&0.55\% &0.0011\%&0.10\%  & 0.42\%& 11.24\%& 0.78\%& 11.23\%\tabularnewline
			\hline
			$\sqrt{2}/64$&3.28&0.02\% &1.14\% &0.0017\%&0.20\% &0.0002\%&0.04\%  &  0.10\%& 5.62\%& 0.19\%& 5.62\% \tabularnewline		
			\hline
		\end{tabular}
\end{adjustbox}
\caption{Errors provided by \cref{algorithm:wavelet}, \eqref{eqn:weakform_h} and \eqref{eq:SUPG} for Example 2.}
	\label{ta:cv2}
\end{table}

\begin{figure}[htbp]
	\centering
	\subfigure[$u_{\text{ref}}$]{
		\includegraphics[trim={0cm 0 0cm 0},clip,width=2.3in]{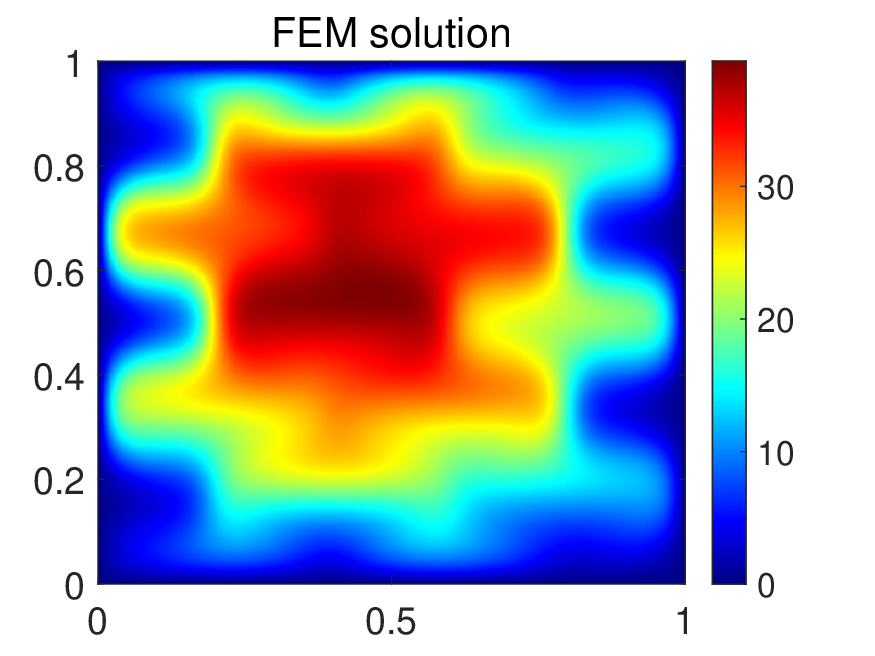}}
	\subfigure[$u_{\text{ms},\ell}$]{
		\includegraphics[trim={0cm 0 0cm 0},clip,width=2.3in]{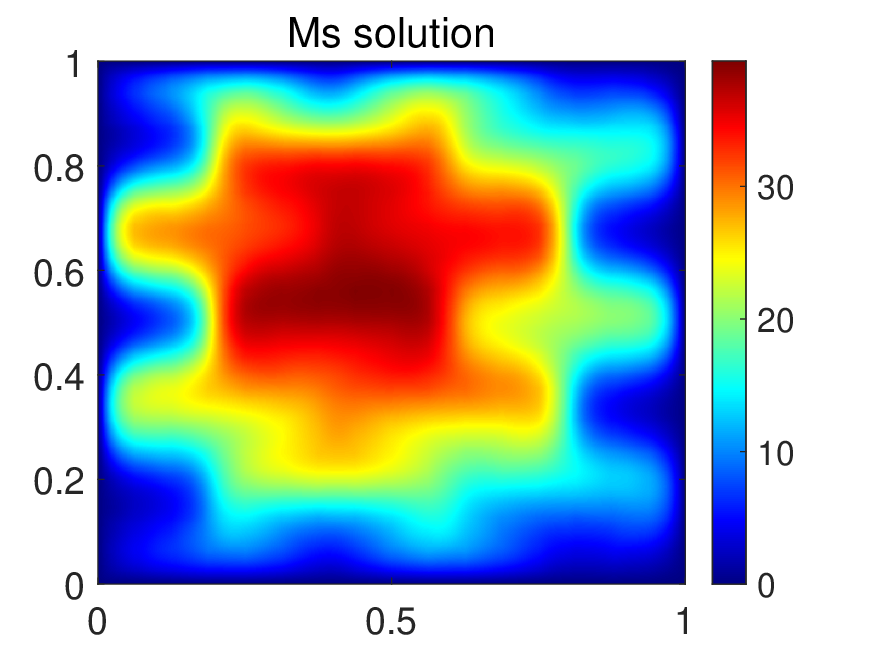}}
	\caption{Comparison of reference solution $u_{\text{ref}}$ with $u_{\text{ms},\ell}$ from Algorithm \ref{algorithm:wavelet} with $H:=\sqrt{2}/16$ and $\ell=1$ for Example 2.}
	\label{fig:ex2}
\end{figure}
This velocity field also has a cellular structure with eddies and channels as in  Example 1. The convergence of $u_{\text{ms},\ell}$ with different coarse-grid size $H$ and level parameter $\ell$ is reported in \cref{ta:cv2}. Convergence with respect to the level parameter $\ell$ can be observed in this example, we also observe convergence with respect to $H$. Surprisingly, \cref{algorithm:wavelet} with $\ell=0$ and $H=\sqrt{2}/8$ can deliver an accurate solution. The reference solution $u_{\text{ref}}$ and multiscale solution $u_{\text{ms},\ell}$ with $H:=\sqrt{2}/16$ and $\ell=1$ are displayed in \cref{fig:ex2}, which confirm the data in \cref{ta:cv2}. It can also be seen that $u_{\text{ms},\ell}$ with $H:=\sqrt{2}/16$ and $\ell=0$ can capture the microscale feature of the reference solution. For this example, the performance of \eqref{eqn:weakform_h} and \eqref{eq:SUPG} is not bad since the solution is relatively smooth.
\subsection*{Example 3}
\begin{table}[htbp]
	\centering
	\begin{adjustbox}{max width=\textwidth}
		\begin{tabular}{|c|c|c|c|c|c|c|c|c|c|c|c|}
			\hline
			\multirow{2}{*}{$H$}&\multirow{2}{*}{$\PeH$} & \multicolumn{2}{c|}{$\ell=0$} & \multicolumn{2}{c|}{$\ell=1$} & \multicolumn{2}{c|}{$\ell=2$}& \multicolumn{2}{c|}{FEM scheme \eqref{eqn:weakform_h}}& \multicolumn{2}{c|}{ SUPG scheme \eqref{eq:SUPG}}\tabularnewline
			\cline{3-12}
			&& \specialcell{$e_{L^2}$} & \specialcell{$e_{H^1}$} & \specialcell{$e_{L^2}$} & \specialcell{$e_{H^1}$} & \specialcell{$e_{L^2}$} & \specialcell{$e_{H^1}$}&\specialcell{$e_{L^2}$} & \specialcell{$e_{H^1}$}&\specialcell{$e_{L^2}$} & \specialcell{$e_{H^1}$}\tabularnewline
			\hline
			$\sqrt{2}/8$ &50.0&0.47\% &2.67\% &0.04\%&0.68\% &0.02\%&0.45\% & 39.83\%& 62.22\%& 71.57\%& 80.74\%  \tabularnewline
			\hline
			$\sqrt{2}/16$&25.0&0.16\% &1.95\% &0.03\%&1.31\% &0.02\%&1.10\%  & 41.20\%& 64.21\%& 51.37\%& 68.92\%  \tabularnewline
			\hline
			$\sqrt{2}/32$&12.50&0.25\% &4.48\% &0.08\%&2.49\% &0.01\%&0.54\% &36.83\%& 60.28\%& 23.09\%& 55.27\%\tabularnewline
			\hline
			$\sqrt{2}/64$&6.25&0.56\% &7.10\% &0.02\%&1.42\% &0.0014\%&0.27\% &12.55\%& 35.37\%& 3.15\%& 33.60\%  \tabularnewline		
			\hline
		\end{tabular}
	\end{adjustbox}
\caption{Errors provided by \cref{algorithm:wavelet}, \eqref{eqn:weakform_h} and \eqref{eq:SUPG} for Example 3.}
	\label{ta:cv3}
\end{table}

A channelized flow field without eddies is considered with $\epsilon:=1$ and 
$
\bm{b}:=200[\sin(48\pi y),0]^T.
$
The numerical results are presented in \cref{ta:cv3}, and similar convergence behavior as Example 1 is observed. We depict the reference solution $u_{\text{ref}}$ and $u_{\text{ms},\ell}$ from \cref{algorithm:wavelet} with $H:=\sqrt{2}/16$, $\ell=0$ in \cref{fig:ex3}. The errors displayed in \cref{ta:cv3} verify that  \cref{algorithm:wavelet} can provide an accurate solution.
In particular, even if the level parameter $\ell=0$, relative $L^2(D)$-errors and semi-$H^1(D)$-errors are all below $0.56\%$ and $7.10\%$, respectively.
Moreover, we observe that the accuracy of \eqref{eqn:weakform_h} and \eqref{eq:SUPG} for this case are not satisfactory, with relative semi-$H^1(D)$-errors above $30\%$ for both methods.

\begin{figure}[htbp]
	\centering
	\subfigure[$u_{\text{ref}}$]{
		\includegraphics[trim={0cm 0 0cm 0},clip,width=2.3in]{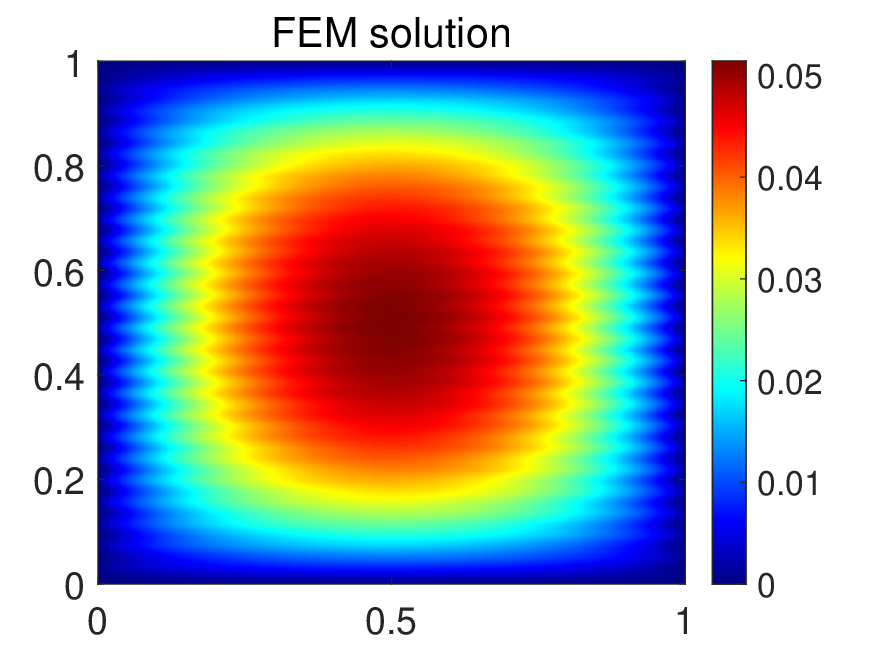}}
	\subfigure[$u_{\text{ms},\ell}$]{
		\includegraphics[trim={0cm 0 0cm 0},clip,width=2.3in]{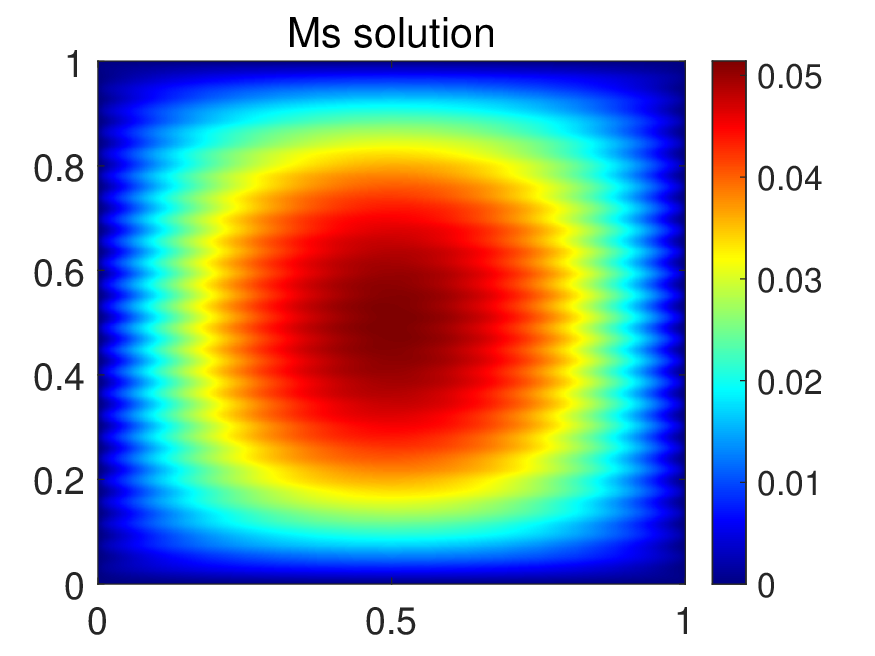}}
	\caption{Comparison of the reference solution $u_{\text{ref}}$ with $u_{\text{ms},\ell}$ from \cref{algorithm:wavelet} with $H:=\sqrt{2}/16$ and $\ell=0$ for Example 3.}
	\label{fig:ex3}
\end{figure}
\begin{table}[htbp]
	\centering
		\begin{adjustbox}{max width=\textwidth}
		\begin{tabular}{|c|c|c|c|c|c|c|c|c|c|c|c|}
			\hline
			\multirow{2}{*}{$H$}&\multirow{2}{*}{$\PeH$} & \multicolumn{2}{c|}{$\ell=0$} & \multicolumn{2}{c|}{$\ell=1$} & \multicolumn{2}{c|}{$\ell=2$}& \multicolumn{2}{c|}{FEM scheme \eqref{eqn:weakform_h}}& \multicolumn{2}{c|}{SUPG scheme \eqref{eq:SUPG}}\tabularnewline
			\cline{3-12}
			&& \specialcell{$e_{L^2}$} & \specialcell{$e_{H^1}$} & \specialcell{$e_{L^2}$} & \specialcell{$e_{H^1}$} & \specialcell{$e_{L^2}$} & \specialcell{$e_{H^1}$}&\specialcell{$e_{L^2}$} & \specialcell{$e_{H^1}$}&\specialcell{$e_{L^2}$} & \specialcell{$e_{H^1}$}\tabularnewline
			\hline
			$\sqrt{2}/8$ &32&0.74\% & 4.66\% & 0.14\% & 1.43\% & 0.04\% & 0.52\% &53.57\%&119.4\%& 32.88\%&91.48\% \tabularnewline
			\hline
			$\sqrt{2}/16$ &16&0.41\% & 4.73\% & 0.11\% & 1.55\% & 0.02\% & 0.38\% &22.5\%&99.44\%& 20.00\%& 82.37\%\tabularnewline
			\hline
			$\sqrt{2}/32$&8&0.18\% & 3.49\% & 0.04\% & 0.97\% & 0.0047\% & 0.20\% &8.88\%&72.93\%&10.75\%& 65.74\%\tabularnewline
			\hline
			$\sqrt{2}/64$&4& 0.06\% & 2.06\% & 0.0085\% & 0.41\% & 0.0010\% & 0.08\% &2.93\%&45.38\%& 3.66\%& 42.98\%\tabularnewline		
			\hline
		\end{tabular}
		\end{adjustbox}
\caption{Errors provided by \cref{algorithm:wavelet}, \eqref{eqn:weakform_h} and \eqref{eq:SUPG} for Example 4 with $\epsilon_1=1$.}
	\label{ta:cv4a}
\end{table}	

\subsection*{Example 4} 	
\begin{figure}[htbp]
	\centering
	\subfigure[$u_{\text{ref}}$]{
		\includegraphics[trim={0cm 0 0cm 0},clip,width=2.3in]{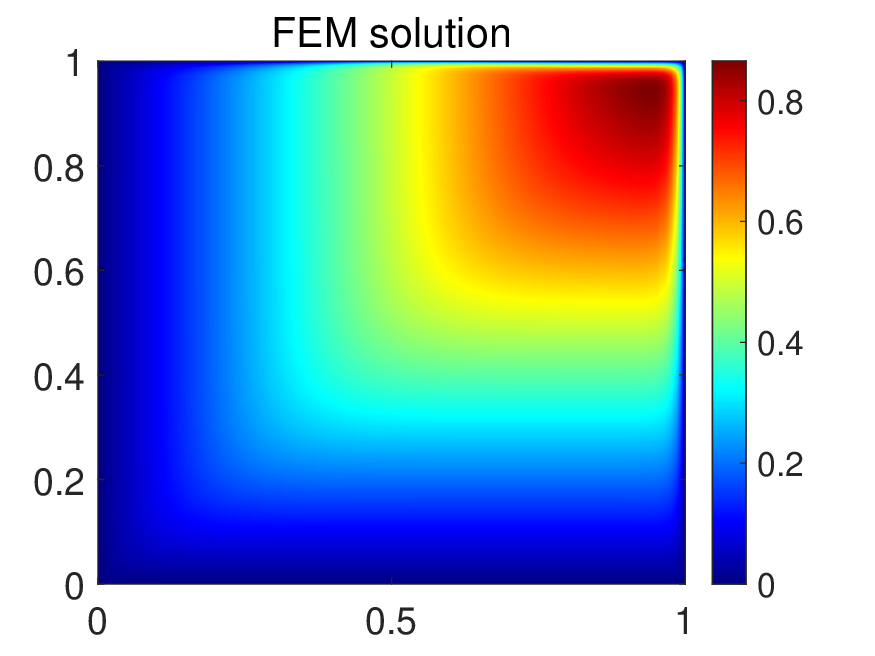}}
	\subfigure[$u_{\text{ms},\ell}$]{
		\includegraphics[trim={0cm 0 0cm 0},clip,width=2.3in]{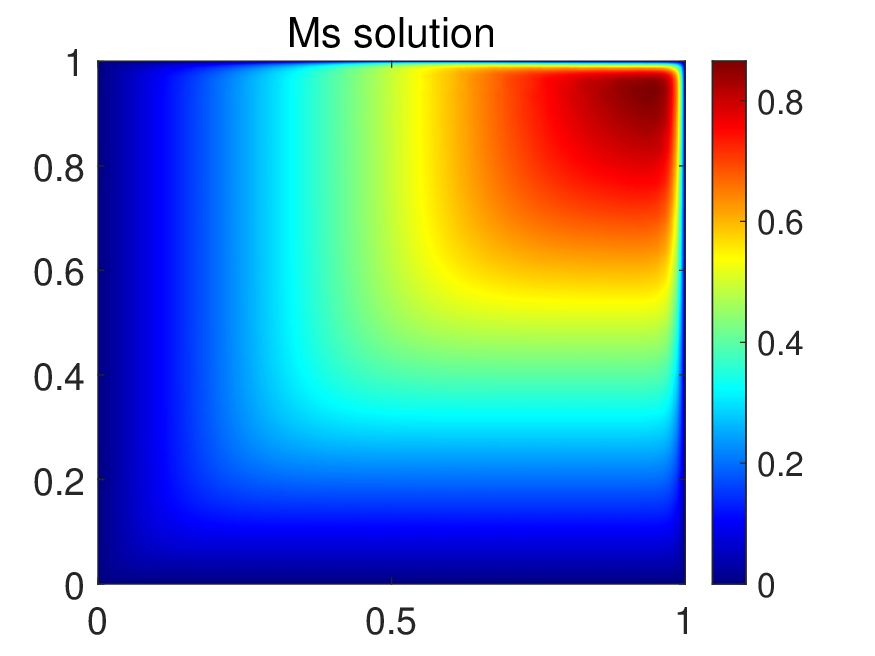}}
	\caption{Comparison of reference solution $u_{\text{ref}}$ with $u_{\text{ms},\ell}$ from  \cref{algorithm:wavelet} with $H:=\sqrt{2}/16$ and $\ell=1$ for Example 4 with $\epsilon_1=1$.}
	\label{fig:ex4a}
\end{figure}

This example is adopted from \cite{le2017numerical}. In particular, we consider a diffusion coefficient with multiple scales, such that 
we aim to solve 
\begin{align*}
-\nabla\cdot(A(x,y)\nabla u)+\bm{b}\cdot\nabla u=f
\end{align*} 
with the diffusion coefficient $A(x,y)$ being
\begin{eqnarray*}
	A(x,y)=\epsilon\bigg(1+0.5\cos\bigg(\frac{2\pi}{\epsilon_1}x\bigg)\bigg).
\end{eqnarray*}
Here, the oscillation parameter $\epsilon:=\frac{1}{128}$ and we take a constant velocity $\bm{b}:=(1,1)^T$. To guarantee the accuracy of reference solution $u_{\text{ref}}$, the fine scale mesh size $h$ is set to be $\sqrt{2}/2^{12}$.  

\begin{table}[htbp]
	\centering
	\begin{adjustbox}{max width=\textwidth}
		\begin{tabular}{|c|c|c|c|c|c|c|c|c|c|c|c|}
			\hline
			\multirow{2}{*}{$H$}&\multirow{2}{*}{$\PeH$} & \multicolumn{2}{c|}{$\ell=0$} & \multicolumn{2}{c|}{$\ell=1$} & \multicolumn{2}{c|}{$\ell=2$}& \multicolumn{2}{c|}{FEM scheme \eqref{eqn:weakform_h}} & \multicolumn{2}{c|}{SUPG scheme \eqref{eq:SUPG}}\tabularnewline
			\cline{3-12}
			&& \specialcell{$e_{L^2}$} & \specialcell{$e_{H^1}$} & \specialcell{$e_{L^2}$} & \specialcell{$e_{H^1}$} & \specialcell{$e_{L^2}$} & \specialcell{$e_{H^1}$} & \specialcell{$e_{L^2}$} & \specialcell{$e_{H^1}$} & \specialcell{$e_{L^2}$} & \specialcell{$e_{H^1}$}\tabularnewline
			\hline
			$\sqrt{2}/8$ &32&0.85\% & 5.33\% & 0.18\% & 1.82\% &  0.05\% & 0.77\%  & 54.84\%&  \color{black}{123.85}\%& 34.15\%& 93.66\%\tabularnewline
			\hline
			$\sqrt{2}/16$ &16&0.54\% & 6.83\% & 0.13\% & 2.08\% &  0.02\% & 0.52\%  &25.04\%&107.68\%&21.46\%&86.96\%   \tabularnewline
			\hline
			$\sqrt{2}/32$&8&0.40\% & 8.27\% & 0.08\% & 1.97\%  & 0.0075\% & 0.36\%  &10.05\%&83.79\%&12.00\%&73.47\% \tabularnewline
			\hline
			$\sqrt{2}/64$&4&0.21\% & 6.51\% & 0.02\% & 1.37\%  & 0.0021\% & 0.16\%  &3.11\%&53.82\%&4.30\%&51.16\%  \tabularnewline		
			\hline
		\end{tabular}
		\end{adjustbox}
\caption{Errors provided by \cref{algorithm:wavelet}, \eqref{eqn:weakform_h} and \eqref{eq:SUPG} for Example 4 with $\epsilon_1=1/64$.}
	\label{ta:cv4b}
\end{table}		
If $\epsilon_1=1/64$ and $H=\noteLi{\sqrt{2}}/16$, \noteLi{then the relative $L^2(D)$-error and the relative $H^1(D)$-error for Stab-MsFEM proposed in \cite{le2017numerical} are 23\% and 87\%, which are not reliable, especially near the boundary layer.} {{To make our comparison relevant, we implement Stab-MsFEM and obtain that the relative $L^2(D)$-error and the relative $H^1(D)$-error are \noteLg{\textcolor{black}{22}\% and 87\%}, which agree with the data reported in \cite{le2017numerical}. Remarkably, Tables \ref{ta:cv4a} and \ref{ta:cv4b} clearly demonstrate that Algorithm \ref{algorithm:wavelet} yields more accuracy even with the level parameter $\ell=0$, both near boundary layer and away from the boundary layer. \noteLi{In particular, if $\epsilon_1=1/64$, $H=\sqrt{2}/16$ and $\ell=0$, the relative $L^2(D)$-error and the relative semi-$H^1(D)$- error are $0.54\%$ and $6.83\%$, respectively}. 
\begin{figure}[htbp]
	\centering
	\subfigure[$u_{\text{ref}}$]{
		\includegraphics[trim={0cm 0 0cm 0},clip,width=2.3in]{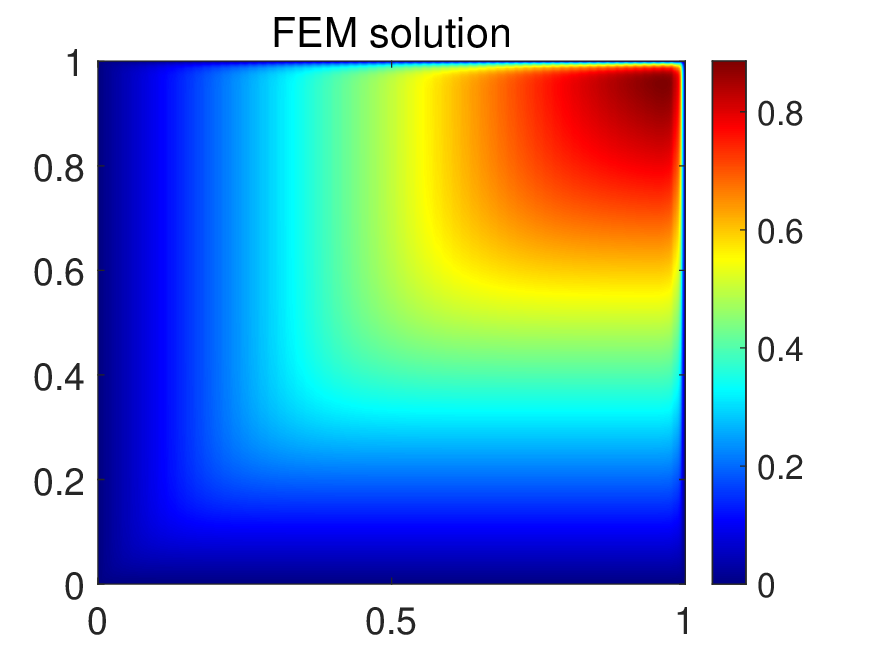}}
	\subfigure[$u_{\text{ms},\ell}$]{
		\includegraphics[trim={0cm 0 0cm 0},clip,width=2.3in]{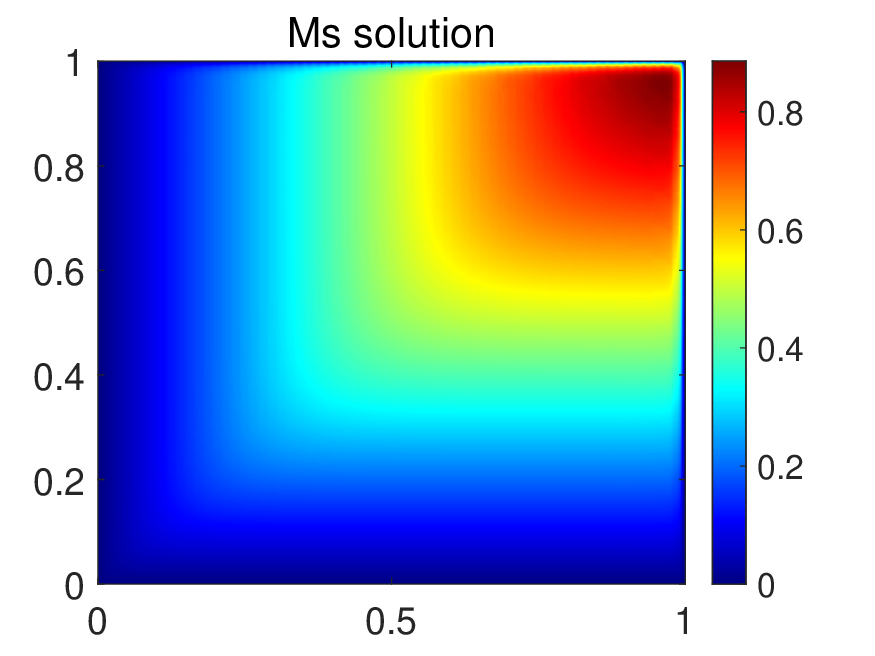}}
	\caption{Comparison of reference solution $u_{\text{ref}}$ with $u_{\text{ms},\ell}$ from  \cref{algorithm:wavelet} with $H:=\sqrt{2}/16$ and $\ell=1$ for Example 4 with $\epsilon_1=1/64$.}
	\label{fig:ex4b}
\end{figure}
Aligning with \cite{le2017numerical}, we further split the relative semi-$H^1(D)$-error into its layer contribution and out of the layer contribution, by defining the region for the layers as
$D_{\text{layer}}:=\left((0,1)\times(1-\delta_{\text{layer}},1)\right)\cup \left((1-\delta_{\text{layer}},1)\times (0,1)\right)
$. Here, we take $\delta_{\text{layer}}={2}/\text{Pe} \log(\text{Pe}/2)$. Then
the relative semi-$H^1_{\text{in}}$-error and relative semi-$H^1_{\text{out}}$-error are 
\begin{equation*}
e_{H^1_{\text{in}}}:=\workLi{\frac{\|\nabla (u_{\text{ms},\ell}- u_{\text{ref}})\|_{L^2(D_{\text{layer}})}}{\|\nabla u_{\text{ref}}\|_{L^2(D)}}}\quad\text{and}\quad
	e_{H^1_{\text{out}}}:=\workLi{\frac{\|\nabla (u_{\text{ms},\ell}- u_{\text{ref}})\|_{L^2(D\backslash D_{\text{layer}})}}{\|\nabla u_{\text{ref}}\|_{L^2(D)}}}.
\end{equation*}
This definition indicates the identity $e_{H^1}^2=e_{H^1_{\text{in}}}^2 +e_{H^1_{\text{out}}}^2$, i.e., the relative semi-$H^1(D)$-error is an upper bound for the relative semi-$H^1_{\text{in}}$-error and relative semi-$H^1_{\text{out}}$-error.

If $\epsilon_1=1/64$, $H=\sqrt{2}/16$ and $\ell=0$, then $e_{H^1_{\text{in}}}$ and $e_{H^1_{\text{out}}}$ are $6.5\%$ and $1.7\%$, respectively. \noteLg{In contrast, they are 87\% and 4\% for Stab-MsFEM proposed in \cite{le2017numerical} under the relative $H^1(D)$-error instead of relative semi-$H^1(D)$-error, which has a very slight difference in our test.} This clearly demonstrates that our proposed algorithm maintains high approximation property both in the layers and out of the layers.

Furthermore, we depict in \cref{fig:ex4a} and \cref{fig:ex4b} the reference solution $u_{\text{ref}}$ and multiscale solution $u_{\text{ms},\ell}$ with $H:=\sqrt{2}/16$ and \workLi{$\ell=1$}. We observe that the multiscale solution $u_{\text{ms},\ell}$ with $H:=\sqrt{2}/16$ and $\ell=1$ can capture the boundary layer accurately.
\subsection*{Example 5}
Example 5 is a 3-d example with $\epsilon:=1/16$, and
the velocity field being 
\[
\bm{b}:=[g_2-g_3,g_3-g_1,g_1-g_2]^T \text{ with }
\]
\begin{align*}
	g_1:=&\beta k \cos(kx)\sin(ky)\sin(kz), \\
	g_2:=&\textcolor{black}{\beta} k \sin(kx)\cos(ky)\sin(kz), \\
	g_3:=&\textcolor{black}{\beta} k \sin(kx)\sin(ky)\cos(kz).
\end{align*}
Different combinations of $(\beta,k)$ are tested and simulation results are reported in \cref{ta:cv5}. We only test  \cref{algorithm:wavelet} with $\ell=0$ due to the huge computational complexity involved for $d=3$. To adapt \cref{algorithm:wavelet} for $d=3$ and $\ell=0$, we need to define the coarse neighborhood and hierarchical bases up to $\ell=0$. 
The coarse neighborhood $\omega_i$ for each coarse node $i$ is a cube with this coarse node as its center and the hierarchical bases corresponds to level $\ell=0$ are eight nodal basis functions for $\omega_i$.
\begin{table}[htbp]
	\centering
	\begin{adjustbox}{max width=\textwidth}
		\begin{tabular}{|c|c|c|c|c|c|c|c|c|c|c|c|}\hline
			\multirow{2}{*}{	$(\textcolor{black}{\beta},k)$} & \multicolumn{4}{c|}{$H=\noteLi{\sqrt{3}}/{8}$} & \multicolumn{4}{c|}{$H=\noteLi{\sqrt{3}}/{16}$} \tabularnewline
			\cline{2-9}
			&$\PeH$& \specialcell{$e_{L^2}$} & \specialcell{$e_{H^1}$}  & $T_{\text{solve}}$ &$\PeH$& \specialcell{$e_{L^2}$} & \specialcell{$e_{H^1}$}  & $T_{\text{solve}}$  \tabularnewline\hline
			(1,4$\pi$)&61.49& 0.92\%  &7.2\% &0.8&30.75&0.29\%& 3.45\%&21.5  \tabularnewline\hline
			(1,8$\pi$)&122.98&1.38\%  &6.05\% &0.8&61.49&0.92\%& 11.06\%&22.3  \tabularnewline\hline			
			(4,4$\pi$)&245.95 & 3.72\% &21.36\% &0.8&122.98&1.55\%& 13.69\%&22.1 \tabularnewline\hline
			(4,8$\pi$)&491.9& 2.11\% &7.33\% &  0.8& 245.95 &3.71\%& 31.66\%&22.5  \tabularnewline\hline
		\end{tabular}
	\end{adjustbox}
	\caption{Errors provided by \cref{algorithm:wavelet} and CPU time in seconds for solving linear system in Example 5.}
	\label{ta:cv5}
\end{table}
\begin{figure}[htbp]
	\centering
	\includegraphics[trim={2cm 1.7cm 1.6cm 1.8cm},clip,width=3.3in]{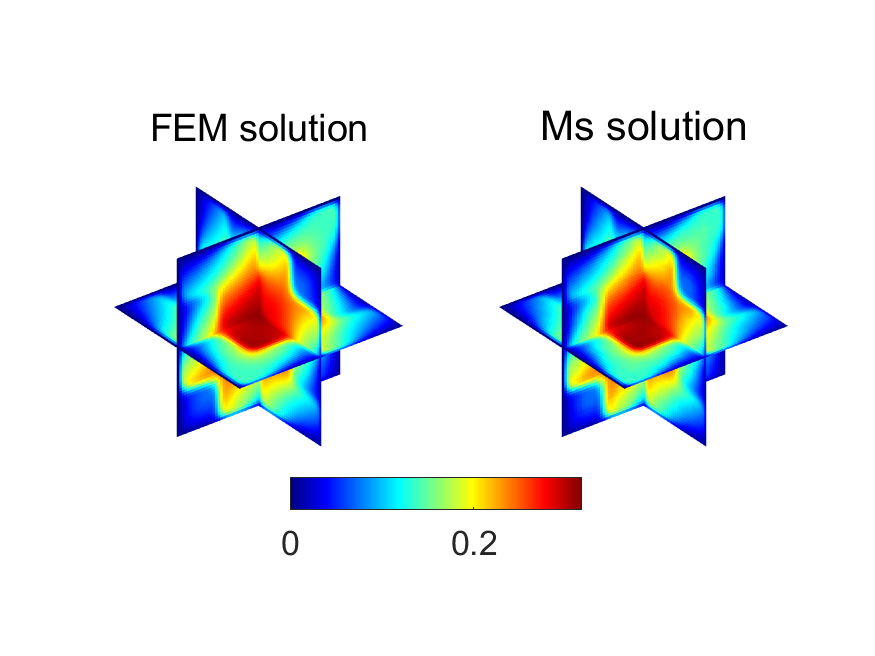}			
	\caption{Comparison of reference solution $u_{\text{ref}}$ with $u_{\text{ms},\ell}$ from \cref{algorithm:wavelet} with $\ell=0$ for Example 5 with $(\beta,k)=(1,4\pi)$.}
	\label{fig:cv3d_41}
\end{figure}

\begin{figure}[htbp]
	\centering
	\includegraphics[trim={2cm 1.7cm 1.6cm 1.8cm},clip,width=3.3in]{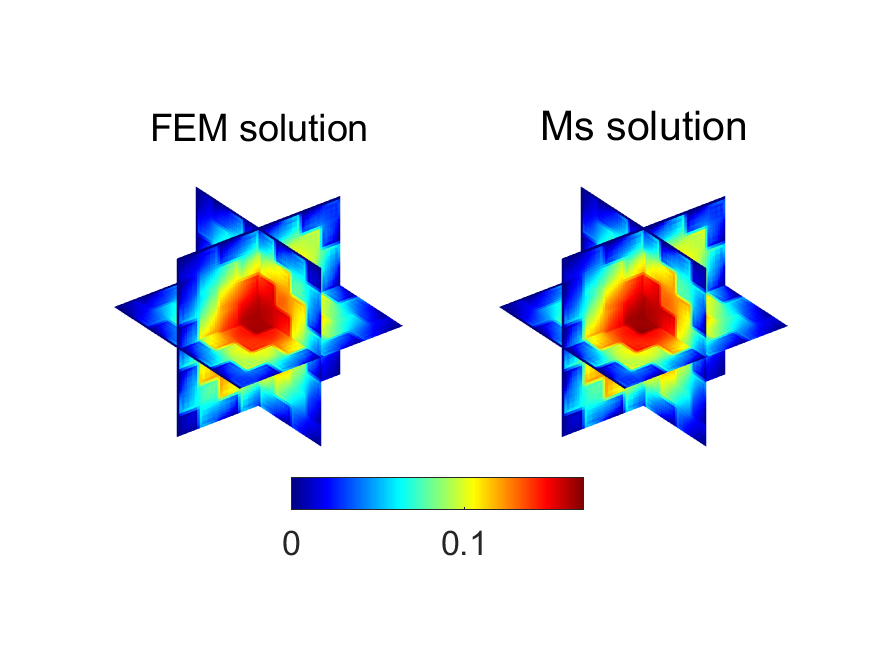}			
	\caption{Comparison of reference solution $u_{\text{ref}}$ with $u_{\text{ms},\ell}$ from  \cref{algorithm:wavelet} with $\ell=0$ for Example 5 with $(\beta,k)=(4,8\pi)$.}
	\label{fig:cv3d_84}
\end{figure}
We can see the performance of \cref{algorithm:wavelet} can provide an accurate solution in most cases. For example, the $L^2(D)$-relative error is merely $3.17\%$ when $H=\sqrt{3}/{8}$ and $(\beta,k)=(4,8\pi)$. 
Moreover, \cref{fig:cv3d_41} and \cref{fig:cv3d_84} demonstrate that \cref{algorithm:wavelet} with $\ell=0$ can capture the microscale feature of the reference solution. We note that the CPU time for applying a
direct solver for solving the linear system resulting from FEM is about 147 seconds regardless of $\alpha$ and $k$. In comparison, the CPU time for \cref{algorithm:wavelet} are about 0.8 seconds and 22 seconds for $H=\sqrt{3}/{8}$ and $H=\sqrt{3}/{16}$ respectively, and hence huge computational cost is saved.

\subsection{Sensitivity with respect to the mesh P\'eclet number}

Next, we set the level parameter $\ell:=0$ and coarse mesh size $H:=\sqrt{2}/16$ in \cref{algorithm:wavelet} and test it with different values of oscillation parameter $\epsilon$ for Examples 1-4 to study its
robustness with respect to the mesh P\'eclet number $\PeH$. Experiments results are provided in \cref{fig:peh}. It is observed relative errors generally increase as mesh P\'eclet number $\PeH$ becomes larger.
However, for examples 2-4, relative errors are quite stable for $\PeH=\mathcal{O}(10)$. For example 1, although
accuracy of \cref{algorithm:wavelet} have a relative stronger dependence on the mesh P\'eclet number $\PeH$, the relative errors are still very small even if $\PeH$ is as large as 250 despite $\ell=0$.

		\begin{figure}[htbp]
		\centering
		\includegraphics[trim={17cm 4cm 18cm 2cm},clip,width=.75\textwidth]{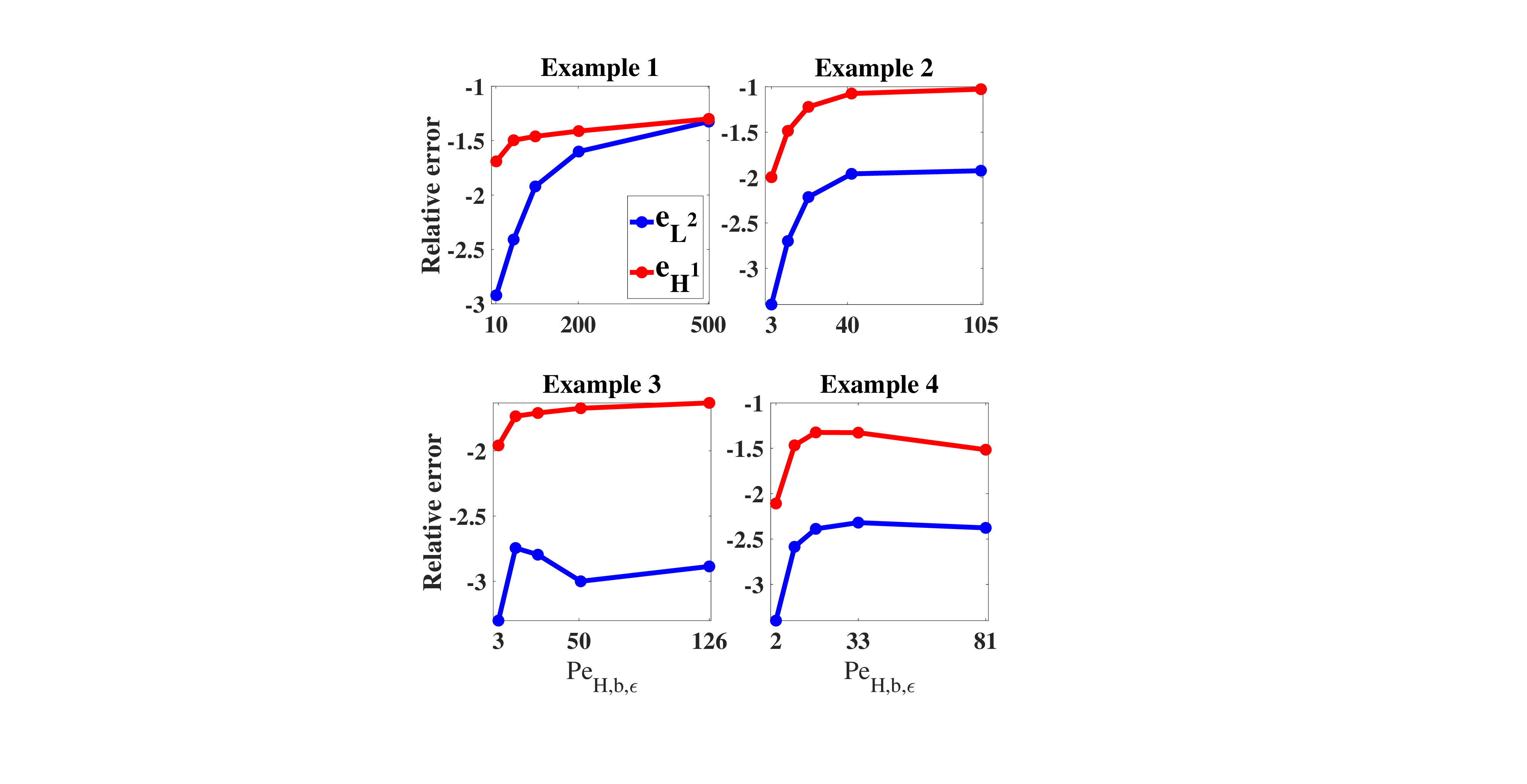}
		\caption{{Performance of \cref{algorithm:wavelet} with $\ell=0$ and $H=\sqrt{2}/16$ against the mesh P\'eclet number $\PeH$. Here, $(\textcolor{black}{\beta},k)=(8,48)$ in Example 1 and $\epsilon_1:=1/64$ in Example 4.}}
		\label{fig:peh}
	\end{figure}
\subsection{Accuracy with respect to the wavelet level $\ell$}
Finally, we investigate the performance of \cref{algorithm:wavelet} with respect to the level parameter $\ell$ by Examples 1-4. The results are depicted in \cref{fig:l}, and exponential decay rate is observed.
		\begin{figure}[htbp]
	\centering
	\includegraphics[trim={16cm 2cm 16cm 2cm},clip,width=0.75\textwidth]{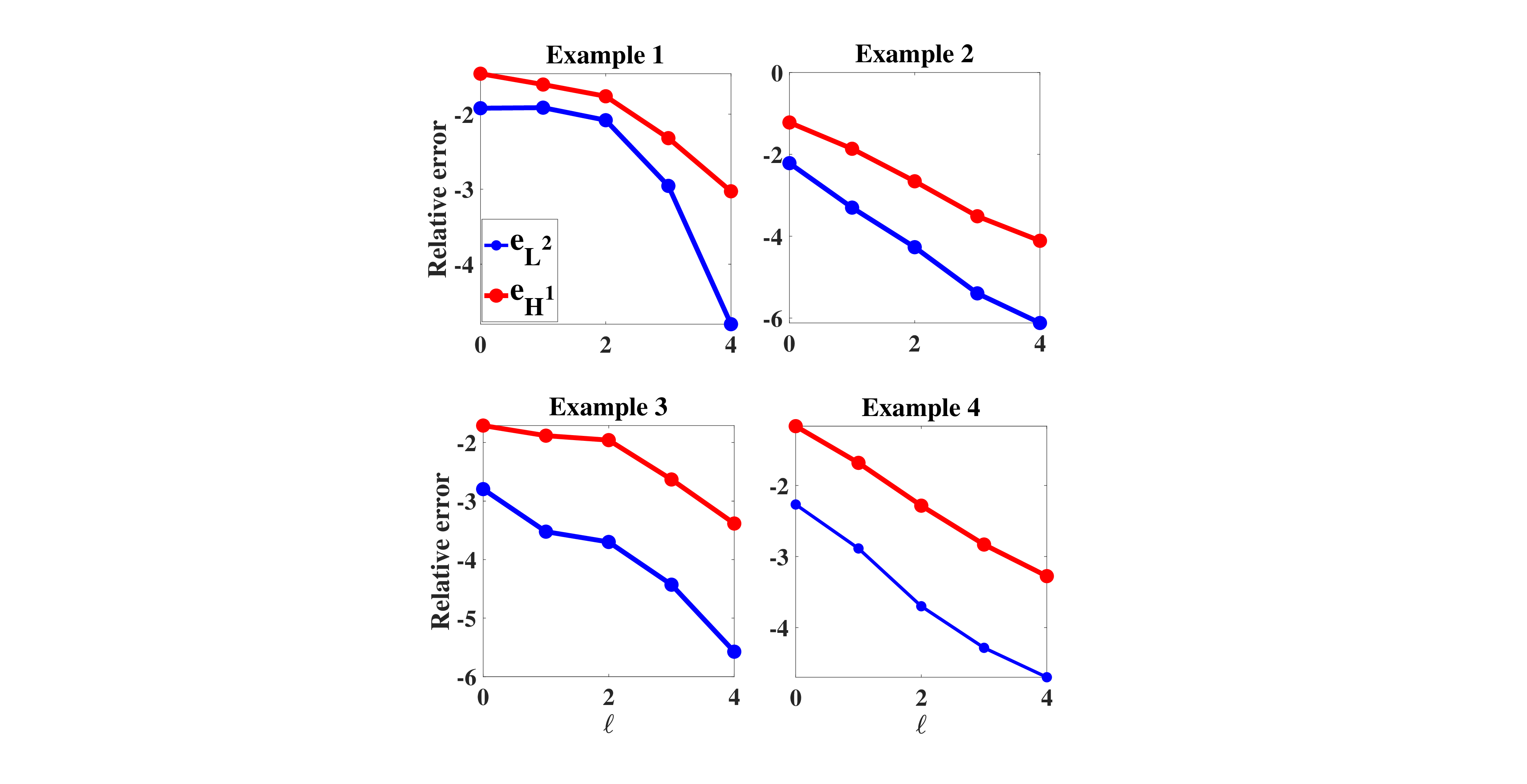}
	\caption{\textcolor{black}{{Performance of \cref{algorithm:wavelet} with $H=\sqrt{2}/16$ against the wavelet level $\ell$. Here, $(\textcolor{black}{\beta},k)=(8,48)$ in Example 1 and $\epsilon_1:=1/64$ in Example 4. The errors are displayed in log-10 scale.}}}
	\label{fig:l}
\end{figure}

\section{Conclusions}\label{sec:conclusion}
We develop a novel Wavelet-based Edge Multiscale Finite Element Method (WEMsFEM) to solve singularly perturbed convection-diffusion problems. The key feature of this approach is to introduce an edge multiscale ansatz space which has guaranteed approximation properties both inside and outside of the layers. The construction of this edge multiscale ansatz space is motivated by the local and global splitting of the solution, and their basis functions are solved locally using hierarchical bases as the Dirichlet data. The convergence of WEMsFEM with respect to the level parameter in the hierarchical bases is rigorously justified, which is confirmed numerically for a wide range of 2-d or 3-d convection-dominated diffusion problems with large P\'{e}clet number, including high oscillation and boundary layers.

\textcolor{black}{Note that the construction of the edge multiscale ansatz space involves solving local problems, and standard stabilized methods such as SUPG on regular quasi-uniform mesh are utilized for the sake of simplicity. Consequently, our proposed WEMsFEM can not handle the case with extremely large P\'{e}clet number if no additional mesh or multiscale ansatz space adaptivivity is developed. We will investigate in the future on the coarse grid adaptivity and local multiscale space adaptivity to handle such challenging issues.}

Moreover, we only consider an incompressible velocity in this paper. If the velocity ${\bm b}\in L^{\infty}(D;\mathbb{R}^d)$ is compressible, then the bilinear form $a(\cdot,\cdot): V\times V\to \mathbb{R}$ is possibly no longer coercive in the case of large P\'{e}clet number we are interested in. Under such scenario, we have to replace the Galerkin orthogonality with Schatz argument to derive the quasi-optimality established in Theorem 5.2. We expect our proposed method should also work in such case, and rigorous derivation will be carried out in the future. 
\appendix
\section{Very-weak solutions to singularly perturbed convection-diffusion problems}
\label{sec:appendix}
In this appendix, we derive an $L^2(\omega_i)$-estimate for boundary valued singularly perturbed convection-diffusion problem \noteLg{with $\omega_i\subset\mathbb{R}^{d}$ being a bounded open domain with a Lipschitz boundary for $d=2,3$}, which plays a crucial role in the error analysis.
Let $\omega_i$ be a coarse neighborhood for any
$i=1,\cdots,N$. For any $g\in L^2(\partial\omega_i\backslash\partial D)$, we define the following convection diffusion problem
\begin{equation}\label{eq:pde-very}
\left\{\begin{aligned}
\mathcal{L}_i v:=-\epsilon\Delta v+\bm{b}\cdot\nabla v&=0 && \text{ in } \omega_i,\\
v&=g &&\text{ on }\partial \omega_i\backslash\partial D\\
v&=0 &&\text{ on }\partial \omega_i\cap\partial D.
\end{aligned}\right.
\end{equation}
Our goal is to derive the $L^2(\omega_i)$-estimate of the solution $v$. To this end, we employ a nonstandard variational form in the spirit
of the transposition method \cite{MR0350177}. Denote the dual operator of $\mathcal{L}_i$ as $\mathcal{L}_i^*$. Since $\nabla\cdot \bm{b}=0$, we obtain $\mathcal{L}_i^*=-\epsilon\Delta-\bm{b}\cdot\nabla$. Let the test space $X(\omega_i)\subset H^1_{0}(\omega_i)$ be defined by
\begin{align}\label{eq:test-space}
X(\omega_i):=\{z:\mathcal{L}_i^*z\in L^2(\omega_i)\text{ and } z\in H^1_{0}(\omega_i)\}.
\end{align}
This test space $X(\omega_i)$ is endowed with the norm $\|\cdot\|_{X(\omega_i)}$:
\[
\forall z\in X(\omega_i):\|z\|_{X(\omega_i)}^2=\int_{\omega_i}|\nabla z|^2\dx+\int_{\omega_i}|\mathcal{L}_i^*z|^2\dx.
\]
Then we propose the following weak formulation corresponding to Problem \eqref{eq:pde-very}:
seeking $v\in L^2(\omega_i)$ such that
\begin{align}\label{eq:nonstd-variational}
\int_{\omega_i}v\mathcal{L}_i^{*}( z)\dx=-\epsilon\int_{\partial \omega_i\backslash\partial D}g\frac{\partial z}{\partial n}\mathrm{d}s \quad\text{ for all }z\in X(\omega_i).
\end{align}
The remaining of this section is devoted to proving the well posedness of the nonstandard variational formulation \eqref{eq:nonstd-variational} and deriving the {\it a priori} error estimate. Our final result is the following,
\begin{theorem}\label{lem:very-weak}
Given $g\in L^2(\partial\omega_i\backslash\partial D)$. Let $v$ be the solution to \noteLg{\eqref{eq:nonstd-variational}}, then there exists a constant ${\rm C}_{{\rm weak}}$ independent of the parameter $\epsilon$ such that
\[
\normL{v}{\omega_i}\leq {\rm C}_{{\rm weak}}\textcolor{black}{(\PeH^{-1/2}+\PeH^{1/2})H^{1/2}}\|g\|_{L^2(\partial \omega_i\backslash\partial D)}.
\]
\end{theorem}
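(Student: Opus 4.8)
The plan is to establish the well-posedness of the transposition formulation \eqref{eq:nonstd-variational} and the quantitative bound simultaneously, by exploiting that the adjoint operator $\mathcal{L}_i^*$ is an isomorphism from $X(\omega_i)$ onto $L^2(\omega_i)$. First I would observe that for every $w\in L^2(\omega_i)$ the adjoint problem $\mathcal{L}_i^* z=w$ in $\omega_i$ with $z=0$ on $\partial\omega_i$ has a unique weak solution $z\in H^1_0(\omega_i)$: the associated form $a^*(z,\phi)=\epsilon\int_{\omega_i}\nabla z\cdot\nabla\phi\,\mathrm{d}x-\int_{\omega_i}(\bm{b}\cdot\nabla z)\phi\,\mathrm{d}x$ is coercive because $\nabla\cdot\bm{b}=0$ forces $a^*(z,z)=\epsilon\normL{\nabla z}{\omega_i}^2$, exactly as in \eqref{eqn:v-elliptic}. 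Since then $\mathcal{L}_i^* z=w\in L^2(\omega_i)$, such $z$ lies in $X(\omega_i)$, so $\mathcal{L}_i^*\colon X(\omega_i)\to L^2(\omega_i)$ is a bijection. Testing \eqref{eq:nonstd-variational} with $z=(\mathcal{L}_i^*)^{-1}w$ rewrites the problem as $\int_{\omega_i}vw\,\mathrm{d}x=-\epsilon\int_{\partial\omega_i\backslash\partial D}g\,\partial_n z\,\mathrm{d}s$ for all $w\in L^2(\omega_i)$. Hence $v$ exists and is unique by the Riesz representation theorem as soon as this right-hand side is a bounded functional of $w$, and $\normL{v}{\omega_i}=\sup_{w\neq 0}|\epsilon\int_{\partial\omega_i\backslash\partial D}g\,\partial_n z\,\mathrm{d}s|/\normL{w}{\omega_i}$. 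By Cauchy--Schwarz this reduces the entire theorem to a single trace estimate for the adjoint solution, namely controlling $\epsilon\,\|\partial_n z\|_{L^2(\partial\omega_i\backslash\partial D)}$ by $\normL{w}{\omega_i}$ with the correct powers of $H$ and $\epsilon$.

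Two ingredients feed that trace estimate. The energy estimate, obtained by testing the adjoint problem with $z$ itself, using the vanishing of the convection term and the Poincar\'e inequality on a domain of diameter $\mathcal{O}(H)$, yields $\normL{\nabla z}{\omega_i}\lesssim (H/\epsilon)\normL{w}{\omega_i}$. The decisive ingredient is a Rellich--Pohozaev multiplier identity: I would test the adjoint equation with $\bm{m}\cdot\nabla z$, where $\bm{m}=x-x_0$ and $x_0$ is the center of the star-shaped cube $\omega_i$. Integrating the diffusion term by parts and using that $\nabla z=(\partial_n z)\,\bm{n}$ on $\partial\omega_i$ since $z$ vanishes there, the boundary contribution is $\tfrac{\epsilon}{2}\int_{\partial\omega_i}(\bm{m}\cdot\bm{n})|\partial_n z|^2\,\mathrm{d}s$, while the interior, convection and source contributions are absorbed into $\epsilon\normL{\nabla z}{\omega_i}^2$, $H\|\bm{b}\|_{L^\infty(\omega_i)}\normL{\nabla z}{\omega_i}^2$ and $H\normL{w}{\omega_i}\normL{\nabla z}{\omega_i}$.

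Because $\omega_i$ is a cube, $\bm{m}\cdot\bm{n}\gtrsim H$ on $\partial\omega_i\backslash\partial D$ while $\bm{m}\cdot\bm{n}\geq 0$ on all of $\partial\omega_i$, so the boundary term controls $\epsilon H\|\partial_n z\|^2_{L^2(\partial\omega_i\backslash\partial D)}$ from below. Substituting the energy estimate gives $\|\partial_n z\|^2_{L^2(\partial\omega_i\backslash\partial D)}\lesssim (H/\epsilon^2+H^2\|\bm{b}\|_{L^\infty(\omega_i)}/\epsilon^3)\normL{w}{\omega_i}^2$. Assembling the pieces, this shows $\epsilon\|\partial_n z\|_{L^2(\partial\omega_i\backslash\partial D)}\lesssim (H+H^2\|\bm{b}\|_{L^\infty(\omega_i)}/\epsilon)^{1/2}\normL{w}{\omega_i}=H^{1/2}(1+\PeH)^{1/2}\normL{w}{\omega_i}$, whence $\normL{v}{\omega_i}\lesssim H^{1/2}(1+\PeH)^{1/2}\|g\|_{L^2(\partial\omega_i\backslash\partial D)}$. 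Since $(1+\PeH)^{1/2}\leq \PeH^{-1/2}+\PeH^{1/2}$, this is precisely the claimed estimate with $\Cw$ absorbing the geometric constants $C_\infty$, $\Cov$ and the star-shapedness constant.

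I expect the Rellich identity to be the main obstacle. Its rigorous use requires $H^2$-regularity of $z$ so that $\partial_n z\in L^2(\partial\omega_i)$ and every boundary integral is meaningful; I would justify this by the convexity of the cube $\omega_i$, or alternatively by a density/approximation argument in $X(\omega_i)$ followed by passage to the limit. The geometry-dependent lower bound $\bm{m}\cdot\bm{n}\gtrsim H$ must also be verified for the coarse neighborhood, with particular care on the portion of $\partial\omega_i$ meeting $\partial D$, where only the nonnegativity $\bm{m}\cdot\bm{n}\geq 0$ is available; this is exactly why the final trace bound is taken over $\partial\omega_i\backslash\partial D$ while the full-boundary positive term is retained to dominate it.
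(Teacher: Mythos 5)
Your proposal is correct and, at the structural level, matches the paper: both arguments treat \eqref{eq:nonstd-variational} by transposition/duality, reducing $\normL{v}{\omega_i}$ to the operator norm of the functional $w\mapsto-\epsilon\int_{\partial\omega_i\backslash\partial D}g\,\frac{\partial z(w)}{\partial n}\,\mathrm{d}s$ with $z(w)$ the adjoint solution of \eqref{eq:pde-dual}, and both begin with the same Poincar\'e-based energy bound $\normL{\nabla z}{\omega_i}\lesssim(H/\epsilon)\normL{w}{\omega_i}$. The genuine difference lies in how the normal-trace estimate for $z$ is obtained. The paper's \cref{thm:pw-Regularity} derives a full $H^2$ bound: it estimates $\normL{\Delta z}{\omega_i}$ from the PDE, invokes the Miranda--Talenti inequality $|z|_{H^2(\omega_i)}\leq\normL{\Delta z}{\omega_i}$ on the convex neighborhood, and then applies Grisvard's scaled trace inequality with a free parameter optimized at $\delta=\PeH^{-1}$. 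You instead test the adjoint equation with the Rellich--Pohozaev multiplier $\bm{m}\cdot\nabla z$, $\bm{m}=x-x_0$: the identity isolates the boundary term $\frac{\epsilon}{2}\int_{\partial\omega_i}(\bm{m}\cdot\bm{n})\left|\frac{\partial z}{\partial n}\right|^2\mathrm{d}s$ with the favorable sign, the box geometry gives $\bm{m}\cdot\bm{n}\gtrsim H$, and the remaining volume terms are absorbed by the energy estimate; your arithmetic checks out. Your route is more elementary and self-contained, in that $H^2$ regularity (from convexity) is needed only qualitatively to justify the identity, never quantitatively, and no scaled trace theorem or parameter optimization is required; it even produces the slightly sharper factor $(1+\PeH)^{1/2}\leq\PeH^{-1/2}+\PeH^{1/2}$, from which the stated bound follows. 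What the paper's route buys is robustness to geometry: it needs only a convex Lipschitz neighborhood with the trace constant tracked through the $H$-scaling, whereas your multiplier argument additionally relies on the uniform lower bound $\bm{m}\cdot\bm{n}\geq cH$ (star-shapedness with respect to $x_0$ at scale $H$) --- true for the interior, face, edge and corner coarse neighborhoods of the rectangular meshes used here, but this geometric verification should be stated explicitly in a final write-up.
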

To prove \cref{lem:very-weak}, one has to first derive the $L^2(\partial\omega_i)$-estimate of the normal trace $\frac{\partial z}{\partial n}$ for any $z\in X(\omega_i)$. This is established in the following theorem:

\begin{theorem}\label{thm:pw-Regularity}
Let $w\in L^2(\omega_i)$ and let $z\in X(\omega_i)$ satisfy
\begin{equation}\label{eq:pde-dual}
\left\{\begin{aligned}
\mathcal{L}_i^* (z):=-\epsilon\Delta z-\bm{b}\cdot\nabla z&=w && \text{ in } \omega_i,\\
z&=0 &&\text{ on }\partial \omega_i.
\end{aligned}\right.
\end{equation}
Then for some constant ${\rm C}_{{\rm weak}} $ independent of the parameter $\epsilon$ and $H$, there holds
\begin{align*}
\|\frac{\partial z}{\partial n}\|_{L^2(\partial\omega_i)}
&\leq {\rm C}_{{\rm weak}}\textcolor{black}{(1+\PeH)}\left(\|\bm{b}\|_{L^{\infty}(D)}\epsilon\right)^{-1/2}\normL{w}{\omega_i}.
\end{align*}
This constant ${\rm C}_{{\rm weak}}$ can change value from context to context.
\end{theorem}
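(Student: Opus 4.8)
The plan is to control the normal trace by the classical Rellich--Pohozaev (Rellich--Ne\v{c}as) multiplier technique, testing the dual equation \eqref{eq:pde-dual} against $\bm{m}\cdot\nabla z$ for the radial vector field $\bm{m}(x):=x-x_c$, where $x_c$ is the center of $\omega_i$. Because each coarse neighborhood is (a union of elements forming) a convex rectangle/box, this field is transversal to the boundary, with $\bm{m}\cdot\bm{n}\gtrsim H$ on $\partial\omega_i$ and $\normI{\bm{m}}{\omega_i}\lesssim H$; these two geometric facts are precisely what convert a weighted boundary integral into a clean bound on $\normL{\frac{\partial z}{\partial n}}{\partial\omega_i}$.

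First I would record the energy estimate. Testing \eqref{eq:pde-dual} with $z$ and using $\nabla\cdot\bm{b}=0$ together with $z|_{\partial\omega_i}=0$ makes the convection term vanish, so $\epsilon\normL{\nabla z}{\omega_i}^2=\int_{\omega_i}wz\dx$; the Poincar\'e inequality $\normL{z}{\omega_i}\lesssim H\normL{\nabla z}{\omega_i}$ (valid since $z$ vanishes on $\partial\omega_i$ and $\mathrm{diam}\,\omega_i\sim H$) then yields $\normL{\nabla z}{\omega_i}\lesssim \tfrac{H}{\epsilon}\normL{w}{\omega_i}$ and $\normL{z}{\omega_i}\lesssim \tfrac{H^2}{\epsilon}\normL{w}{\omega_i}$. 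Next comes the Rellich identity: multiplying $\mathcal{L}_i^*z=w$ by $\bm{m}\cdot\nabla z$ and integrating by parts on the diffusion term, and using that $\nabla z=(\tfrac{\partial z}{\partial n})\bm{n}$ on $\partial\omega_i$ (the tangential gradient vanishes there), produces
\[
\frac{\epsilon}{2}\int_{\partial\omega_i}(\bm{m}\cdot\bm{n})\Big|\frac{\partial z}{\partial n}\Big|^2\mathrm{d}s
=-\epsilon\frac{d-2}{2}\normL{\nabla z}{\omega_i}^2-\int_{\omega_i}w\,(\bm{m}\cdot\nabla z)\dx-\int_{\omega_i}(\bm{b}\cdot\nabla z)(\bm{m}\cdot\nabla z)\dx .
\]
For $d=2$ the first right-hand term drops, and for $d=3$ it is nonpositive, so in either case it may be discarded from the upper bound.

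To finish, I would use $\bm{m}\cdot\bm{n}\gtrsim H$ on the left and Cauchy--Schwarz with $\normI{\bm{m}}{\omega_i}\lesssim H$ on the right to get $\epsilon H\normL{\frac{\partial z}{\partial n}}{\partial\omega_i}^2\lesssim H\normL{w}{\omega_i}\normL{\nabla z}{\omega_i}+H\normI{\bm{b}}{D}\normL{\nabla z}{\omega_i}^2$. Substituting the two energy bounds, dividing by $H$, and rewriting $\tfrac{H}{\epsilon^2}$ and $\normI{\bm{b}}{D}\tfrac{H^2}{\epsilon^3}$ through $\PeH=H\normI{\bm{b}}{D}/\epsilon$ turns the right-hand side into $(\PeH+\PeH^2)(\normI{\bm{b}}{D}\epsilon)^{-1}\normL{w}{\omega_i}^2$; the crude bound $\PeH(1+\PeH)\le(1+\PeH)^2$ then delivers the stated estimate with $\Cw$ absorbing the geometric constants (the transversality constant, the Poincar\'e constant, and so on).

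The hard part is the rigorous justification of the Rellich identity rather than its formal derivation: one needs $\frac{\partial z}{\partial n}\in L^2(\partial\omega_i)$ and the divergence-theorem manipulations to be licit, which requires $z\in H^2(\omega_i)$ (or a density argument approximating $z$ by smoother functions). This is exactly where convexity of the coarse neighborhoods enters, since $H^2$ elliptic regularity for $\mathcal{L}_i^*z=w\in L^2$ holds on convex polygons/polyhedra (note $\Delta z=-\epsilon^{-1}(w+\bm{b}\cdot\nabla z)\in L^2$), and it also ensures that the piecewise-smooth boundary $\partial\omega_i$ contributes no spurious corner or edge terms in the integration by parts. A secondary, purely bookkeeping point is tracking the powers of $H$, $\epsilon$, and $\PeH$ carefully so that the final constant is genuinely independent of $\epsilon$ and $H$.
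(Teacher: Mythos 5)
Your proof is correct, but it takes a genuinely different route from the paper's. The paper proceeds in a modular fashion: after the same energy estimate $\normL{\nabla z}{\omega_i}\lesssim \frac{H}{\epsilon}\normL{w}{\omega_i}$, it bounds $\normL{\Delta z}{\omega_i}\lesssim \epsilon^{-1}(1+\PeH)\normL{w}{\omega_i}$ directly from the PDE, invokes the Miranda--Talenti inequality $|z|_{H^2(\omega_i)}\leq \normL{\Delta z}{\omega_i}$ on the convex neighborhood, and then applies a quantitative (scaled) trace theorem of Grisvard type, $\|\frac{\partial z}{\partial n}\|_{L^2(\partial\omega_i)}\lesssim \delta^{1/2}H^{1/2}|z|_{H^2(\omega_i)}+\delta^{-1/2}H^{-1/2}\normL{\nabla z}{\omega_i}$, optimizing the free parameter at $\delta:=\PeH^{-1}$ to balance the two contributions. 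Your Rellich--Pohozaev multiplier argument collapses these last three steps into a single integration-by-parts identity: testing with $\bm{m}\cdot\nabla z$ produces the boundary term with the correct weight $\epsilon(\bm{m}\cdot\bm{n})\sim\epsilon H$ automatically, so no parameter tuning is needed, and the convection term is absorbed by Cauchy--Schwarz together with the energy bound; your bookkeeping $(\PeH+\PeH^2)(\normI{\bm{b}}{D}\epsilon)^{-1}\le(1+\PeH)^2(\normI{\bm{b}}{D}\epsilon)^{-1}$ reproduces exactly the stated constant. The hypotheses consumed are essentially the same in both routes --- convexity (plus shape regularity) of $\omega_i$ is needed by the paper for Miranda--Talenti and by you both for the $H^2$ regularity that legitimizes the identity and for the star-shapedness bound $\bm{m}\cdot\bm{n}\gtrsim H$; this is not an extra restriction, and your observation that $\Delta z=-\epsilon^{-1}(w+\bm{b}\cdot\nabla z)\in L^2(\omega_i)$ correctly supplies the regularity. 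What each approach buys: the paper's is black-box and citable (two standard inequalities glued together), whereas yours is self-contained and, since Miranda--Talenti is itself classically proved via a Rellich-type identity, arguably works one level closer to the root of the matter; it also makes transparent why the weight $\epsilon H$ on $\|\frac{\partial z}{\partial n}\|^2_{L^2(\partial\omega_i)}$ is the natural quantity. One point worth stating explicitly if you write this up: for boundary neighborhoods ($O_i\in\partial D$) the set $\omega_i$ is a smaller box and $x_c$ should be its centroid rather than the node $O_i$, so that $\bm{m}\cdot\bm{n}\gtrsim H$ still holds uniformly.
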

\begin{proof}

Note that $z\in X(\omega_i)$ is the unique solution to the following weak formulation
\begin{align*}
\forall q\in H^1_{0}(\omega_i): \int_{\omega_i}\mathcal{L}_i^{*}(z) q\;\dx=\int_{\omega_i}wq\dx.
\end{align*}
Taking $q:=z$ and applying integration by parts, we arrive at
\begin{align*}
\epsilon\normL{\nabla z}{\omega_i}^2=\int_{\omega_i}wz\dx.
\end{align*}
Since $z|_{\partial \omega_i}=0$, then an application of the Poincar\'{e} inequality leads to
\begin{align}\label{est:1}
\normL{\nabla z}{\omega_i}\lesssim \noteLi{\frac{H}{\epsilon}}\normL{w}{\omega_i}.
\end{align}
Recall that the mesh P\'{e}clet number $\PeH$ of the coarse mesh $\mc{T}^H$ is
$\PeH:=H\|b\|_{L^{\infty}(D)}/\epsilon$. Consequently, plugging this estimate into \eqref{eq:pde-dual} results in
\begin{align*}
\normL{\Delta z}{\omega_i}\lesssim \epsilon^{-1}(1+\PeH)\normL{w}{\omega_i}.
\end{align*}

By the so-called Miranda-Talenti estimate on a convex domain \cite{MR0775683}, we can obtain the following {\it a priori} estimate
\begin{align}\label{est-delta}
\noteLg{|{z}|_{H^2(\omega_i)}}&\leq \normL{\Delta z}{\omega_i}
\lesssim  \epsilon^{-1}(1+\PeH)\normL{w}{\omega_i} .
\end{align}
\noteLg{
Note that 
\begin{align*}
\frac{\partial z}{\partial n}=\nabla z\cdot n.
\end{align*}
Next, we invoke a quantitative trace theorem on a bounded open domain with a Lipschitz boundary \cite[Theorem 1.5.1.10]{MR0775683} to obtain an estimate of $\left\|\frac{\partial z}{\partial n}\right\|_{L^2(\partial\omega_i)}$, which states, 
\begin{align*}
\left\|\frac{\partial z}{\partial n}\right\|_{L^2(\partial\omega_i)}&\lesssim 
 \delta^{1/2}H^{1/2}|{z}|_{H^2(\omega_i)}+
\delta^{-1/2}H^{-1/2}\|\nabla z\|_{L^{2}(\omega_i)} \text{ for any }\delta\in (0,1)
\end{align*}
with the hidden constant independent of the size of the domain $\omega_i$. 

Since we are interested in the case with $\epsilon\ll H$, then taking $\delta:=\PeH^{-1}$ and combining with \eqref{est:1} and \eqref{est-delta}, we derive the desired assertion.
}
\end{proof}
Thanks to this {\it a priori} estimate presented in Theorem \ref{thm:pw-Regularity}, we are ready to state the well posedness of the nonstandard variational formulation, cf. \eqref{eq:nonstd-variational},
\begin{lemma}
Let $v$ be the solution to problem \noteLg{\eqref{eq:nonstd-variational}} and let the test space $X(\omega_i)$ be defined in \eqref{eq:test-space}. Then the nonstandard variational form \eqref{eq:nonstd-variational} is well posed.
\end{lemma}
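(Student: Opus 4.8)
The plan is to recast the nonstandard formulation \eqref{eq:nonstd-variational} as a Banach--Ne\v{c}as--Babu\v{s}ka (equivalently, transposition) problem with trial space $L^2(\omega_i)$ and test space $X(\omega_i)$, and then to verify the associated inf-sup conditions. To this end I would introduce the bilinear form $b(v,z):=\int_{\omega_i}v\,\mathcal{L}_i^*z\dx$ on $L^2(\omega_i)\times X(\omega_i)$ and the linear functional $F(z):=-\epsilon\int_{\partial\omega_i\backslash\partial D}g\,\frac{\partial z}{\partial n}\mathrm{d}s$, so that \eqref{eq:nonstd-variational} reads $b(v,z)=F(z)$ for all $z\in X(\omega_i)$. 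A preliminary observation is that $L^2(\omega_i)$ is a Hilbert space and $X(\omega_i)$, carrying the graph-type norm $\|z\|_{X(\omega_i)}^2=\normL{\nabla z}{\omega_i}^2+\normL{\mathcal{L}_i^*z}{\omega_i}^2$ of the closed operator $\mathcal{L}_i^*$ restricted to $H^1_0(\omega_i)$, is complete; both are therefore admissible for the Banach--Ne\v{c}as--Babu\v{s}ka framework.

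First I would record the elementary continuity bound $|b(v,z)|\le\normL{v}{\omega_i}\normL{\mathcal{L}_i^*z}{\omega_i}\le\normL{v}{\omega_i}\|z\|_{X(\omega_i)}$. The core of the argument is the inf-sup (surjectivity) condition: for each $v\in L^2(\omega_i)$ I would solve the dual problem \eqref{eq:pde-dual} with right-hand side $w:=v$, whose unique solution $z_v\in X(\omega_i)$ exists by the $H^1_0$-coercivity of $\mathcal{L}_i^*$ arising from $\nabla\cdot\bm{b}=0$ (this is exactly the weak problem used to derive \eqref{est:1}). Then $b(v,z_v)=\normL{v}{\omega_i}^2$, while estimate \eqref{est:1} together with the definition of the $X(\omega_i)$-norm yields $\|z_v\|_{X(\omega_i)}\lesssim(1+H/\epsilon)\normL{v}{\omega_i}$, so that $\sup_{z\in X(\omega_i)}b(v,z)/\|z\|_{X(\omega_i)}\gtrsim(1+H/\epsilon)^{-1}\normL{v}{\omega_i}$, giving the required inf-sup constant $\beta$. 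The non-degeneracy (injectivity-on-the-test-space) condition is immediate: if $b(v,z)=0$ for all $v\in L^2(\omega_i)$, then $\mathcal{L}_i^*z=0$, whence $z=0$ by uniqueness for \eqref{eq:pde-dual}.

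Finally I would verify that $F$ is a bounded linear functional on $X(\omega_i)$, which is the one step where the singularly perturbed convection-diffusion structure genuinely enters. Estimating $|F(z)|\le\epsilon\,\|g\|_{L^2(\partial\omega_i\backslash\partial D)}\,\|\partial z/\partial n\|_{L^2(\partial\omega_i)}$ and inserting the normal-trace bound of \cref{thm:pw-Regularity} (applied with $w=\mathcal{L}_i^*z$, and using $\normL{\mathcal{L}_i^*z}{\omega_i}\le\|z\|_{X(\omega_i)}$) gives $|F(z)|\lesssim\epsilon(1+\PeH)(\normI{\bm{b}}{D}\epsilon)^{-1/2}\|g\|_{L^2(\partial\omega_i\backslash\partial D)}\|z\|_{X(\omega_i)}$. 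With continuity of $b$, the two inf-sup conditions, and boundedness of $F$ in hand, the Banach--Ne\v{c}as--Babu\v{s}ka theorem delivers a unique $v\in L^2(\omega_i)$ solving \eqref{eq:nonstd-variational} together with the a priori bound $\normL{v}{\omega_i}\le\beta^{-1}\|F\|_{X(\omega_i)'}$, and the latter combined with the trace estimate reproduces the quantitative bound already recorded in \cref{lem:very-weak}. I expect the main obstacle to be precisely the boundedness of $F$: controlling $\partial z/\partial n$ in $L^2(\partial\omega_i)$ is \emph{not} available from the $X(\omega_i)$-norm through a naive trace inequality and relies crucially on the $H^2$/Miranda--Talenti regularity packaged into \cref{thm:pw-Regularity}; once that estimate is granted, the remaining steps are the routine transposition argument.
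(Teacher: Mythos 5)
Your proof is correct, and it rests on the same two pillars as the paper's — solvability of the dual problem \eqref{eq:pde-dual} and the normal-trace bound of \cref{thm:pw-Regularity} — but the functional-analytic packaging is genuinely different. The paper never verifies an inf-sup condition: it substitutes the test function $z=z(w)$, the dual solution with datum $w\in L^2(\omega_i)$, thereby transforming \eqref{eq:nonstd-variational} into the problem \eqref{eq:variation2} posed on $L^2(\omega_i)\times L^2(\omega_i)$ with the plain inner product $c(\cdot,\cdot)$ as the bilinear form; well-posedness is then immediate from Riesz representation once \cref{thm:pw-Regularity} shows the functional $r$ is bounded, cf.~\eqref{eq:b}. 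The bijectivity of $w\mapsto z(w)$ from $L^2(\omega_i)$ onto $X(\omega_i)$, which underlies the paper's equivalence of the two formulations, is exactly the structure your Banach--Ne\v{c}as--Babu\v{s}ka verification exploits when you pick $z_v$ as the supremizer, so the two arguments are close cousins; yours works directly on the given trial/test pair at the price of one extra ingredient, namely \eqref{est:1} to obtain $\|z_v\|_{X(\omega_i)}\lesssim(1+H/\epsilon)\normL{v}{\omega_i}$ and hence the inf-sup constant $\beta\gtrsim(1+H/\epsilon)^{-1}$. One caveat: the generic BNB stability bound $\normL{v}{\omega_i}\le\beta^{-1}\|F\|_{X(\omega_i)'}$ then carries the spurious factor $\beta^{-1}\simeq 1+H/\epsilon$, so contrary to your closing claim it does \emph{not} reproduce \cref{lem:very-weak} sharply. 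That loss is avoidable — and this is what the paper's proof of \cref{lem:very-weak} does with $w:=v$ — by testing with the particular dual solution $z_v$ and applying \cref{thm:pw-Regularity} to $w=v$ directly, i.e.\ $\normL{v}{\omega_i}^2=F(z_v)\le\epsilon\|g\|_{L^2(\partial\omega_i\backslash\partial D)}\|\partial z_v/\partial n\|_{L^2(\partial\omega_i)}$, rather than routing the trace bound through the $X(\omega_i)$-norm. For the well-posedness statement itself, however, your argument is complete and correct.
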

\begin{proof}
To prove the well-posedness of the nonstandard variational form \eqref{eq:nonstd-variational}, we introduce a bilinear form
$c(\cdot,\cdot)$ on $L^2(\omega_i)\times L^2(\omega_i)$ and a linear form $b(\cdot)$ on $L^2(\omega_i)$, defined by
\begin{align*}
c(w_1,w_2)&:=\int_{\omega_i}w_1 w_2\;\dx\quad\text{ for all } w_1, w_2\in L^2(\omega_i)\\
r(w)&:=-\epsilon\int_{\partial\omega_i\backslash\partial D}\noteLi{\frac{\partial z(w)}{\partial n}} \noteLi{g}\;\mathrm{d}s\quad
\text{ for all }w\in L^2(\omega_i),
\end{align*}
with \noteLi{$z(w)$} being the unique solution to \eqref{eq:pde-dual}.
It follows from Theorem \ref{thm:pw-Regularity} \textcolor{black}{and the definition of the mesh P\'{e}clet number $\PeH$} that
\begin{align}
\|r\|:=\sup_{w\in L^2(\omega_i)}\frac{r(w)}{\|w\|_{L^2(\omega_i)}}
&\leq {\rm C}_{{\rm weak}}\textcolor{black}{(1+\PeH)}\sqrt{\frac{\epsilon}{\|b\|_{L^{\infty}(D)}}}\normL{g}{\partial\omega_i\backslash\partial D}\nonumber\\
&\leq {\rm C}_{{\rm weak}}\textcolor{black}{(\PeH^{-1/2}+\PeH^{1/2})H^{1/2}}\normL{g}{\partial\omega_i\backslash\partial D}.\label{eq:b}
\end{align}
This yields well-posedness of the following variational problem: find $v\in L^2(\omega_i)$ such that
\begin{align}\label{eq:variation2}
c(v,w)=r(w)\quad\text{ for all }w\in L^2(\omega_i).
\end{align}
The equivalence of problems \eqref{eq:variation2} and \eqref{eq:nonstd-variational} implies the desired well-posedness of \eqref{eq:nonstd-variational}.
\end{proof}
Finally, we are ready to prove the main result stated in \cref{lem:very-weak}:
\begin{proof}[Proof of \cref{lem:very-weak}]
For all $w\in L^2(\omega_i)$, we obtain from \eqref{eq:b} and \eqref{eq:variation2}:
\begin{align*}
\int_{\omega_i} v w\;\dx:=c(v,w)&=r(w)\\
&\leq \text{C}_{\text{weak}}(\PeH^{-1/2}+\PeH^{1/2})H^{1/2}\normL{w}{\omega_i}\|g\|_{L^2(\partial\omega_i\backslash\partial D)}.
\end{align*}
Then this completes the proof by taking $w:=v$.
\end{proof}
Furthermore, we present the {\it a priori} estimate in energy norm to facilitate the proofs in \cref{sec:error}. This estimate is standard and is commonly known as the Cacciappoli inquality, which can be found in many literature, e.g., \cite{MR2801210,GL18}.
\begin{corollary}\label{corollary:very-weak}
Let $g\in L^2(\partial\omega_i\backslash\partial D)$ and let $v$ be the solution to \eqref{eq:nonstd-variational}. Then there exists a constant ${\rm C}_{{\rm weak}}$ independent of the parameter $\epsilon$ such that
\[
\normL{\chi_i\nabla v}{\omega_i}\leq {\rm C}_{{\rm weak}}\left(\textcolor{black}{\PeH}+\PeH^{-1/2}\right)H^{-1/2}\|g\|_{L^2(\partial \omega_i\backslash\partial D)}.
\]
\end{corollary}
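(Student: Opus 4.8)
The plan is to establish the energy bound by a weighted Caccioppoli (reverse-Poincar\'e) argument and then feed in the $L^2(\omega_i)$-estimate already proved in \cref{lem:very-weak}. Since $v$ solves the homogeneous equation $\mathcal{L}_i v=0$ in the interior of $\omega_i$, interior elliptic regularity guarantees $v\in H^1_{\mathrm{loc}}(\omega_i)$; because $\mathrm{supp}(\chi_i)$ is a compact subset of $\omega_i$, the weighted quantity $\normL{\chi_i\nabla v}{\omega_i}$ is well defined, and the test function $\chi_i^2 v$ lies in $H^1_0(\omega_i)$, so I may legitimately insert it into the interior weak form of $\mathcal{L}_i v=0$.

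First I would test $-\epsilon\Delta v+\bm{b}\cdot\nabla v=0$ against $\chi_i^2 v$. Integration by parts of the diffusion term produces a boundary integral over $\partial\omega_i$, which vanishes since $\chi_i\equiv 0$ in a neighborhood of $\partial\omega_i$, leaving
\[
\epsilon\int_{\omega_i}\chi_i^2|\nabla v|^2\dx
+2\epsilon\int_{\omega_i}\chi_i v\,\nabla\chi_i\cdot\nabla v\dx
+\int_{\omega_i}(\bm{b}\cdot\nabla v)\chi_i^2 v\dx=0.
\]
For the convection term I would use $\chi_i^2 v\,\bm{b}\cdot\nabla v=\tfrac12\chi_i^2\,\bm{b}\cdot\nabla(v^2)$ and integrate by parts once more; the boundary term again disappears, and incompressibility $\nabla\cdot\bm{b}=0$ collapses $\nabla\cdot(\chi_i^2\bm{b})$ to $2\chi_i\,\nabla\chi_i\cdot\bm{b}$, so that
\[
\int_{\omega_i}(\bm{b}\cdot\nabla v)\chi_i^2 v\dx
=-\int_{\omega_i}v^2\,\chi_i\,\nabla\chi_i\cdot\bm{b}\dx.
\]

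Next I would estimate the right-hand side. Cauchy--Schwarz on the cross term combined with the partition-of-unity bound $\|\nabla\chi_i\|_{L^\infty(\omega_i)}\le C_{\text{G}}H^{-1}$ and Young's inequality absorbs a factor $\tfrac{\epsilon}{2}\normL{\chi_i\nabla v}{\omega_i}^2$ into the left-hand side, while the convection remainder is controlled by $C_{\infty}C_{\text{G}}H^{-1}\normI{\bm{b}}{D}\normL{v}{\omega_i}^2$. Dividing by $\epsilon$ and recalling that $\normI{\bm{b}}{D}/\epsilon=\PeH/H$ yields
\[
\normL{\chi_i\nabla v}{\omega_i}^2\lesssim(1+\PeH)H^{-2}\normL{v}{\omega_i}^2,
\]
with a hidden constant depending only on $C_{\infty}$ and $C_{\text{G}}$, hence independent of $\epsilon$.

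Finally I would insert the $L^2(\omega_i)$-bound $\normL{v}{\omega_i}\lesssim(\PeH^{-1/2}+\PeH^{1/2})H^{1/2}\|g\|_{L^2(\partial\omega_i\backslash\partial D)}$ from \cref{lem:very-weak} and simplify the P\'eclet factors via $(1+\PeH)^{1/2}(\PeH^{-1/2}+\PeH^{1/2})\lesssim\PeH+\PeH^{-1/2}$, which reproduces exactly the claimed estimate. The only genuinely delicate point is the opening regularity justification---that $\nabla v$ makes sense on $\mathrm{supp}(\chi_i)$ and that both integrations by parts are valid for a merely $L^2$ very-weak solution; once interior regularity is invoked and the compact support of $\chi_i$ annihilates every boundary contribution, the remainder is the routine Caccioppoli algebra.
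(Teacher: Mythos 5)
Your proof takes essentially the same route as the paper's: multiply the equation by $\chi_i^2 v$, integrate by parts (the boundary terms vanishing since $\chi_i=0$ on $\partial\omega_i\backslash\partial D$ and $v=0$ on $\partial\omega_i\cap\partial D$), use incompressibility and Young's inequality to reach $\normL{\chi_i\nabla v}{\omega_i}^2\lesssim (1+\PeH)H^{-2}\normL{v}{\omega_i}^2$, and then insert the $L^2$-bound of \cref{lem:very-weak}, exactly as the paper does. One minor caveat on your opening justification: for the bilinear hat functions the paper actually uses, $\mathrm{supp}(\chi_i)=\overline{\omega_i}$ is \emph{not} a compact subset of $\omega_i$ and $\chi_i$ does not vanish in a neighborhood of $\partial\omega_i$, only on $\partial\omega_i$ itself --- but your argument (like the paper's) only needs this weaker fact to kill the boundary integrals, so the conclusion is unaffected.
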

\begin{proof}
Recall that $\{\chi_i\}_{i=1}^N$ is a $(\Cov,C_{\infty},C_{\text{G}})$ partition of unity subordinate to the cover $\{\omega_i\}_{i=1}^N$, which implies that $\chi_i=0$ on $\partial\omega_i\backslash\partial D$. Multiplying \eqref{eq:pde-very} by $\chi_i^2v$ and applying integration by parts, we arrive at
\begin{align*}
\epsilon\int_{\omega_i}\chi_i^2|\nabla v|^2\dx=-2\epsilon \int_{\omega_i}\nabla v\cdot \nabla\chi_i\chi_i v\dx
+\int_{\omega_i}\bm{b}\cdot\nabla\chi_i\chi_i v^2\dx.
\end{align*}
Then an application of the Young's inequality leads to
\begin{align*}
\int_{\omega_i}\chi_i^2|\nabla v|^2\dx
\lesssim \left(H^{-2}+\frac{\|\bm{b}\|_{L^{\infty}(D)}}{\epsilon H}\right)\|v\|_{L^2(\omega_i)}^2
.
\end{align*}
Combining with \cref{lem:very-weak}, we obtain 
\begin{align*}
\int_{\omega_i}\chi_i^2|\nabla v|^2\dx
&\lesssim \textcolor{black}{(1+\PeH)^2}\left(H^{-2}+\frac{\|\bm{b}\|_{L^{\infty}(D)}}{\epsilon H}\right)
\frac{\epsilon}{\|\bm{b}\|_{L^{\infty}(D)}}
\|g\|_{L^2(\partial\omega_i\backslash\partial D)}^2\\
&\lesssim H^{-1}\textcolor{black}{\left(\PeH^2+\PeH^{-1}\right)}
\|g\|_{L^2(\partial\omega_i\backslash\partial D)}^2
.
\end{align*}
Then the desired assertion follows after taking the square root, and this completes the proof.
\end{proof}

\section*{Acknowledgments}
We thank the anonymous referees for very detailed and constructive comments.

\bibliographystyle{siamplain}
\bibliography{references}
\end{document}